\newtheorem{thm}[equation]{Theorem}
\newtheorem{prop}[equation]{Proposition}
\newtheorem{cor}[equation]{Corollary}
\newtheorem{rem}[equation]{Remark}
\newtheorem{examp}[equation]{Example}
\theoremstyle{remark}	  
\theoremstyle{definition} 
\newtheoremstyle{efronremark}
{6pt}{6pt}{}{}{\itshape}{\quad}{ }{\thmnote{#3}}
\theoremstyle{efronremark}   \newtheorem*{eremark}{}
\numberwithin{equation}{section}
\title{Gelfand Models for Diagram Algebras}
\author{Tom Halverson\footnote{Supported in part by NSF Grant DMS-0800085.}
 \\  Department of Mathematics \\  Macalester College \\   St. Paul, MN 55105\\
 \texttt{halverson@macalester.edu}
\and
Mike Reeks\footnote{Supported in part by the Macalester College Anderson-Grossheusch  Summer Research Fund.} \\  Department of Mathematics \\   Macalester College \\   St. Paul, MN 55105\\
\texttt{mreeks@macalester.edu}}
\DeclareMathOperator{\End}{End}
\DeclareMathOperator{\Ind}{Ind}
\DeclareMathOperator{\sign}{\mathsf{sign}}
\DeclareMathOperator{\odd}{\mathrm{odd}}
\newcommand{\tldim}[2]{\left\{\begin{array}{c}{#1} \\ {#2} \end{array} \right\} }
\date{May 20, 2014}
\begin{document}

\newcommand{\s}{\mathsf{s}}
\renewcommand{\r}{\mathsf{r}}
\newcommand{\rb}{\mathsf{rb}}
\newcommand{\tl}{\mathsf{tl}}
\newcommand{\m}{\mathsf{m}}
\newcommand{\pr}{\mathsf{pr}}

\newcommand{\pn}{\mathsf{rank}}
\newcommand{\sgn}{S}
\newcommand{\KV}{\text{KV}}
\newcommand{\CC}{\mathbb{K}}
\newcommand{\ZZ}{\mathbb{Z}}
\newcommand{\V}{\mathsf{V}}
\newcommand{\G}{\mathsf{G}}
\newcommand{\A}{\mathsf{A}}

\newcommand{\U}{\mathsf{U}}
\newcommand{\vv}{\mathsf{v}}
\newcommand{\w}{\mathsf{w}}
\newcommand{\e}{\mathsf{e}}
\newcommand{\p}{\mathsf{p}}
\newcommand{\q}{\mathsf{q}}
\renewcommand{\b}{\mathsf{b}}
\newcommand{\f}{\mathsf{f}}
\newcommand{\Z}{\mathsf{Z}}

\newcommand{\M}{\mathsf{M}}
\newcommand{\W}{\mathsf{M}}
\newcommand{\Motz}{\mathsf{M}}

\newcommand{\RB}{\mathsf{RB}}
\newcommand{\RBc}{\mathcal{R\! B}}
\newcommand{\GL}{\mathsf{GL}}
\newcommand{\TL}{\mathsf{TL}}
\newcommand{\T}{\mathsf{T}}
\newcommand{\J}{\mathsf{J}}
\renewcommand{\P}{\mathsf{P}}
\newcommand{\planarP}{\mathsf{PP}}
\newcommand{\PR}{\mathsf{PR}}
\newcommand{\B}{\mathsf{B}}
\newcommand{\R}{\mathsf{R}}
\renewcommand{\H}{\mathsf{H}}

\renewcommand{\AA}{\mathcal{A}}
\newcommand{\PP}{\mathcal{P}}
\newcommand{\DD}{\mathcal{D}}
\newcommand{\BB}{\mathcal{B}}
\newcommand{\RRB}{\mathcal{RB}}
\newcommand{\RR}{\mathcal{R}}
\newcommand{\PPR}{\mathcal{P\!R}}
\newcommand{\TTL}{\mathcal{T\!L}}
\newcommand{\MM}{\mathcal{M}}

\renewcommand{\S}{\mathsf{S}}
\renewcommand{\Im}{\mathrm{Im}}
\newcommand{\rank}{\mathrm{rank}}
\newcommand{\mult}{\mathrm{mult}}
\newcommand{\Card}{\mathrm{Card}}
\def\ot{\otimes}
\def\id{{\rm id}}

\newcommand{\C}{\mathsf{C}}
\newcommand{\D}{\mathsf{D}}
\renewcommand{\T}{\mathsf{T}}
\renewcommand{\O}{\mathsf{O}}
\newcommand{\I}{\mathsf{I}}
\maketitle

\begin{abstract}
\noindent
A Gelfand model for a semisimple algebra $\mathsf{A}$ over an algebraically closed field $\mathbb{K}$ is a linear representation that contains each irreducible representation of $\mathsf{A}$ with multiplicity exactly one.  We give a method of constructing these models that works uniformly for a large class of semisimple, combinatorial diagram algebras including the partition,  Brauer,  rook monoid,  rook-Brauer,   Temperley-Lieb,  Motzkin, and  planar rook monoid algebras.  In each case, the model representation is given by diagrams acting via ``signed conjugation" on the linear span of their horizontally symmetric diagrams. This representation is a generalization of the Saxl model for  the symmetric group. Our method is to use the Jones basic construction to lift the Saxl model from the symmetric group to each diagram algebra.  In the case of the planar diagram algebras, our construction exactly produces the irreducible representations of the algebra.  \end{abstract}

\begin{eremark}[Keywords:]  Gelfand model; multiplicity-free representation;  symmetric group;  partition algebra;  Brauer algebra; Temperley-Lieb algebra; Motzkin algebra;  rook monoid.
\end{eremark}

\section*{Introduction}

A famous consequence of Robinson-Schensted-Knuth (RSK) insertion is that the set of standard Young tableaux with $k$ boxes is in bijection with the set of involutions in the symmetric group $\S_k$ (the permutations $\sigma \in \S_k$ with $\sigma^2 = 1$).  Since the standard Young tableaux index the bases for the irreducible $\S_k$ modules,  it follows that the sum of the dimensions of the irreducible $\S_k$ modules equals the number of involutions in $\S_k$. This suggests the possibility of a representation of the symmetric group on the linear span of its involutions which decomposes into irreducible $\S_k$ modules each with multiplicity  1.   Saxl  \cite{Sxl}  and  Klja\v{c}ko \cite{Klj} have constructed such a module under which the symmetric group acts on its involutions by a twisted, or signed, conjugation \eqref{SaxlSign}.   A combinatorial construction of an analogous module  was studied recently by Adin, Postnikov, and Roichman \cite{APR} and extended to the rook monoid and  related semigroups in \cite{KM}.   A representation for which each irreducible appears with multiplicity one is called a \emph{Gelfand model} (or, simply, a \emph{model}), because of the work in \cite{BGG} on models for complex Lie groups.

In \cite{HL} the RSK algorithm is extended to work for a large class of well-known, combinatorial  diagram algebras which are subalgebras of the the partition algebra. A consequence  \cite[(5.5)]{HL} of this algorithm is that the sum of the  degrees of the irreducible representations of each of these algebras equals the number of horizontally symmetric basis diagrams in the algebra.  This suggests the existence of a  model representation of each of these  algebras on the span of its symmetric diagrams, and the main result of this paper is to produce such a model.

Let $\CC$ be an algebraically closed field, and let $\A_k$ denote one of the following unital, associative $\CC$-algebras:  the partition,  Brauer,  rook monoid,  rook-Brauer,   Temperley-Lieb,  Motzkin, or  planar rook monoid algebra.
Then $\A_k$ has a basis of diagrams  and a multiplication given by diagram concatenation.   
The algebra $\A_k$ depends on a parameter $x \in \CC$ and is semisimple when $char(\CC) = 0$ or  under special conditions on $char(\CC)>0$ and for all but a finite number of choices of $x \in \CC$.  When $\A_k$ is semisimple, its irreducible modules are indexed by a set $\Lambda_{\A_k}$, and for $\lambda \in \Lambda_{\A_k}$, we let $\A_k^\lambda$ denote the irreducible $\A_k$-module labeled by $\lambda$.  We construct, in a uniform way, an $\A_k$-module $\M_{\A_k}$ which decomposes  as
$
\M_{\A_k} \cong \bigoplus_{\lambda \in \Lambda_{\A_k}} \A_k^\lambda.
$ 

Our model representation is constructed as follows. 
For a basis diagram $d$,  let $d^T$ be its reflection across its horizontal axis and  say that a diagram $t$ is symmetric if  $t^T = t$.  A basis diagram $d$ acts on a symmetric diagram $t$ by ``signed conjugation": $d \cdot t = \sgn(d,t) \, d t d^T$, where $\sgn(d,t)$ is the sign on the permutation of the fixed blocks of $t$ induced by conjugation by $d$ (see Example \ref{SignedConjugation} for details).   In each example, our basis diagrams are assigned a rank, which is the number of blocks in the diagram that propagate from the top row to the bottom row.  We let $\M_{\A_k}^{r}$ be the linear span of the symmetric diagrams of rank $r$ and  our  model is the direct sum $\M_{\A_k} = \oplus_{r=0}^k \M_{\A_k}^r$.

The diagram algebras in this paper naturally form a tower $\A_0 \subseteq \A_1  \subseteq \cdots \subseteq \A_k$. Each algebra contains a Jones basic construction ideal $\J_{k-1} \subseteq \A_k$ such that
$
\A_k \cong \J_{k-1} \oplus \C_k,
$
where $\C_k \cong \CC \S_k$ for nonplanar diagram algebras and  $\C_k \cong \CC {\bf 1}_k$ for  planar diagram algebras.   The ideal $\J_{k-1}$ is in Schur-Weyl duality with one of $\A_{k-1}$ or $\A_{k-2}$ (depending on the specific diagram algebra), and we  are able to take models for each $\C_r, 0 \le r \le k$, and lift them to a  model for $\A_k$.

For the planar diagram algebras --- the Temperley-Lieb, Motzkin, and planar rook monoid algebras --- the algebra $\C \cong \CC {\bf 1}_k$ is trivial and  the model  is trivial. It follows  that $\M_{\A_k}^{r}$ is irreducible and that  signed conjugation  produces a complete set of irreducible modules for the planar algebras. For the nonplanar diagram algebras, the algebra is $\C \cong \CC \S_k$, and we use  the Saxl model for $\S_k$.  In this case  $\M_{\A_k}^r$ is further graded as $\M_{\A_k}^r = \oplus_f  \M_{\A_k}^{r,f}$, where $\M_{\A_k}^{r,f}$ is the linear span of symmetric diagrams of rank $r$ having $f$ ``fixed blocks,"  and $\M_{\A_k}^{r,f}$ decomposes into irreducibles labeled by partitions having $f$ odd parts.

Besides being natural constructions, these  representations are useful in several ways:
(1) In a model representation,  isotypic  components are  irreducible components, so projection operators map  directly onto  irreducible modules without being mixed up among multiple isomorphic copies of the same module.  (2) A key feature of our model is that we give the explicit action of each basis element of $\A_k$ on the basis of $\M_{\A_k}^{r,f}$.  For small values of $k$, and for all values of $k$ in the planar case, these representations  are  irreducible or have few irreducible components. Thus, in practice, the model provides a natural and easy way to compute the explicit action of basis diagrams on irreducible representations.  Indeed, it is through this construction that the irreducible modules for the Motzkin \cite{BH}, the rook-Brauer \cite{dH}, and the planar rook monoid \cite{FHH} were discovered.   (3)
Gelfand models are useful in the study of Markov chains on related combinatorial objects; see, for example, Chapter 3F of \cite{Di} and the references therein, as well as \cite{DH}, \cite{RSW}.

Finally, the enumeration of symmetric diagrams in these algebras according to rank and number of fixed blocks gives rise to well-known, interesting integer sequences.  These combinatorics are analyzed in \ref{ModelDiags}, where we work out the  details of the model representation for each   algebra.

\bigskip
\noindent
\textbf{Acknowledgements}.  We thank Arun Ram for suggesting that we look for model representations of these algebras after seeing the dimension results in \cite{HL}.  We also thank Michael Decker \cite{Dec}, whose honors project, under the direction T.\ Halverson, examined the model characters  of the symmetric group and the partition algebra.  It was during this collaboration that we constructed the combinatorial Saxl model for the symmetric group and conjectured the general construction of Gelfand models for diagram algebras.  Upon the completion and submission of this manuscript, we learned of the preprint by V.\ Mazourchuk \cite{Mz}, who uses different methods to derive an analogous  model to the one in this paper (see the comments in \ref{ModelComparisonSection}).   We also thank the anonymous referees for several helpful recommendations. 
\begin{section}{The Partition Algebra and its Diagram Subalgebras}
\label{sec:DiagAlgs}

In this section, we describe the partition  algebra $\P_k(x)$ over $\CC$ with a parameter $x \in \CC$ and realize the other diagram algebras of interest in this paper as  subalgebras of $\P_k(x)$.  The main results of this paper require that $\CC$ be chosen so that $\P_k(n)$ is semisimple. For example one may choose $\CC$ such that $char(\CC) = 0$.

\begin{subsection}{The partition monoid $\PP_k$}\label{sec:PartMonoid}

For $k \in \ZZ_{> 0}$, let $\PP_k$ denote the set of set partitions of $\{1, 2, \ldots, k, 1', 2', \ldots, k'\}$.  We  represent a set partition $d\in \PP_k$ by a diagram with vertices in the top row labeled $1, \ldots,k$  and vertices
in the bottom row labeled $1', \ldots, k'$.  Assign edges in this diagram so that the connected components equal the underlying set partition $d$. For example, the following is a diagram  $d \in \PP_{12}$,
$$
\begin{array}{c}
\begin{tikzpicture}[scale=.5,line width=1pt] 
\foreach \i in {1,...,12} 
{ \path (\i,1) coordinate (T\i); \path (\i,-1) coordinate (B\i); } 
\filldraw[fill= black!10,draw=black!10,line width=4pt]  (T1) -- (T12) -- (B12) -- (B1) -- (T1);
\draw[black] (T1)  .. controls +(.1,-.5) and +(-.1,-.5) .. (T3) ;
\draw[black] (T9)  .. controls +(.1,-.5) and +(-.1,-.5) .. (T10) ;
\draw[black] (T4) .. controls +(.1,-.5) and +(-.1,-.5) .. (T7) ;
\draw[black] (T1) .. controls +(0,-1.5) and +(-.1,.5) .. (B4);
\draw[black] (T5) .. controls +(0,-1.5) and +(0,2) .. (B1);
\draw[black] (T6) -- (B9);
\draw[black] (T8) -- (B7);
\draw[black] (T9) .. controls +(0,-1.5) and +(0,1.5) ..  (B12);
\draw[black] (T12) -- (B10);
\draw[black] (B8) .. controls +(.1,.5) and +(-.1,.5) .. (B11) ;
\draw[black] (B4) .. controls +(.1,.5) and +(-.1,.5) .. (B6) ;
\draw[black] (B1) .. controls +(.1,1) and +(-.1,1) .. (B5) ;
\draw[black] (B2) .. controls +(.1,.5) and +(-.1,.5) .. (B3) ;\foreach \i in {1,...,12} 
{ \fill (T\i) circle (4pt); \fill (B\i) circle (4pt); }
\draw  (T1)  node[above=0.1cm]{$\scriptstyle{1}$};\draw  (B1)  node[below=0.1cm]{$\scriptstyle{1'}$};
\draw  (T2)  node[above=0.1cm]{$\scriptstyle{2}$};\draw  (B2)  node[below=0.1cm]{$\scriptstyle{2'}$};
\draw  (T3)  node[above=0.1cm]{$\scriptstyle{3}$};\draw  (B3)  node[below=0.1cm]{$\scriptstyle{3'}$};
\draw  (T4)  node[above=0.1cm]{$\scriptstyle{4}$};\draw  (B4)  node[below=0.1cm]{$\scriptstyle{4'}$};
\draw  (T5)  node[above=0.1cm]{$\scriptstyle{5}$};\draw  (B5)  node[below=0.1cm]{$\scriptstyle{5'}$};
\draw  (T6)  node[above=0.1cm]{$\scriptstyle{6}$};\draw  (B6)  node[below=0.1cm]{$\scriptstyle{6'}$};
\draw  (T7)  node[above=0.1cm]{$\scriptstyle{7}$};\draw  (B7)  node[below=0.1cm]{$\scriptstyle{7'}$};
\draw  (T8)  node[above=0.1cm]{$\scriptstyle{8}$};\draw  (B8)  node[below=0.1cm]{$\scriptstyle{8'}$};
\draw  (T9)  node[above=0.1cm]{$\scriptstyle{9}$};\draw  (B9)  node[below=0.1cm]{$\scriptstyle{9'}$};
\draw  (T10)  node[above=0.1cm]{$\scriptstyle{10}$};\draw  (B10)  node[below=0.1cm]{$\scriptstyle{10'}$};
\draw  (T11)  node[above=0.1cm]{$\scriptstyle{11}$};\draw  (B11)  node[below=0.1cm]{$\scriptstyle{11'}$};
\draw  (T12)  node[above=0.1cm]{$\scriptstyle{12}$};\draw  (B12)  node[below=0.1cm]{$\scriptstyle{12'}$};
\end{tikzpicture} 
\end{array}
 = 
\left\{
\begin{array}{l}
\{1,3,4',6'\}, \{2\}, \{4,7\}, \{5,1',5'\},  \{6, 9'\},  \{8,7'\}, \\
  \{9,10,12'\}, \{11\}, \{12,10'\}, \{2',3'\}, \{8',11'\} 
\end{array}
\right\}.
$$
We refer to the parts of a set partition as \emph{blocks}, so that the above diagram has 11 blocks.
The diagram of $d$ is not unique, since it only depends on the underlying  connected components.  We make the following convention:  if a block contains vertices from both the top row and bottom row, then \emph{we  always connect the leftmost vertex in the top row of a block with the leftmost vertex in the bottom row of the block by a single vertical edge}.

 We multiply two set partition diagrams $d_1, d_2 \in \PP_k$ as follows. Place $d_1$ above $d_2$ and identify each vertex $j'$ in the bottom row of $d_1$ with the corresponding vertex $j$ in the top row of $d_2$. Remove any connected components that live entirely in the middle row and let $d_1 \circ d_2 \in \PP_k$  be the resulting diagram.  For example, if 
$$
d_1 = 
\begin{array}{c}
\begin{tikzpicture}[scale=.5,line width=1pt] 
\foreach \i in {1,...,12} 
{ \path (\i,1) coordinate (T\i); \path (\i,-1) coordinate (B\i); } 
\filldraw[fill= black!10,draw=black!10,line width=4pt]  (T1) -- (T12) -- (B12) -- (B1) -- (T1);
\draw[black] (T1)  .. controls +(.1,-.5) and +(-.1,-.5) .. (T3) ;
\draw[black] (T9)  .. controls +(.1,-.5) and +(-.1,-.5) .. (T10) ;
\draw[black] (T4) .. controls +(.1,-.5) and +(-.1,-.5) .. (T7) ;
\draw[black] (T1) .. controls +(0,-1.5) and +(-.1,.5) .. (B4);
\draw[black] (T5) .. controls +(0,-1.5) and +(0,2) .. (B1);
\draw[black] (T6) -- (B9);
\draw[black] (T8) -- (B7);
\draw[black] (T9) .. controls +(0,-1.5) and +(0,1.5) ..  (B12);
\draw[black] (T12) -- (B10);
\draw[black] (B8) .. controls +(.1,.5) and +(-.1,.5) .. (B11) ;
\draw[black] (B4) .. controls +(.1,.5) and +(-.1,.5) .. (B6) ;
\draw[black] (B1) .. controls +(.1,1) and +(-.1,1) .. (B5) ;
\draw[black] (B2) .. controls +(.1,.5) and +(-.1,.5) .. (B3) ;
\foreach \i in {1,...,12} 
{ \fill (T\i) circle (4pt); \fill (B\i) circle (4pt); }
\end{tikzpicture} 
\end{array}
\quad\hbox{and}\quad
d_2 = 
\begin{array}{c}
\begin{tikzpicture}[scale=.5,line width=1pt] 
\foreach \i in {1,...,12} 
{ \path (\i,1) coordinate (T\i); \path (\i,-1) coordinate (B\i); } 
\filldraw[fill= black!10,draw=black!10,line width=4pt]  (T1) -- (T12) -- (B12) -- (B1) -- (T1);
\draw[black] (T3)  .. controls +(.1,-1) and +(-.1,-1) .. (T8) ;
\draw[black] (T2)  .. controls +(.1,-1.5) and +(-.1,-1.5) .. (T11) ;
\draw[black] (T9)  .. controls +(.1,-.75) and +(-.1,-.75) .. (T10) ;
\draw[black] (T5)  .. controls +(.1,-.75) and +(-.1,-.75) .. (T6) ;
\draw[black] (T1) -- (B1);
\draw[black] (T4) -- (B2);
\draw[black] (T7) -- (B12);
\draw[black] (B1) .. controls +(.1,.75) and +(-.1,.75) .. (B3) ;
\draw[black] (B3) .. controls +(.1,.75) and +(-.1,.75) .. (B5) ;
\draw[black] (B6) .. controls +(.1,.75) and +(-.1,.75) .. (B7) ;
\draw[black] (B7) .. controls +(.1,.75) and +(-.1,.75) .. (B9) ;

\foreach \i in {1,...,12} 
{ \fill (T\i) circle (4pt); \fill (B\i) circle (4pt); }
\end{tikzpicture} 
\end{array}
$$
then 
$$
d_1 \circ d_2 = 
 \begin{array}{c}
\begin{tikzpicture}[scale=.5,line width=1pt] 
\foreach \i in {1,...,12} 
{ \path (\i,1) coordinate (T\i); \path (\i,-1) coordinate (B\i); } 
\filldraw[fill= black!10,draw=black!10,line width=4pt]  (T1) -- (T12) -- (B12) -- (B1) -- (T1);
\draw[black] (T1)  .. controls +(.1,-.5) and +(-.1,-.5) .. (T3) ;
\draw[black] (T9)  .. controls +(.1,-.5) and +(-.1,-.5) .. (T10) ;
\draw[black] (T4) .. controls +(.1,-.5) and +(-.1,-.5) .. (T7) ;
\draw[black] (T1) .. controls +(0,-1.5) and +(-.1,.5) .. (B4);
\draw[black] (T5) .. controls +(0,-1.5) and +(0,2) .. (B1);
\draw[black] (T6) -- (B9);
\draw[black] (T8) -- (B7);
\draw[black] (T9) .. controls +(0,-1.5) and +(0,1.5) ..  (B12);
\draw[black] (T12) -- (B10);
\draw[black] (B8) .. controls +(.1,.5) and +(-.1,.5) .. (B11) ;
\draw[black] (B4) .. controls +(.1,.5) and +(-.1,.5) .. (B6) ;
\draw[black] (B1) .. controls +(.1,1) and +(-.1,1) .. (B5) ;
\draw[black] (B2) .. controls +(.1,.5) and +(-.1,.5) .. (B3) ;
\foreach \i in {1,...,12} 
{ \fill (T\i) circle (4pt); \fill (B\i) circle (4pt); }
\end{tikzpicture}  \\
\begin{tikzpicture}[scale=.5,line width=1pt] 
\foreach \i in {1,...,12} 
{ \path (\i,1) coordinate (T\i); \path (\i,-1) coordinate (B\i); } 
\filldraw[fill= black!10,draw=black!10,line width=4pt]  (T1) -- (T12) -- (B12) -- (B1) -- (T1);
\draw[black] (T3)  .. controls +(.1,-1) and +(-.1,-1) .. (T8) ;
\draw[black] (T2)  .. controls +(.1,-1.5) and +(-.1,-1.5) .. (T11) ;
\draw[black] (T9)  .. controls +(.1,-.75) and +(-.1,-.75) .. (T10) ;
\draw[black] (T5)  .. controls +(.1,-.75) and +(-.1,-.75) .. (T6) ;
\draw[black] (T1) -- (B1);
\draw[black] (T4) -- (B2);
\draw[black] (T7) -- (B12);
\draw[black] (B1) .. controls +(.1,.75) and +(-.1,.75) .. (B3) ;
\draw[black] (B3) .. controls +(.1,.75) and +(-.1,.75) .. (B5) ;
\draw[black] (B6) .. controls +(.1,.75) and +(-.1,.75) .. (B7) ;
\draw[black] (B7) .. controls +(.1,.75) and +(-.1,.75) .. (B9) ;
\foreach \i in {1,...,12} 
{ \fill (T\i) circle (4pt); \fill (B\i) circle (4pt); }
\end{tikzpicture} \end{array}
= 
\begin{array}{c}
\begin{tikzpicture}[scale=.5,line width=1pt] 
\foreach \i in {1,...,12} 
{ \path (\i,1) coordinate (T\i); \path (\i,-1) coordinate (B\i); } 
\filldraw[fill= black!10,draw=black!10,line width=4pt]  (T1) -- (T12) -- (B12) -- (B1) -- (T1);
\draw[black] (T1)  .. controls +(.1,-.75) and +(-.1,-.75) .. (T3) ;
\draw[black] (T3)  .. controls +(.1,-.75) and +(-.1,-.75) .. (T5) ;
\draw[black] (T4) .. controls +(.1,-1) and +(-.1,-1) .. (T7) ;
\draw[black] (T9)  .. controls +(.1,-.5) and +(-.1,-.5) .. (T10) ;
\draw[black] (T6)  .. controls +(.1,-1.25) and +(-.1,-1.25) .. (T12) ;
\draw[black] (T1) -- (B1);
\draw[black] (T8) -- (B12);
\draw[black] (B1) .. controls +(.1,.75) and +(-.1,.75) .. (B2) ;
\draw[black] (B2) .. controls +(.1,.75) and +(-.1,.75) .. (B3) ;
\draw[black] (B3) .. controls +(.1,.75) and +(-.1,.75) .. (B5) ;
\draw[black] (B6) .. controls +(.1,.75) and +(-.1,.75) .. (B7) ;
\draw[black] (B7) .. controls +(.1,.75) and +(-.1,.75) .. (B9) ;
\foreach \i in {1,...,12} 
{ \fill (T\i) circle (4pt); \fill (B\i) circle (4pt); }
\end{tikzpicture} \end{array}.
$$
Diagram multiplication is associative and makes $\PP_k$ a monoid with identity
$
\mathbf{1}_k = 
\begin{array}{c}
\begin{tikzpicture}[scale=.4,line width=1pt] 
\foreach \i in {1,...,6} 
{ \path (\i,.5) coordinate (T\i); \path (\i,-.5) coordinate (B\i); } 
\filldraw[fill= black!10,draw=black!10,line width=4pt]  (T1) -- (T6) -- (B6) -- (B1) -- (T1);
\draw[black] (T1) -- (B1);
\draw[black] (T2) -- (B2);
\draw[black] (T3) -- (B3);
\draw[black] (T5) -- (B5);
\draw[black] (T6) -- (B6);
\foreach \i in {1,...,3} 
{ \fill (T\i) circle (4pt); \fill (B\i) circle (4pt); } 
\foreach \i in {5,...,6} 
{ \fill (T\i) circle (4pt); \fill (B\i) circle (4pt); } 
\draw (T4) node {$\cdots$};
\draw (B4) node {$\cdots$};
\end{tikzpicture}\end{array}.
$
\end{subsection}

\begin{subsection}{The partition algebra $\P_k(x)$}

Now let $x \in \CC$, define  $\P_0(x) = \CC$, and for $k \geq 1$,  let $\P_k(x)$ be the $\CC$-vector space with basis  $\PP_k$.  If $d_1, d_2 \in \PP_k$,  let  $\kappa(d_1,d_2)$ denote the number of  connected components that are removed from the middle row in computing $d_1 \circ d_2$, and define
\begin{equation}\label{PartitionAlgebraMultiplication}
d_1 d_2 = x^{\kappa(d_1,d_2)}\, d_1 \circ d_2.
\end{equation}
In the multiplication example of the previous section $\kappa(d_1,d_2) = 1$ and  $d_1 d_2 = x (d_1 \circ d_2)$.
This product makes $\P_k(x)$ an associative algebra with identity ${\bf 1}_k$.

We say that a block $B$ in a set partition diagram $d \in \PP_k$ is a \emph{propagating} block if $B$ contains vertices from both the top and bottom row of $d$; that is, both $B \cap \{1, 2, \ldots, k\}$ and $B \cap \{1',2',\ldots,k'\}$ are nonempty. 
The \emph{rank} of $d \in  \PP_k$ (also called the \emph{propagating number}) is
\begin{equation}\label{def:propagating}
\pn(d) = 
\left(
\begin{array}{l}
\hbox{the number of  propagating blocks in $d$}
\end{array}\right).
\end{equation}
The rank satisfies 
\begin{equation}
\pn(d_1 d_2) \le \min(\pn(d_1),\pn(d_2)). 
\end{equation}
For $0 \le r \le k$, we let $\J_r \subseteq \P_k(x)$ be the $\CC$-span of the diagrams of rank \emph{less than or equal} to $r$.   Then $\J_r$ is a two-sided ideal in $\P_k(x)$, and we have a tower of ideals:
$\J_0 \subseteq \J_1 \subseteq \J_2 \subseteq \cdots \subseteq \J_k  = \P_k(x).$

The partition algebra was first defined independently by P.P. Martin \cite{Ma} and V.F.R.\ Jones \cite{Jo2} as a higher-dimension generalization of the Temperley-Lieb algebra in statistical mechanics (see also \cite{HR2} for a survey of many results on the partition algebra).

\end{subsection}

\begin{subsection}{Subalgebras}\label{sec:subalgebras}

For each $k \in  \ZZ_{>0}$, the following are subalgebras of the partition algebra $\P_k(x)$:
\begin{eqnarray*}
\CC\S_k & =&   \CC\hbox{-span}\{\, d \in \PP_k \ | \ \pn(d) = k \}, \\
\B_k(x) & =&  \CC\hbox{-span}\{\, d\in \PP_k \ |\ \hbox{all blocks of $d$ have size 2}\}, \\
\R_k &=& \CC\hbox{-span}\left\{\, d\in \PP_k \ \bigg|\ 
\begin{array}{l}
\hbox{all blocks of $d$ have at most one vertex in $\{1, \ldots k\}$} \\
\hbox{and at most one vertex in $\{1', \ldots k'\}$} \\
\end{array}\right\}, \\
\RB_k(x) &=& \CC\hbox{-span}\{\, d\in \PP_k \ |\ \hbox{all blocks of $d$ have size 1 or 2}\}.
\end{eqnarray*}
Here, $\CC\S_k$ is the group algebra of the symmetric group, $\B_k(x)$ is the Brauer algebra \cite{Br},  $\R_k$ is the rook monoid algebra \cite{So}, and $\RB_k(x)$ is the rook-Brauer algebra \cite{dH}, \cite{MM}.  

A set partition is {\it planar}  if it can be represented as a diagram without edge crossings inside of the rectangle formed
by its vertices.  The planar partition algebra \cite{Jo2} is
$$
\planarP_k(x) =  \CC\hbox{-span}\{\, d \in \PP_k \ | \ d \hbox{ is planar }  \}.
$$
The following are the planar subalgebras of $\P_k(x)$:
$$
\begin{array}{rclcrcl}
\CC\{ {\bf 1}_k \} & =&  \CC \S_k \cap \planarP_k(x), & \hskip.4in &  \TL_k(x) & = &   \B_k(x) \cap \planarP_k(x), \\
\PR_k & =&   \R_k \cap \planarP_k(x), && \Motz_{k}(x) & = &  \RB_k(x) \cap \planarP_k(x).
\end{array}
$$
Here, $\TL_k(x)$ is the Temperley-Lieb algebra \cite{TL}, $\PR_k$ is the planar rook monoid algebra \cite{FHH}, and $\Motz_{k}(x)$ is the Motzkin algebra \cite{BH}.  The parameter $x$ does not arise when multiplying symmetric group diagrams (as there are never middle blocks to be removed).  The parameter is set to be $x =1$ for the rook monoid algebra and the planar rook monoid algebra. Here are examples from each of these subalgebras:
$$
\begin{array}{l c l}
\begin{array}{c}
\begin{tikzpicture}[scale=.4,line width=1pt] 
\foreach \i in {1,...,10}  { \path (\i,1) coordinate (T\i); \path (\i,-1) coordinate (B\i); } 
\filldraw[fill= black!10,draw=black!10,line width=4pt]  (T1) -- (T10) -- (B10) -- (B1) -- (T1);
\draw[black] (T1) .. controls +(.1,-.5) and +(-.1,-.5) .. (T3) ;
\draw[black] (T4) .. controls +(.1,-.75) and +(-.1,-1.1) .. (T8) ;
\draw[black] (T5) .. controls +(.1,-.5) and +(-.1,-.5) .. (T6) ;
\draw[black] (T1) -- (B1);
\draw[black] (T4) .. controls +(0,-1) and +(0,1) .. (B3);
\draw[black] (T10) .. controls +(0,-1) and +(0,1.5) .. (B6);
\draw[black] (B1) .. controls +(.1,.75) and +(-.1,.75) .. (B2) ;
\draw[black] (B3) .. controls +(.1,.75) and +(-.1,.75) .. (B5) ;
\draw[black] (B6) .. controls +(.1,.75) and +(-.1,.75) .. (B7) ;
\draw[black] (B7) .. controls +(.1,.75) and +(-.1,.75) .. (B8) ;
\draw[black] (B8) .. controls +(.1,.75) and +(-.1,.75) .. (B10) ;
\foreach \i in {1,...,10}  { \fill (T\i) circle (4pt); \fill (B\i) circle (4pt); } 
\end{tikzpicture}\end{array}
 \in \planarP_{10}(x) &  & 
 \begin{array}{c}
 \begin{tikzpicture}[scale=.4,line width=1pt] 
\foreach \i in {1,...,10}  { \path (\i,1) coordinate (T\i); \path (\i,-1) coordinate (B\i); } 
\filldraw[fill= black!10,draw=black!10,line width=4pt]  (T1) -- (T10) -- (B10) -- (B1) -- (T1);
\draw[black] (T1) -- (B2);\
\draw[black] (T2) -- (B4);
\draw[black] (T3) -- (B5);
\draw[black] (T4) -- (B6);
\draw[black] (T5) -- (B1);
\draw[black] (T6) -- (B8);
\draw[black] (T7) -- (B3);
\draw[black] (T8) -- (B9);
\draw[black] (T9) -- (B10);
\draw[black] (T10) -- (B7);
\foreach \i in {1,...,10}  { \fill (T\i) circle (4pt); \fill (B\i) circle (4pt); } 
\end{tikzpicture}\end{array}
 \in \S_{10} 
 \\
\begin{array}{c}
\begin{tikzpicture}[scale=.4,line width=1pt] 
\foreach \i in {1,...,10}  { \path (\i,1) coordinate (T\i); \path (\i,-1) coordinate (B\i); } 
\filldraw[fill= black!10,draw=black!10,line width=4pt]  (T1) -- (T10) -- (B10) -- (B1) -- (T1);
\draw[black] (T7) -- (B9);
\draw[black] (T10) -- (B8);
\draw[black] (T9) -- (B10);
\draw[black] (T2) -- (B4);
\draw[black] (T1) .. controls +(.1,-.75) and +(-.1,-.75) .. (T3) ;
\draw[black] (T4) .. controls +(.1,-1.1) and +(-.1,-1.1) .. (T8) ;
\draw[black] (T5) .. controls +(.1,-.5) and +(-.1,-.5) .. (T6) ;
\draw[black] (B1) .. controls +(.1,.5) and +(-.1,.5) .. (B3) ;
\draw[black] (B5) .. controls +(.1,1.1) and +(-.1,1.1) .. (B7) ;
\draw[black] (B2) .. controls +(.1,1.1) and +(-.1,1.1) .. (B6) ;
\foreach \i in {1,...,10}  { \fill (T\i) circle (4pt); \fill (B\i) circle (4pt); } 
\end{tikzpicture}\end{array}
 \in \B_{10}(x) &  & 
 \begin{array}{c}
 \begin{tikzpicture}[scale=.4,line width=1pt] 
\foreach \i in {1,...,10}  { \path (\i,1) coordinate (T\i); \path (\i,-1) coordinate (B\i); } 
\filldraw[fill= black!10,draw=black!10,line width=4pt]  (T1) -- (T10) -- (B10) -- (B1) -- (T1);
\draw[black] (T8) -- (B6);
\draw[black] (T10) -- (B10);
\draw[black] (T9) -- (B7);
\draw[black] (T3) -- (B5);
\draw[black] (T1) .. controls +(.1,-.75) and +(-.1,-.75) .. (T2) ;
\draw[black] (T4) .. controls +(.1,-1.1) and +(-.1,-1.1) .. (T7) ;
\draw[black] (T5) .. controls +(.1,-.5) and +(-.1,-.5) .. (T6) ;
\draw[black] (B2) .. controls +(.1,.5) and +(-.1,.5) .. (B3) ;
\draw[black] (B8) .. controls +(.1,.75) and +(-.1,.75) .. (B9) ;
\draw[black] (B1) .. controls +(.1,1.1) and +(-.1,1.1) .. (B4) ;
\foreach \i in {1,...,10}  { \fill (T\i) circle (4pt); \fill (B\i) circle (4pt); } 
\end{tikzpicture}\end{array}
 \in \mathsf{TL}_{10}(x)
 \\
 \begin{array}{c}
\begin{tikzpicture}[scale=.4,line width=1pt] 
\foreach \i in {1,...,10}  { \path (\i,1) coordinate (T\i); \path (\i,-1) coordinate (B\i); } 
\filldraw[fill= black!10,draw=black!10,line width=4pt]  (T1) -- (T10) -- (B10) -- (B1) -- (T1);
\draw[black] (T7) -- (B10);
\draw[black] (T10) -- (B8);
\draw[black] (T2) -- (B4);
\draw[black] (T1) .. controls +(.1,-.75) and +(-.1,-.75) .. (T4) ;
\draw[black] (T3) .. controls +(.1,-1.1) and +(-.1,-1.1) .. (T8) ;
\draw[black] (B1) .. controls +(.1,.5) and +(-.1,.5) .. (B3) ;
\draw[black] (B5) .. controls +(.1,1.1) and +(-.1,1.1) .. (B7) ;
\draw[black] (B2) .. controls +(.1,1.1) and +(-.1,1.1) .. (B6) ;
\foreach \i in {1,...,10}  { \fill (T\i) circle (4pt); \fill (B\i) circle (4pt); } 
\end{tikzpicture}\end{array}
 \in \mathsf{RB}_{10}(x)   &  & 
 \begin{array}{c}
\begin{tikzpicture}[scale=.4,line width=1pt] 
\foreach \i in {1,...,10}  { \path (\i,1) coordinate (T\i); \path (\i,-1) coordinate (B\i); } 
\filldraw[fill= black!10,draw=black!10,line width=4pt]  (T1) -- (T10) -- (B10) -- (B1) -- (T1);
\draw[black] (T8) -- (B6);
\draw[black] (T10) -- (B10);
\draw[black] (T3) -- (B5);
\draw[black] (T1) .. controls +(.1,-.75) and +(-.1,-.75) .. (T2) ;
\draw[black] (T5) .. controls +(.1,-.5) and +(-.1,-.5) .. (T6) ;
\draw[black] (B2) .. controls +(.1,.5) and +(-.1,.5) .. (B3) ;
\draw[black] (B8) .. controls +(.1,.75) and +(-.1,.75) .. (B9) ;
\draw[black] (B1) .. controls +(.1,1.1) and +(-.1,1.1) .. (B4) ;
\foreach \i in {1,...,10}  { \fill (T\i) circle (4pt); \fill (B\i) circle (4pt); } 
\end{tikzpicture}\end{array}
 \in \Motz_{10}(x)  \\
 \begin{array}{c}
\begin{tikzpicture}[scale=.4,line width=1pt] 
\foreach \i in {1,...,10}  { \path (\i,1) coordinate (T\i); \path (\i,-1) coordinate (B\i); } 
\filldraw[fill= black!10,draw=black!10,line width=4pt]  (T1) -- (T10) -- (B10) -- (B1) -- (T1);
\draw[black] (T1) -- (B2);
\draw[black] (T3) -- (B5);
\draw[black] (T5) -- (B1);
\draw[black] (T6) -- (B8);
\draw[black] (T7) -- (B3);
\draw[black] (T8) -- (B9);
\draw[black] (T9) -- (B10);
\draw[black] (T10) -- (B7);
\foreach \i in {1,...,10}  { \fill (T\i) circle (4pt); \fill (B\i) circle (4pt); } 
\end{tikzpicture}\end{array}
 \in \R_{10} 
&  & 
 \begin{array}{c}
\begin{tikzpicture}[scale=.4,line width=1pt] 
\foreach \i in {1,...,10}  { \path (\i,1) coordinate (T\i); \path (\i,-1) coordinate (B\i); } 
\filldraw[fill= black!10,draw=black!10,line width=4pt]  (T1) -- (T10) -- (B10) -- (B1) -- (T1);
\draw[black] (T1) -- (B2);
\draw[black] (T3) -- (B3);
\draw[black] (T5) -- (B4);
\draw[black] (T6) -- (B5);
\draw[black] (T7) -- (B7);
\draw[black] (T8) -- (B9);
\draw[black] (T10) -- (B10);
\foreach \i in {1,...,10}  { \fill (T\i) circle (4pt); \fill (B\i) circle (4pt); } 
\end{tikzpicture}\end{array}
 \in \PR_{10}
\end{array}
$$

\end{subsection}

\end{section}

\begin{section}{A Model Representation of the Symmetric Group}

\begin{subsection}{Saxl's model characters of $\S_k$}\label{sec:Saxl}

An involution $t \in \S_k$ is a permutation such that $t^2 = 1$. In disjoint cycle notation, involutions consist of 2-cycles and fixed points. Let $\I_k$ be the set of involutions in $\S_k$ and let $\I_{k}^f$ be the involutions in $\S_k$ which fix precisely $f$ points; that is,
\begin{equation}
\I_{\S_k} = \left\{\ t \in \S_n \ \big\vert  \  t^2 = 1 \right\} \quad\hbox{and}\quad
\I_{\S_k}^f = \left\{\ t \in \S_n \ \big\vert  \  t^2 = 1 \text{ and $t$ has $f$ fixed points } \right\}.
\end{equation}
For a fixed involution $t \in \I_{k}^f$, let $\C(t)\subseteq \S_n$ be the centralizer of $t$ in $\S_k$. Let $\S_k$ act on itself by conjugation so that 
$w \cdot \sigma = w \sigma w^{-1}$ for all $w, \sigma \in \S_k$.  Then $\C(t)$ and  $\I_{\S_k}^{f}$ are the stabilizer  and  orbit of $t$, respectively, so $|\S_k|= |\C(t)| \cdot |\I_{\S_k}^{f}|$.  

If $w \in \C(t)$,  then $w t w^{-1} = t$, so  $w$ fixes $t$ but possibly permutes the fixed points of $t$.  
Let $\pi_{f}$ be the linear character of $\C(t)$ such that $\pi_{f}(w)$ is the sign of the permutation of $w$ on the fixed points of $t$.  Let $\odd(\lambda)$ denote the number of odd parts of the partition $\lambda$. Saxl
 \cite{Sxl}  (see also \cite{Klj} or \cite{IRS}) proves the following decomposition of the induced character
\begin{equation}\label{SaxlThm}
\varphi_{\S_k}^f := \Ind_{\C(t)}^{\S_n} (\pi_{f}) =\!\!\! \sum_{ \genfrac{}{}{0pt}{}{\lambda \vdash k}{\odd(\lambda)=f}}\!\!\! \chi^\lambda_{\S_k}
\qquad\hbox{and thus}\qquad
\varphi_{\S_k} :=  \sum_{\ell = 0}^{\lfloor k/2 \rfloor} \varphi^{k - 2 \ell}_{\S_k} = \sum_{\lambda\vdash k}  \chi^\lambda_{\S_k}.
\end{equation}
This  generalizes the classic result (see \cite[Theorem IV]{Th} or \cite[5.4.23]{JK}) for fixed-point-free permutations, i.e., the case where $f = 0$.  In this case,  there are no fixed points and  $\pi_{0}$ is the trivial character of $\C(t)$.

The number of involutions in $\S_k$ having $f = k - 2\ell$ fixed points and $\ell$ transpositions is
\begin{equation}
 |\I_{\S_k}^{f}| = |\I_{\S_k}^{k-2 \ell}|= \binom{k}{2 \ell} (2 \ell-1)!!, \qquad\hbox{ where $(2 \ell-1) !! = (2 \ell -1) (2 \ell -3) \cdots 3 \cdot 1.$ }
\end{equation}
If we let $\s_k = |\I_{\S_k}| = \sum_{\ell=0}^{\lfloor k/2\rfloor} |\I_{\S_k}^{k-2 \ell}|$ denote the total number of involutions in $\S_k$, then $\s_k$ is the degree of $\varphi_{\S_k}$ and is the sum of the dimensions of the irreducible $\S_k$ modules.  The first few values of $\s_k$ are 1, 1, 2, 4, 10, 26, 76, 232, 764, 2620, 9496.
The sequence $\mathsf{s}_k$ is \cite{OEIS}  \href{http://oeis.org/A000085}{A000085} and has the well-known exponential generating function
$
e^{x^2/2+x} = \sum_{k=0}^\infty \mathsf{s}_k \frac{x^k}{k!}.
$

\end{subsection}

\begin{subsection}{The model representation of $\S_k$}

We  now construct the corresponding induced model representation. For $t \in \I_{\S_k}^{f}$,  let $\W_t = \CC t$ be the 1-dimensional $\C(t)$-module with character $\pi_f$, so that $c \in \C(t)$ acts on $t$ by $c \cdot t = S(c,t)\, c t c^{-1} = S(c,t) t$, where $S(c,t) = \pi_f(c)$ is the sign of the permutation induced by $c$ on the fixed points of $t$. The cosets of $\C(t)$ in $\S_k$  are in bijection with the  $\S_k$-orbits of $t$, which is the set of involutions $\I_{\S_k}^{f}$.  
Consider the induced module
\begin{equation}
 \Ind_{\C(t)}^{\S_k} ( \W_t) \cong \CC \S_k \otimes_{\C(t)} t,
\end{equation}
where the $\S_k$ action is given by 
$
x \cdot (w \otimes_{\C(t)} t) =  x w \otimes_{\C(t)} t,$ for all $w, x \in \S_k$,
and is extended linearly to $\CC \S_k  \otimes_{\C(t)} \W_t$.   Since $\W_t$ is 1-dimensional, $\dim( \Ind_{\C(t)}^{\S_k} ( \W_t) ) = |\S_k|/|\C(t)| = | \I_{\S_k}^{f}|.$   Let $\{w_s \vert s \in \I_{\S_k}^{f}\}$ be a set of distinct coset representatives of $\C(t) \in \S_k$ such that $w_s t w_s^{-1} = s$.     If $w \in \S_k$ with $w = w_s c \in w_s \C(t)$, then since the tensor product is over $\C(t)$, we have
$
w \otimes_{\C(t)} t = w_sc  \otimes_{\C(t)}   t =  w_s  \otimes_{\C(t)} c \cdot t = \sgn(c,t) w_s  \otimes_{\C(t)} t.
$
Thus, the vectors of the form $w_s \otimes_{\C(t)} t$ span $\Ind_{\C(t)}^{\S_k} ( \W_t)$ and  by comparing dimensions   $\{ w_s  \otimes_{\C(t)} t\  \vert\  s \in \I_{\S_k}^f \}$  is a basis of $ \Ind_{\C(t)}^{\S_k} ( \W_t)$.

The induced module $\Ind_{\C(t)}^{\S_k} ( \W_t)$ has a nice combinatorial construction.  If $w \in \S_k$ and $t \in \I_{\S_k}^f$ then $w t w^{-1} \in \I_{\S_k}^{f}$ is an involution with the same number $f$ of fixed points as $t$.  However, the relative position of the fixed points are permuted in the map $t \mapsto w t w^{-1}$. Define $\sgn(w,t)$ to be the sign of the permutation induced on the fixed points of $t$ under conjugation.  That is,
\begin{equation}\label{SaxlSign}
S(w,t) = (-1)^{ | \{\, 1 \le i < j \le k \, \vert\, t(i) = i,\, t(j) = j, \hbox{ and } w(i) > w(j)\, \}|}.
\end{equation}
For example, when the following involution is conjugated,
$$
\begin{array}{rc}
w = \begin{array}{c} \begin{tikzpicture}[scale=.4,line width=1pt] 
\foreach \i in {1,...,9} 
{ \path (\i,1) coordinate (T\i); \path (\i,-1) coordinate (B\i); } 
\filldraw[fill= black!10,draw=black!10,line width=4pt]  (T1) -- (T9) -- (B9) -- (B1) -- (T1);
\draw[black] (T1) -- (B3);
\draw[black] (T2) -- (B7);
\draw[black] (T3) -- (B1);
\draw[black] (T4) -- (B5);
\draw[black] (T5) -- (B2);
\draw[black] (T6) -- (B4);
\draw[black] (T7) -- (B8);
\draw[black] (T8) -- (B9);
\draw[black] (T9) -- (B6);
\foreach \i in {1,...,9} 
{ \fill (T\i) circle (4pt); \fill (B\i) circle (4pt); } 
\end{tikzpicture} \end{array} & \\
t = \begin{array}{c}\begin{tikzpicture}[scale=.4,line width=1pt] 
\foreach \i in {1,...,9} 
{ \path (\i,1) coordinate (T\i); \path (\i,-1) coordinate (B\i); } 
\filldraw[fill= blue!10,draw=blue!10,line width=4pt]  (T1) -- (T9) -- (B9) -- (B1) -- (T1);
\draw[black] (T1) -- (B3);
\draw[red] (T2) -- (B2);
\draw[black] (T3) -- (B1);
\draw[black] (T4) -- (B6);
\draw[ForestGreen] (T5) -- (B5);
\draw[black] (T6) -- (B4);
\draw[blue] (T7) -- (B7);
\draw[black] (T8) -- (B9);
\draw[black] (T9) -- (B8);
\draw  (T2)  node[left,red]{$\scriptstyle{r}$};
\draw  (T5)  node[left,green]{$\scriptstyle{g}$};
\draw  (T7)  node[left,blue]{$\scriptstyle{b}$};
\foreach \i in {1,...,9} 
{ \fill (T\i) circle (4pt); \fill (B\i) circle (4pt); } 
\end{tikzpicture} \end{array}&=\begin{array}{c}
\begin{tikzpicture}[scale=.4,line width=1pt] 
\foreach \i in {1,...,9} 
{ \path (\i,1) coordinate (T\i); \path (\i,-1) coordinate (B\i); } 
\filldraw[fill= blue!10,draw=blue!10,line width=4pt]  (T1) -- (T9) -- (B9) -- (B1) -- (T1);
\draw[black] (T1) -- (B3);
\draw[blue] (T2) -- (B2);
\draw[black] (T3) -- (B1);
\draw[ForestGreen] (T4) -- (B4);
\draw[red] (T5) -- (B5);
\draw[black] (T6) -- (B9);
\draw[black] (T7) -- (B8);
\draw[black] (T8) -- (B7);
\draw[black] (T9) -- (B6);
\draw  (T5)  node[left,red]{$\scriptstyle{r}$};
\draw  (T4)  node[left,green]{$\scriptstyle{g}$};
\draw  (T2)  node[left,blue]{$\scriptstyle{b}$};
\foreach \i in {1,...,9} 
{ \fill (T\i) circle (4pt); \fill (B\i) circle (4pt); } 
\end{tikzpicture} \end{array}= wtw^{-1} \\
w^{-1} = \begin{array}{c}
\begin{tikzpicture}[scale=.4,line width=1pt] 
\foreach \i in {1,...,9} 
{ \path (\i,1) coordinate (T\i); \path (\i,-1) coordinate (B\i); } 
\filldraw[fill= black!10,draw=black!10,line width=4pt]  (T1) -- (T9) -- (B9) -- (B1) -- (T1);
\draw[black] (T1) -- (B3);
\draw[black] (T2) -- (B5);
\draw[black] (T3) -- (B1);
\draw[black] (T4) -- (B6);
\draw[black] (T5) -- (B4);
\draw[black] (T6) -- (B9);
\draw[black] (T7) -- (B2);
\draw[black] (T8) -- (B7);
\draw[black] (T9) -- (B8);
\foreach \i in {1,...,9} 
{ \fill (T\i) circle (4pt); \fill (B\i) circle (4pt); } 
\end{tikzpicture} \end{array} & 
\end{array}
$$
the three fixed points $(r,g,b)$ are permuted to $(b,g,r)$ which is an induced permutation of sign $-1$.

Inside of the group algebra $\CC \S_k$ define
\begin{equation}
\M_{\S_k}^f= \CC\text{-span}\left\{\ t \ \vert \ t \in \I_{\S_k}^{f} \right\},
\end{equation}
Define an action of $w \in \S_k$ on a basis element $t \in \I_{\S_k}^{f}$ by 
\begin{equation}\label{SymmetricGroupSignedConjugation}
w \cdot t = \sgn(w,t) w t w^{-1},
\end{equation}
which we refer to as \emph{signed conjugation}.  
Let $\S_k$ act on $\M_{\S_k}^f$ by extending the action of \eqref{SymmetricGroupSignedConjugation} linearly.

\begin{prop}\label{prop:Model}  For  $f =k - 2 \ell$ with $0 \le \ell \le  \lfloor k/2 \rfloor$,  we have $\M_{\S_k}^f \cong \Ind_{\C(t)}^{\S_k} ( \W_t)$.

\end{prop}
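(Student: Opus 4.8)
The plan is to write down the evident candidate isomorphism and verify it is a well-defined, $\S_k$-equivariant bijection. I would set
\[
\Phi\colon \Ind_{\C(t)}^{\S_k}(\W_t)=\CC\S_k\otimes_{\C(t)} t \longrightarrow \M_{\S_k}^f,
\qquad w\otimes_{\C(t)} t\longmapsto w\cdot t=\sgn(w,t)\,wtw^{-1},
\]
and extend linearly. Everything rests on a single ``chain rule'' for the sign \eqref{SaxlSign}: for all $v,w\in\S_k$,
\begin{equation*}
\sgn(vw,t)=\sgn(w,t)\,\sgn(v,wtw^{-1}). \tag{$\ast$}
\end{equation*}

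I would prove $(\ast)$ first, as I expect it to be the only step with genuine content. Let $F=\{\,i:t(i)=i\,\}$, a set of size $f$; conjugation by $w$ restricts to a bijection $F\to w(F)$, and $w(F)$ is precisely the fixed-point set of $wtw^{-1}$. By \eqref{SaxlSign}, $\sgn(w,t)$ is the sign of the bijection $i\mapsto w(i)$, $F\to w(F)$ --- that is, $(-1)$ to the number of inversions taken relative to the increasing orderings of $F$ and of $w(F)$; likewise $\sgn(v,wtw^{-1})$ is the sign of $j\mapsto v(j)$, $w(F)\to vw(F)$, and $\sgn(vw,t)$ the sign of the composite $i\mapsto vw(i)$, $F\to vw(F)$. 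Since every finite totally ordered set of size $f$ has a unique order isomorphism onto $\{1,\dots,f\}$, the sign of a bijection between two such sets (relative to their increasing orderings) equals the sign of the corresponding element of $\S_f$; as $\sgn\colon\S_f\to\{\pm1\}$ is a homomorphism, these signs multiply along $F\to w(F)\to vw(F)$, which is $(\ast)$.

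Granting $(\ast)$, the remaining points are formal. Applying $(\ast)$ with $(v,w)$ replaced by $(w,c)$ for $c\in\C(t)$, and using $ctc^{-1}=t$, gives $\sgn(wc,t)=\sgn(c,t)\,\sgn(w,t)$; since $c$ acts on $\W_t=\CC t$ by the scalar $\sgn(c,t)=\pi_f(c)$, this yields
\[
\Phi(wc\otimes_{\C(t)} t)=\sgn(wc,t)\,wtw^{-1}=\sgn(c,t)\,(w\cdot t)=\Phi(w\otimes_{\C(t)} c\cdot t),
\]
so $\Phi$ respects the balanced-tensor relations and is well defined. Read directly (and using $\sgn(1,t)=1$), $(\ast)$ says that signed conjugation \eqref{SymmetricGroupSignedConjugation} is a genuine left action of $\S_k$ on $\M_{\S_k}^f$, whence
\[
\Phi\big(x\cdot(w\otimes_{\C(t)} t)\big)=\Phi(xw\otimes_{\C(t)} t)=(xw)\cdot t=x\cdot(w\cdot t)=x\cdot\Phi(w\otimes_{\C(t)} t),
\]
so $\Phi$ is a map of $\S_k$-modules.

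For bijectivity I would invoke the basis exhibited just before the proposition: $\{\,w_s\otimes_{\C(t)} t : s\in\I_{\S_k}^f\,\}$ is a basis of $\Ind_{\C(t)}^{\S_k}(\W_t)$, where $w_s$ is any element with $w_s t w_s^{-1}=s$ (such $w_s$ exists since $\I_{\S_k}^f$ is the single conjugacy class of cycle type $(2^\ell,1^f)$). Then $\Phi$ sends this basis to $\{\,\sgn(w_s,t)\,s : s\in\I_{\S_k}^f\,\}$, each of which is a nonzero scalar multiple of a distinct vector of the standard basis \eqref{def:M} of $\M_{\S_k}^f$; hence $\Phi$ carries a basis to a basis and is an isomorphism. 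The only substantive point is $(\ast)$: once one confirms that the combinatorial sign \eqref{SaxlSign} really computes the sign of the induced bijection on fixed points --- and therefore composes multiplicatively --- the well-definedness, equivariance, and bijectivity of $\Phi$ all reduce to bookkeeping.
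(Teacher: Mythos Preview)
Your proof is correct, but it takes a somewhat different route from the paper's. The paper fixes a specific set of coset representatives: it takes $w_s$ to be the \emph{minimal-length} element of its coset $w_s\C(t)$, and observes from the diagram calculus that such a $w_s$ never crosses two fixed points of $t$, so $\sgn(w_s,t)=1$. The isomorphism is then simply $w_s\otimes_{\C(t)}t\leftrightarrow s$, and the computation that $x\cdot(w_s\otimes t)=\sgn(c,t)(w_r\otimes t)$ (where $xw_s=w_rc$) matches $x\cdot s=\sgn(x,s)\,r$ because the only rearrangement of fixed points in $x=w_rcw_s^{-1}$ happens in the middle factor $c$. Your approach instead proves the cocycle identity $(\ast)$ in general and uses it three times: to show $\Phi$ is well defined over $\C(t)$, to show signed conjugation is a genuine action, and to deduce equivariance. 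This is arguably cleaner --- you never need the Coxeter/length structure of $\S_k$, and you get for free that \eqref{SymmetricGroupSignedConjugation} is an action, which the paper leaves implicit. The paper's approach, on the other hand, singles out a canonical basis on which the isomorphism is literally $w_s\otimes t\mapsto s$ with no scalars, which is combinatorially pleasant.
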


\begin{proof} Let $s_i = (i,i+1)$ denote the simple transposition (given here in cycle notation) that exchanges $i$ and $i+1$. Then $s_1, \ldots, s_{k-1}$ generate $\S_k$, and the length $\ell(w)$ of $w \in \S_k$ is the minimum number of simple transpositions needed to write $w$ as a product of simple transpositions. Consider the coset $w \C(t)$ in $\S_k$ and let $w t w^{-1} =s$.
We claim that if $w$ is of minimal length among all permutations in $w \C(t)$, then under the map $t \mapsto w t w^{-1}  = s$ the relative position of the the fixed points of $t$ is not changed. This can be readily verified from the diagram calculus: the length $\ell(w)$ is the number of crossings in the permutation diagram of $w$, and thus the permutation of minimal length that conjugates $t$ to $s$ does not exchange any of the fixed points of $t$.  Now, for $s \in \I_{\S_k}^{f}$ let $w_s$ be the unique minimal-length coset representative such that $w_s t w_s^{-1}  = s$.  Then 
 $\{\ w_s\  \vert\  s \in \I_{\S_k}^{f}\ \}$ is a set of distinct  coset representatives of $\C(t) \in \S_k$.   

We now show that $x \in \S_k$ acts on the basis $\{\ w_s \otimes_{\C(t)} t \  \vert\  s \in \I_{\S_k}^{f}\ \}$ of $\Ind_{\C(t)}^{\S_k} ( \W_t)$ identically to the way that $x$ acts on the basis $\{\, s \in \I_{\S_k}^{f}\, \}$ of $\M_{\S_k}^f$.  We know that  $x w_s \in w_r \C(t)$ for some $r \in \I_{\S_k}^{f}$ so $x w_s  =  w_r c$ for $c \in \C(t)$, and thus
$$
x \cdot (w_s \otimes_{\C(t)} t) = x w_s \otimes_{\C(t)} t = w_r c  \otimes_{\C(t)} t = w_r  \otimes_{\C(t)} c \cdot t = 
S(c,t) (w_r  \otimes_{\C(t)} t).
$$
Now observe that $x = w_r c w_s^{-1}$ so 
$$
x s x^{-1} = (w_r c w_s^{-1}) (w_s t w_s^{-1}) (w_s c^{-1} w^{-1}_r )
= w_r (c t  c^{-1}) w^{-1}_r = w_r t  w^{-1}_r = r.
$$
Furthermore, since $w_r$ does not change the relative order of the fixed points of $t$, the only place where the relative order of the fixed points of $t$ was changed was in the conjugation $c t  c^{-1} = t$.  Thus $S(x,s) = S(c,t)$ and so $x \cdot s = \sgn(x,s) x s x^{-1}$.
\end{proof}

Now, let $\CC$ be chosen so that $\CC \S_k$ is semisimple (for example, $char(\CC) = 0$), and define
\begin{equation}\label{def:M}
\M_{\S_k} =  \CC\text{-span}\left\{\ t \ \vert \ t\in \I_k  \right\} = \bigoplus_{\ell = 0}^{\lfloor n/2 \rfloor} \M_{\S_k}^{k-2\ell}.
\end{equation}
Let $\Lambda_{\S_k}^f = \{ \lambda \vdash k \vert \odd(\lambda) = f \}$. Applying Proposition \ref{prop:Model}  to  \eqref{def:M} and using \eqref{SaxlThm} gives

\begin{thm} \label{thm:Model} Under signed conjugation \eqref{SignedConjugation},  $\M_{\S_k}$ decomposes into irreducible $\S_k$ modules as
$$
\M_{\S_k}= \bigoplus_{\ell = 0}^{\lfloor n/2 \rfloor} \M_{\S_k}^{k-2\ell} \cong  \bigoplus_{\ell = 0}^{\lfloor n/2 \rfloor} \bigoplus_{\lambda \in \Lambda_{\S_k}^{k-2\ell}} \S_k^\lambda =  \bigoplus_{f = 0}^{k} \bigoplus_{\lambda \in \Lambda_{\S_k}^{f}} \S_k^\lambda =  \bigoplus_{\lambda \vdash n} \S_k^\lambda,
$$
where $\Lambda_{\S_k}^{f} = \emptyset$ if there are no partitions of $k$ with $f$ odd parts.
\end{thm}

\end{subsection}

\begin{subsection}{Comparison with other Gelfand models for $\S_k$}\label{ModelComparisonSection}

Adin, Postnikov, and Roichman \cite{APR} (and also \cite{KM}) study a slightly different combinatorial model for $\S_k$, and it is the analog of this model that Mazorchuk derives for the diagram algebras in \cite{Mz}. The sign in \cite{APR} is computed on the two cycles of $t \in \I_k^f$ as follows:
\begin{equation}
s(w,t) = (-1)^{ | \{\, 1 \le i < j \le k \, \vert\, t(i) = j,\, t(j) = i, \hbox{ and } w(i) > w(j)\, \}|}.
\end{equation}
and the action of $\S_k$ on $\I_k^f$ is given as
$$
w \cdot t = s(w,t) w t w^{-1}, \qquad \hbox{for  $w \in S_k$ and $t \in \I_k^f$}.
$$
We let $\overline{\W}_k^f$ denote the corresponding $\S_k$ module, and let $\overline{\W}_k = \oplus_{\ell = 0}^{\lfloor k/2 \rfloor}  \overline{\W}_k^{k-2\ell}$.
In \cite{APR}  it is proved that $\overline{\W}_k$ is a Gelfand model for $\S_k$.  Moreover, the action is given a $q$-extension in \cite{APR} to a Gelfand model for the Iwahori-Hecke algebra $\H_k(q)$ of $\S_k$.   In what follows we prove that the Adin-Postnikov-Roichman model differs from the Saxl model by the sign representation.

\begin{prop} For each $0 \le f \le k$ such that $k -f$ is even, we have $\W_{\S_k}^f \cong \overline{\W}_{\S_k}^f \otimes \S_k^{\sign}$, where $\S_k^{\sign}$ is the sign representation of $\S_k$.
\end{prop}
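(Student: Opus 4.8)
The plan is to realize both $\W_{\S_k}^f$ and $\overline{\W}_{\S_k}^f$ as modules induced from linear characters of the centralizer $\C(t)$, and then apply the tensor (projection) identity. By Proposition~\ref{prop:Model} we have $\W_{\S_k}^f \cong \Ind_{\C(t)}^{\S_k}(\pi_f)$, where $t$ is a fixed involution with $f$ fixed points and $\pi_f$ is the linear character of $\C(t)$ sending $c$ to the sign of the permutation that $c$ induces on the fixed points of $t$. My first step is to check that the argument of Proposition~\ref{prop:Model} applies \emph{mutatis mutandis}, with $\sgn$ replaced by the Adin--Postnikov--Roichman sign $s$, to give $\overline{\W}_{\S_k}^f \cong \Ind_{\C(t)}^{\S_k}(\bar\pi_f)$, where $\bar\pi_f(c) := s(c,t)$ is the sign of the number of $2$-cycles of $t$ reversed by $c$. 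The only points needing attention are: the minimal-length coset representatives $w_s$ used in Proposition~\ref{prop:Model} reverse no $2$-cycle of $t$, i.e.\ $w_s(a)<w_s(b)$ for every $2$-cycle $\{a<b\}$ of $t$ (because $(a\,b)\in\C(t)$ and right multiplication by $(a\,b)$ strictly decreases length whenever $w_s(a)>w_s(b)$); and, for such representatives, the $2$-cycles of $w_s t w_s^{-1}$ that a conjugate $x = w_r c w_s^{-1}$ reverses correspond exactly to the $2$-cycles of $t$ that $c$ reverses, so that $s(x,\,w_s t w_s^{-1}) = s(c,t)$ — this is the role the fixed points play in the original proof.

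Given that, the projection formula $\Ind_{\C(t)}^{\S_k}(U)\otimes L \cong \Ind_{\C(t)}^{\S_k}\!\big(U\otimes\Res_{\C(t)}^{\S_k} L\big)$, valid for any $\S_k$-module $L$, applied with $U=\bar\pi_f$ and $L=\S_k^{\sign}$, reduces the proposition to an identity of \emph{linear} characters of $\C(t)$:
\[
\pi_f \;=\; \bar\pi_f\otimes\Res_{\C(t)}^{\S_k}\S_k^{\sign}.
\]
Since both sides are homomorphisms $\C(t)\to\{\pm1\}$, it suffices to verify this on a generating set. Writing $\ell=(k-f)/2$, the group $\C(t)\cong(\S_2\wr\S_\ell)\times\S_f$ is generated by three kinds of elements: (i) a transposition $(a\,b)$ on a $2$-cycle $\{a,b\}$ of $t$; (ii) a double transposition $(a\,c)(b\,d)$ interchanging two $2$-cycles $\{a<b\},\{c<d\}$ of $t$; (iii) a transposition $(p\,q)$ of two fixed points of $t$. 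On these one computes $\pi_f\mapsto(1,1,-1)$; $\bar\pi_f\mapsto(-1,1,1)$ (type (i) reverses exactly one $2$-cycle, the others none); and $\Res_{\C(t)}^{\S_k}\S_k^{\sign}\mapsto(-1,1,-1)$ (type (ii) is an even permutation). Hence $\bar\pi_f\otimes\Res_{\C(t)}^{\S_k}\S_k^{\sign}\mapsto(1,1,-1)=\pi_f$, which gives the character identity and therefore $\W_{\S_k}^f\cong\overline{\W}_{\S_k}^f\otimes\S_k^{\sign}$.

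The main obstacle is the first step: one must be confident that the Adin--Postnikov--Roichman module $\overline{\W}_{\S_k}^f$ genuinely coincides with $\Ind_{\C(t)}^{\S_k}(\bar\pi_f)$ — that is, that $s$ behaves relative to the $2$-cycles of $t$ the way $\sgn$ behaves relative to the fixed points in Proposition~\ref{prop:Model} — and this is where essentially all the bookkeeping lives; the rest is the short three-generator character computation above. (Alternatively one could try to exhibit the isomorphism directly as a rescaling $t\mapsto\epsilon(t)\,t$ of the common basis $\I_{\S_k}^f$ by signs $\epsilon(t)\in\{\pm1\}$, but determining $\epsilon$ requires the same bookkeeping, so the induced-module route is cleaner.)
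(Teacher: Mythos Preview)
Your proof is correct and takes a genuinely different route from the paper's. The paper argues directly at the level of characters of the combinatorial modules: it observes that only $t$ with $wtw^{-1}=t$ contribute to the trace of $w$, and for such $t$ factors $w = w_\pi w_a w_b$ inside $\C(t)$ (where $w_b$ permutes the fixed points, $w_a$ flips endpoints of individual $2$-cycles, and $w_\pi$ permutes the $2$-cycles pairwise), then checks $\sign(w_\pi)=1$, $\sign(w_a)=s(w,t)$, $\sign(w_b)=S(w,t)$, so that $S(w,t)=s(w,t)\,\sign(w)$ for every such $w$. This is exactly the linear-character identity $\pi_f=\bar\pi_f\otimes\Res\,\sign$ you isolate, but proved for an arbitrary element rather than on generators.

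What you do differently is to first recognise \emph{both} modules as induced --- extending Proposition~\ref{prop:Model} to $\overline{\W}_{\S_k}^f\cong\Ind_{\C(t)}^{\S_k}(\bar\pi_f)$ --- and then invoke the projection formula to reduce everything to the three-generator check on $\C(t)\cong(\S_2\wr\S_\ell)\times\S_f$. This buys a cleaner endgame (three sign computations instead of an explicit factorisation of a general element), at the cost of front-loading the work into the APR analogue of Proposition~\ref{prop:Model}; your sketch of that analogue (minimal-length representatives reverse no $2$-cycle because $(a\,b)\in\C(t)$, and the reversal count transfers through $x=w_r c w_s^{-1}$) is correct. One small point you use implicitly but do not state: that $\bar\pi_f(c)=s(c,t)$ is indeed a group homomorphism $\C(t)\to\{\pm1\}$ (otherwise checking on generators would not suffice); this is immediate once one identifies it with the sign character of the base $\S_2^\ell$ in the wreath product, but it is worth saying.
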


\begin{proof}
Let $t \in \I_{\S_k}^f$ and let $w \in \S_k$ such that $w t w^{-1} = t$.  That is, the $t$-$t$ entry of the matrix of $w$ is nonzero (in both $\overline{\W}_{\S_k}^f$ and ${\W}_{\S_k}^f$) and thus contributes to the trace.
Let $F$ be the set of fixed points of $t$ and  let $t = (a_1,b_1) (a_2, b_2) \cdots (a_\ell, b_\ell)$ be the decomposition of $t$ into $\ell = (k-f)/2$ disjoint 2-cycles with $a_i < b_i$ for each $i$.  Thus the complement of $F$ in $\{1,2, \ldots, k\}$ is $\bar F = \{a_1, b_1, a_2, b_2, \ldots, a_\ell, b_\ell\}$.

Since $w t w^{-1} = t$, we know that $w$ permutes the fixed points $F$ of $t$.  Furthermore, $w$ permutes the transpositions among themselves and possibly transposes the endpoints of the transpositions. We factor $w$ according to these three actions.
Let  $w_a, w_b, w_{\pi} \in \S_k$ be defined as follows:
\begin{enumerate}
\item $w_b(c) = c$ if $c \in \bar F$ and $w_b(d) = w(d)$ if $d \in F$;  thus, $w_b$ permutes the fixed points of $t$ as in $w$ and fixes the others.
\item $w_a(d) = d$ if $d \in F$, $w_a(a_i) = b_i$ and $w_a(b_i) = a_i$ if $w(a_i) > w(b_i)$, and $w_a(a_i) = a_i$ and $w_a(b_i) = b_i$ if $w(a_i) < w(b_i)$; thus $w_a$ transposes the endpoints of the transpositions in $t$ if they are transposed by $w$.
\item $w_\pi(d) = d$ if $d \in F$ and $w_\pi(a_i) = a_{\pi(i)}$ and $w_\pi(b_i) = b_{\pi(i)}$ where $\pi$ is the permutation on the transpositions induced by $w$.
\end{enumerate}
It is easy to check, by  examining the values of these permutations on each element of $F \cup \bar F = \{1, \ldots, k\}$, that 
$$
w = w_{\pi} w_a w_b, \quad\hbox{ and thus }\quad \sign(w) = \sign(w_{\pi}) \sign(w_a) \sign(w_b).
$$
Furthermore, by the  definition of $w_a, w_b, S(w,t),$ and $s(w,t)$ we have $\sign(w_b) = S(w,t)$ and  $\sign(w_a) = s(w,t)$.  Finally, since $w_\pi$ permutes the set of transpositions $(a_i,b_i)$, keeping $a_i < b_i$, it can be written as a product of pairs of transpositions of the form $(a_i,a_j)(b_i,b_j)$. Thus, $\sign(w_\pi) = 1$, and we have $\sign(w) = S(w,t) s(w,t)$ or, equivalently,  $S(w,t) = s(w,t) \sign(w)$,  for each $t$ such that $wtw^{-1} = t$.  It follows that the characters of $w$ on $\M_{\S_k}^f$ and $\overline{\W}_{\S_k}^f \otimes \S_k^{\sign}$ are equal and  the modules are isomorphic. \end{proof}

\end{subsection}

\end{section}

\begin{section}{Gelfand Models from  the Jones Basic Construction}

In this section we show how to construct model representations for a tower of algebras $(\A_k)_{k \ge 0}$ that is obtained by repeated Jones basic constructions (\cite{GHJ}, \cite{Jo1}, \cite{GG}) from a tower of algebras $(\C_k)_{k \ge0}$ for which we already have a model representation. Each of the diagram algebras of interest in this paper has such an inductive structure with $\C_k \cong \CC \S_k$, in the case of the nonplanar algebras, and $\C_k \cong \CC {\bf 1}$, in the case of the planar algebras.  We lift Saxl's model for $\S_k$ in the first case and the trivial model in the second.  

\begin{subsection}{The Jones basic construction}
\label{sec:JBC}

Let $\A_0 \subseteq \A_1 \subseteq \A_2 \subseteq \cdots $ be a tower of inclusions of finite-dimensional, semisimple algebras with 1 over the algebraically closed field $\CC$.   Assume that  there exist elements $0 \not= \e_k \in \A_{k+1}$, and $k' < k$, which satisfy the following relations \begin{enumerate}
\item[(a)] $\e_k^2 = \e_k$,
\item[(b)] $\e_k a = a \e_k$, for all $a \in \A_{k'}$,
\item[(c)] $\A_{k'} \cong \A_{k'} \e_k \cong \e_k \A_k \e_k$ via the map $a \mapsto a \e_k$ for all $a \in \A_k$.
\end{enumerate}
In the examples in this paper  $k' = k-1$ or $k' = k-2$. Define the map $\varepsilon_k: \A_k \to \A_{k'}$, called the conditional expectation, such that $\varepsilon_k(b)$ is the unique element in $\A_{k'}$ such that $\e_k b \e_k = \varepsilon_k(b) \e_k$. Let $A_k \e_k$ be a module for $\A_k \e_k \A_k$  by multiplication on the left and a module for $\e_k \A_k \e_k$ by multiplication on the right. Wenzl  \cite{Wz} proves the following (see also \cite[Theorem 2.6]{HR1}, \cite[Prop. 5.1.3]{Ha1}),  
\begin{equation}\label{SWDuality}
\J_k := \A_k \e_k \A_k \cong \End_{\e_k \A_k \e_k}(\A_k \e_k) \qquad\hbox{and}\qquad \A_{k'} \cong \e_k \A_k \e_k \cong \End_{ \A_k \e_k \A_k}(\A_k \e_k).
\end{equation}

Since $\J_k = \A_k \e_k \A_k \cong  \End_{\e_k \A_k \e_k}(\A_k \e_k)$ it is an ideal and a semisimple subalgebra (with unit $\e_k$).  Thus, there exists a subalgebra $\C_k \subseteq \A_k$ such that
\begin{equation} \label{ideal}
\A_k = \J_k \oplus \C_k \qquad\hbox{and}\qquad \A_k/\J_k \cong \C_k.
\end{equation}
Let $\Lambda_{\A_k}$ index the irreducible $\A_k$-modules. It follows (from double centralizer theory) that the irreducible modules for $\A_{k'}$ and $\J_k$ are indexed by the same set $\Lambda_{\A_{k'}}$.  
Thus by \eqref{ideal}, we have
\begin{equation}\label{inducctivereplabels}
\Lambda_{\A_k} = \Lambda_{\J_{k}} \sqcup \Lambda_{\C_k} = \Lambda_{\A_{k'}} \sqcup \Lambda_{\C_k}.
\end{equation}
Applying \eqref{inducctivereplabels} recursively gives
$
\Lambda_{\A_k} = \Lambda_{\C_0} \sqcup \Lambda_{\C_1} \sqcup \Lambda_{\C_2} \sqcup \cdots \sqcup \Lambda_{\C_k},
$
where for some values of $i$ we may have $\C_i = 0$ and $\Lambda_{\C_i} = \emptyset$ (see the examples in  \ref{ModelDiags}).  

If $\chi$ is any character of $\A_k$, then by \eqref{ideal}, $\chi$ is completely determined by its values on $\J_{k}$ and $\C_k$.  If $a \in \J_{k} = \A_k \e_k \A_k$, then $a = a_1 \e_k a_2$ for $a_1, a_2 \in \A_k,$ and by the trace property of $\chi$,
$$
\chi(a) = \chi(a_1 \e_k a_2)  = \chi(a_2 a_1 \e_k) =  \chi(a_2 a_1 \e_k^2) =   \chi(\e_k a_2 a_1 \e_k)
=   \chi(\varepsilon(a_2 a_1) \e_k).
$$
Thus, character values on $\J_k$ are completely determined by their values on $a \e_k$ for $a \in \A_{k'}$, and
\begin{equation}\label{CharacterSufficiency}
\begin{array}{l}
\text{\sl Characters of $\A_k$ are completely determined by their values on}\\
\text{\sl $b \in \C_k$ and $a \e_k$, for $a \in \A_{k'}$.}\\
\end{array}
\end{equation}

The following result is proved in \cite{HR1} and \cite{Ha1} for  algebras $\A_k$ satisfying (a), (b), (c) above  with quotient $\C_k$ defined as in \eqref{ideal}. If $\lambda \in \Lambda_{\A_k}$, then
\begin{equation}\label{JBCIrreducibleBlockDiagonal}
\chi_{\A_k}^\lambda(a) = 
\begin{cases}
\chi_{\C_k}^\lambda(a), & \text{if $\lambda \in \Lambda_{\C_k}$ and $a \in \C_k$,} \\
0, & \text{if $\lambda \in \Lambda_{\C_k}$ and  $a \in \J_k$,} \\
 \chi_{\A_{k'}}^\lambda(a'), & \text{if $\lambda \in \Lambda_{\A_{k'}}$ and $a = a' \e_k$ with $a' \in \A_{k'}$.} \\
\end{cases}
\end{equation}
The  character values, $\chi^\lambda_{\A_k}(a)$ for $\lambda \in \Lambda_{\A_{k'}}$ and $a \in \C_k$, are harder to compute but   not needed here.

Now  assume that we have a model representation $\M_{\C_r}$ of $\C_r$, for each $0 \le r \le  k$, with corresponding character $\varphi_{\C_k}$. Thus,
\begin{equation}\label{CModel}
\M_{\C_r} \cong \bigoplus_{\lambda \in \Lambda_{\C_r}} \C^\lambda_r\qquad\hbox{and}\qquad
\varphi_{\C_r} = \sum_{\lambda \in \Lambda_{\C_r}} \chi^\lambda_{\C_r},
\end{equation}
where $\{\C_r^\lambda\ \vert\ \lambda \in \Lambda_{\C_r}\}$ is the set of the irreducible $\C_r$-modules with characters $\chi^\lambda_{\C_r}, \lambda \in \Lambda_{\C_r}$.
Suppose further that we have constructed a module $\M_{\A_k}$  for $\A_k$ which decomposes into $\A_k$-submodules $\M_{\A_k} = \bigoplus_{r = 0}^k \M_{\A_k}^r$ satisfying

\begin{enumerate}

\item[(M1)] $\M_{\A_k}^r$ and $\M_{\A_k}^s$ have no common irreducible constituents if $r \not= s$, and

\item[(M2)] The character  $\varphi_{\A_k}^r$ of $\M_{\A_k}^r$ satisfies
\begin{equation}\label{JBCModelBlockDiagonal}
\varphi_{\A_k}^r(a) = 
\begin{cases}
\varphi_{\C_k}(a), & \text{if $r=k$ and $a \in \C_k$,} \\
0, & \text{if $r=k$ and  $a \in \J_k$,} \\
 \varphi_{\A_{k}'}^r(a'), & \text{if $r<k$ and $a = a' \e_k$ with $a' \in \A_{k-1}$.} \\
\end{cases}
\end{equation}
\end{enumerate}
Then the following theorem tells us that $\M_{\A_k}$ is a model for $\A_k$.

\begin{thm} \label{JBCModelCharacters} Let $\A_0 \subseteq \A_1 \subseteq \cdots \subseteq \A_k$ be a tower of semisimple algebras obtained by Jones basic constructions 
from $\C_0 \subseteq \C_1 \subseteq \cdots \subseteq \C_k$.  If $\M_k =  \bigoplus_{r = 0}^k \M_{\A_k}^r$ is an $\A_k$ module satisfying (M1) and (M2), then
 $$
\displaystyle{\M_{\A_k}^{r} = \bigoplus_{\lambda \in \Lambda_{\C_r}}  \A_k^\lambda,}
\qquad\hbox{and thus}\qquad
\displaystyle{\M_{\A_k} = \bigoplus_{r=0}^k  \M_{\A_k}^{r} =  \bigoplus_{\lambda \in \Lambda_{\A_k}}  \M_{\A_k}^\lambda.}
$$
\end{thm}

\begin{proof} We prove this on the character level; namely, we show that $\varphi_{\A_k}^{r} = \sum_{\lambda \in \Lambda_{\C_r}}  \chi_{\A_k}^\lambda$ and 
$\varphi_{\A_k} = \sum_{r=0}^k  \varphi_{\A_k}^{r} =  \sum_{\lambda \in \Lambda_{\A_k}}  \chi_{\A_k}^\lambda$
(the second statement follows immediately from the first).
Our proof is by induction on $k$, with $k=0$ being trivial.  Let $k > 0$ and first consider the situation where $r = k$.
Using \eqref{JBCModelBlockDiagonal},  \eqref{CModel}, and \eqref{JBCIrreducibleBlockDiagonal} we have
\begin{align*}
\varphi_{\A_k}^{k}(a) &= \varphi_{\C_k}(a) = \sum_{\lambda \in \Lambda_{\C_k}} \chi_{\C_k}^\lambda(a) = \sum_{\lambda \in \Lambda_{\C_k}} \chi_{\A_k}^\lambda(a),
\qquad\hbox{for all $a \in \C_k$,\ and } \\
\varphi_{\A_k}^{k}(a) & =0= \sum_{\lambda \in \Lambda_{\C_k}} \chi_{\A_k}^\lambda(a), \hskip1.7in \hbox{for all $a \in \J_k$.}
\end{align*}
Since  characters of $\A_k$ are completely determined by their values on $\C_k$ and $\J_k$, we have that $\varphi_{\A_k}^{k} = \sum_{\lambda \in \Lambda_{\C_k}} \chi_{\A_k}^\lambda$ is the decomposition of $\varphi_{\A_k}^{k}$ into irreducible characters.

Now let $r < k$.  The previous paragraph and   (M1) tell us that the decomposition of $\varphi_{\A_k}^{r}$ into irreducibles does not involve 
any  $\chi^\lambda_{\A_k}$ with $\lambda \in \Lambda_{\C_k}$.  Thus by \eqref{inducctivereplabels} it is of the form  
\begin{equation}\label{inductivesum}
\varphi_{\A_k}^r =    \sum_{\lambda \in \Lambda_{\A_{k'}}}   a_\lambda  \chi_{\A_{k}}^\lambda,
\qquad\hbox{ for some $a_\lambda \in \ZZ_{\ge0}$.}
\end{equation}
Let $a = a'  \e_k$ with $a' \in \A_{k'}$. For each $\lambda \in \Lambda_{\A_{k'}}$ we have $\chi_{\A_{k}}^\lambda(a) = \chi_{\A_{k'}}^\lambda(a')$ by \eqref{JBCIrreducibleBlockDiagonal}, and thus $\varphi_{\A_k}^r(a) = \varphi_{\A_{k'}}^r(a')$ by \eqref{inductivesum}.  Furthermore, 
we can apply the inductive hypothesis since $k'< k$, 
$$
\varphi_{\A_k}^{r}(a)  =    \varphi_{\A_{k'}}^{r} (a') 
=   \sum_{\lambda \in \Lambda_{\C_r}}   \chi_{\A_{k'}}^\lambda (a')
=   \sum_{\lambda \in \Lambda_{\C_r}}   \chi_{\A_{k}}^\lambda (a).
$$
Thus,  $a_\lambda = 1$ for $\lambda \in \Lambda_{\C_r}$ and $a_\lambda = 0$, otherwise, as desired.\end{proof}

\end{subsection}

\begin{subsection}{Jones basic construction for subalgebras of the partition algebra}

Let $\A_k$ be the partition algebra or one of its subalgebras described in  \ref{sec:subalgebras} with   $x \in \CC$ chosen such that $\A_k$ is semisimple.  Let $\AA_k$ be the basis of diagrams which span $\A_k$.  There is a natural embedding
$\A_{k-1} \subseteq \A_k,$  given by placing an identity edge to the right of any diagram  $d \in \A_{k-1}$, i.e., 
$
\begin{array}{c}
\begin{tikzpicture}[scale=.4,line width=1pt] 
\foreach \i in {1,...,4} 
{ \path (\i,.4) coordinate (T\i); \path (\i,-.4) coordinate (B\i); } 
\filldraw[fill= black!10,draw=black!10,line width=4pt]  (T1) -- (T4) -- (B4) -- (B1) -- (T1) -- (T2);
\draw (2.5,0) node {$d$};
\end{tikzpicture}\end{array}
\mapsto
\begin{array}{c}
\begin{tikzpicture}[scale=.4,line width=1pt] 
\foreach \i in {1,...,5} 
{ \path (\i,.4) coordinate (T\i); \path (\i,-.4) coordinate (B\i); } 
\filldraw[fill= black!10,draw=black!10,line width=4pt]  (T1) -- (T4) -- (B4) -- (B1) -- (T1) -- (T2);
\draw[black] (T5) -- (B5);
\foreach \i in {5} 
{ \fill (T\i) circle (4pt); \fill (B\i) circle (4pt); } 
\draw (2.5,0) node {$d$};
\end{tikzpicture}\end{array}.
$ Let $\J_{k} \subseteq \A_k$ be the ideal spanned by the diagrams of $\A_k$ having rank less than $k$.  Then,
\begin{equation}\label{BasicConstruction}
\A_k \cong \J_{k} \oplus \C_k,
\end{equation}
where $\C_k$ is the span of the diagrams of rank exactly equal to $k$.  In the examples of this paper,
\begin{equation}
\begin{array}{lll}
\C_k \cong \CC \S_k, & & \hbox{when $\A_k$ is one of the nonplanar algebras  $\P_k(x), \B_k(x), \RB_k(x)$ or  $\R_k$}, \\
\C_k \cong \CC{\bf 1}_k, && \hbox{when $\A_k$  is one of the planar algebras $\TL_k(x),  \Motz_k(x),$ or $\PR_k$.} \\
\end{array}
\end{equation}

Define an idempotent $\e_k \in \J_{k}$ by
\begin{equation}\label{essential}
\begin{array}{rclll}
 \e_k &=& \frac{1}{x}
\begin{array}{c}
\begin{tikzpicture}[scale=.5,line width=1pt] 
\foreach \i in {1,...,6} 
{ \path (\i,.5) coordinate (T\i); \path (\i,-.5) coordinate (B\i); } 
\filldraw[fill= black!10,draw=black!10,line width=4pt]  (T1) -- (T6) -- (B6) -- (B1) -- (T1);
\draw[black] (T1) -- (B1);
\draw[black] (T2) -- (B2);
\draw[black] (T4) -- (B4);
\draw[black] (T5) -- (B5);
\foreach \i in {1,2,4,5,6} 
{ \fill (T\i) circle (4pt); \fill (B\i) circle (4pt); }
\draw  (T1)  node[above=0.1cm]{$\scriptstyle{1}$};
\draw  (T2)  node[above=0.1cm]{$\scriptstyle{2}$};
\draw  (T3)  node[]{$\cdots$};\draw  (B3)  node[]{$\cdots$};
\draw  (T5)  node[above=0.1cm]{$\scriptstyle{k-1}$};
\draw  (T6)  node[above=0.1cm]{$\scriptstyle{k}$};
\end{tikzpicture} 
\end{array}
, & & \text{for $\A_k$ equal to $\P_k(x), \RB_k(x), \R_k, \Motz_k(x),$ or  $\PR_k$,} \\
\e_k &=& \frac{1}{x} \begin{array}{c}
\begin{tikzpicture}[scale=.5,line width=1pt] 
\foreach \i in {1,...,6} 
{ \path (\i,.5) coordinate (T\i); \path (\i,-.5) coordinate (B\i); } 
\filldraw[fill= black!10,draw=black!10,line width=4pt]  (T1) -- (T6) -- (B6) -- (B1) -- (T1);
\draw[black] (T1) -- (B1);
\draw[black] (T2) -- (B2);
\draw[black] (T4) -- (B4);
\draw[black] (T5) ..controls +(.15,-.5) and +(-.15,-.5) .. (T6);
\draw[black] (B5) ..controls +(.15,.5) and +(-.15,.5) .. (B6);
\foreach \i in {1,2,4,5,6} 
{ \fill (T\i) circle (4pt); \fill (B\i) circle (4pt); }
\draw  (T1)  node[above=0.1cm]{$\scriptstyle{1}$};
\draw  (T2)  node[above=0.1cm]{$\scriptstyle{2}$};
\draw  (T3)  node[]{$\cdots$};\draw  (B3)  node[]{$\cdots$};
\draw  (T5)  node[above=0.1cm]{$\scriptstyle{k-1}$};
\draw  (T6)  node[above=0.1cm]{$\scriptstyle{k}$};
\end{tikzpicture} 
\end{array}, & & \text{for $\A_k$ equal to $\B_k(x)$ or $\TL_k(x)$.} 
\end{array}
\end{equation}
Recall that in the special cases where $\A_k = \R_k$ or $\A_k=\PR_k$ we have $x = 1$.
It is easy to verify by diagram multiplication that 
\begin{equation}
\begin{array}{rclll}
\J_{k} &= &  \A_k \e_k \A_k,  & & \text{for $\A_k$ equal to $\P_k(x), \RB_k(x), \R_k, \Motz_k(x),$ or  $\PR_k$,} \\
\J_{k-1} = \J_{k} &= & \A_k \e_k \A_k,  & & \text{for $\A_k$ equal to $\B_k(x)$ or $\TL_k(x)$.} 
\end{array}
\end{equation}
Define $k' = k-1$ or $k' = k-2$ so that
\begin{equation}\label{DerivedAlgebra}
\A_{k'}  = \begin{cases}
\A_{k-1}, & \text{if $\A_k$ equals $\P_k(x), \RB_k(x), \R_k, \Motz_k(x),$ or  $\PR_k$,} \\
\A_{k-2}, & \text{if $\A_k$ equals  $\B_k(x)$ or $\TL_k(x)$.}
\end{cases} 
\end{equation}

In each of our examples it is well-known, and straight-forward to verify, that $\A_k$ satisfies  properties (a), (b), (c) of the basic construction in \ref{sec:JBC} using the idempotent $\e_k$. Thus,  $\Lambda_{\A_k} = \Lambda_{\J_{k-1}} \sqcup \Lambda_{\C_k}  = \Lambda_{\A_k'} \sqcup \Lambda_{\C_k}$, and by induction 
 \begin{equation} \label{IrreducibleLabels2}
\begin{array}{rclll}
 \Lambda_{\A_k}  &= & \bigsqcup_{r = 0}^k \Lambda_{\C_r},  & & \text{for $\A_k$ equal to $\P_k(x), \RB_k(x), \R_k, \Motz_k(x),$ or  $\PR_k$,} \\
 \Lambda_{\A_k}  &= & \bigsqcup_{\ell = 0}^{\lfloor k/2 \rfloor} \Lambda_{\C_{k-2 \ell}},   & & \text{for $\A_k$ equal to $\B_k(x)$ or $\TL_k(x)$.} 
\end{array}
\end{equation}
\end{subsection}

\begin{subsection}{Symmetric diagrams and diagram conjugation}\label{SymmetricDiagramSection}

 For any  diagram $d \in \AA_k$,  let $d^T \in \AA_k$  be the diagram obtained by reflecting $d$ over its horizontal axis.  For example, 
$$
\begin{array}{lcl}
d_1=\begin{array}{c}
\begin{tikzpicture}[scale=.4,line width=1pt] 
\foreach \i in {1,...,10} 
{ \path (\i,1) coordinate (T\i); \path (\i,-1) coordinate (B\i); } 
\filldraw[fill= black!10,draw=black!10,line width=4pt]  (T1) -- (T10) -- (B10) -- (B1) -- (T1);
\draw[black] (T1) -- (B3).. controls +(.1,.75) and +(-.1,.75) .. (B4);
\draw[black] (B1)--(T2) ..controls +(.15,-.5) and +(-.15,-.5) .. (T3);
\draw[black] (B1) .. controls +(.1,.5) and +(-.1,.5) .. (B2);
\draw[black] (T4) -- (B6);
\draw[black] (T5) -- (B5);
\draw[black] (T6)..controls +(.1,-.75) and +(-.1,-.75) .. (T7);
\draw[black] (T9) -- (B10);
\draw[black] (T10) -- (B8);
\draw[black]  (B7).. controls +(.1,.75) and +(-.1,.75) .. (B9);
\foreach \i in {1,...,10} 
{ \fill (T\i) circle (4pt); \fill (B\i) circle (4pt); }
\end{tikzpicture}\end{array}
& \Rightarrow &
d_1^T=\begin{array}{c}
\begin{tikzpicture}[scale=.4,line width=1pt] 
\foreach \i in {1,...,10} 
{ \path (\i,1) coordinate (T\i); \path (\i,-1) coordinate (B\i); } 
\filldraw[fill= black!10,draw=black!10,line width=4pt]  (T1) -- (T10) -- (B10) -- (B1) -- (T1);
\draw[black] (B1) -- (T3).. controls +(.1,-.75) and +(-.1,-.75) .. (T4);
\draw[black] (T1)--(B2) ..controls +(.15,.5) and +(-.15,.5) .. (B3);
\draw[black] (T1) .. controls +(.1,-.5) and +(-.1,-.5) .. (T2);
\draw[black] (B4) -- (T6);
\draw[black] (B5) -- (T5);
\draw[black] (B6)..controls +(.1,.75) and +(-.1,.75) .. (B7);
\draw[black] (B9) -- (T10);
\draw[black] (B10) -- (T8);
\draw[black]  (T7).. controls +(.1,-.75) and +(-.1,-.75) .. (T9);
\foreach \i in {1,...,10} 
{ \fill (T\i) circle (4pt); \fill (B\i) circle (4pt); }
\end{tikzpicture}\end{array},  \\
d_2 = \begin{array}{c}
\begin{tikzpicture}[scale=.4,line width=1pt] 
\foreach \i in {1,...,10} 
{ \path (\i,1) coordinate (T\i); \path (\i,-1) coordinate (B\i); } 
\filldraw[fill= black!10,draw=black!10,line width=4pt]  (T1) -- (T10) -- (B10) -- (B1) -- (T1);
\draw[black] (T1) -- (B3);
\draw[black] (T2) -- (B4);
\draw[black] (T3) -- (B1);
\draw[black] (B2)--(T4);
\draw[black] (T6)--(B6);
\draw[black] (T4)  .. controls +(.1,-.75) and +(-.1,-.75) ..  (T5);
\draw[black] (B4) .. controls +(.1,.75) and +(-.1,.75) .. (B5);
\draw[black] (T6) .. controls +(.1,-.5) and +(-.1,-.75) .. (T7) ;
\draw[black] (B6) .. controls +(.1,.5) and +(-.1,.75) .. (B7) ;
\draw[black] (T7)  .. controls +(.1,-.75) and +(-.1,-.75) ..  (T10);
\draw[black] (B7) .. controls +(.1,.75) and +(-.1,.75) .. (B10) ;
\draw[black] (T8) -- (B8);
\foreach \i in {1,...,10} 
{ \fill (T\i) circle (4pt); \fill (B\i) circle (4pt); }
\end{tikzpicture} 
\end{array} 
& \Rightarrow &
d_2^T=\begin{array}{c}
\begin{tikzpicture}[scale=.4,line width=1pt] 
\foreach \i in {1,...,10} 
{ \path (\i,1) coordinate (T\i); \path (\i,-1) coordinate (B\i); } 
\filldraw[fill= black!10,draw=black!10,line width=4pt]  (T1) -- (T10) -- (B10) -- (B1) -- (T1);
\draw[black] (T1) -- (B3);
\draw[black] (T2) -- (B4);
\draw[black] (T3) -- (B1);
\draw[black] (B2)--(T4);
\draw[black] (T6)--(B6);
\draw[black] (T4)  .. controls +(.1,-.75) and +(-.1,-.75) ..  (T5);
\draw[black] (B4) .. controls +(.1,.75) and +(-.1,.75) .. (B5);
\draw[black] (T6) .. controls +(.1,-.5) and +(-.1,-.75) .. (T7) ;
\draw[black] (B6) .. controls +(.1,.5) and +(-.1,.75) .. (B7) ;
\draw[black] (T7)  .. controls +(.1,-.75) and +(-.1,-.75) ..  (T10);
\draw[black] (B7) .. controls +(.1,.75) and +(-.1,.75) .. (B10) ;
\draw[black] (T8) -- (B8);
\foreach \i in {1,...,10} 
{ \fill (T\i) circle (4pt); \fill (B\i) circle (4pt); }
\end{tikzpicture} 
\end{array}.
\end{array}
$$
Note that the map $d \to d^T$ corresponds to exchanging $i \leftrightarrow i'$ for all $i$.

We say that a diagram $d$ is \emph{symmetric} if $d^T = d$.  In our example above, $d_2$ is symmetric and $d_1$ is not.  If we let $(i')' = i$ and let $B' = \{\, b' \, \vert \, b \in B\, \}$ for a block $B$ of a partition diagram $d$, then $d$ is symmetric if it satisfies: $B \in d$ if and only if $B' \in d$.   
 If $d$ is a partition diagram, then we say that a block $B \in d$ is a \emph{fixed block} if $B'=B$.  In our above examples, $d_1$ has one fixed block, $\{5,5'\}$, and $d_2$ has two fixed blocks, $\{8,8'\}$ and $\{6,7,10,6',7',10'\}$.
Note that 
$$
(ab)^T = b^T a^T \qquad\hbox{and}\qquad (d t d^T)^T = (d^T)^T t^T d^T = d t^T d^T,
 $$
so $t$ is symmetric if and only if $d t d^T$ is symmetric.  We say that $d t d^T$ is the \emph{conjugate} of $t$ by $d$.  See Example \ref{ConjugateExample}.  Observe that in $\P_k(x)$ the sizes of the blocks can change  under conjugation.

\begin{rem}\label{TypeB}
The symmetric diagrams in the partition algebra are referred to as type-$B$ set partitions in \cite{OEIS}   \href{http://oeis.org/A002872}{A002872}.  They appear in \cite{Mo} and  are equivalent to the horizontally symmetric $2 \times n$ letter arrays $H_{2,n}$ in \cite{Qu}. They are closely related to the type-$B$ set partitions used in \cite{Re}, except that in  \cite{Re} the partitions are restricted to have  at most one fixed block.
\end{rem}

\begin{rem} If we restrict our diagrams to the symmetric group, then $d^T$ equals $d^{-1}$, diagram conjugation corresponds to usual group conjugation,  symmetric diagrams are involutions, and fixed blocks are fixed points.
\end{rem}

\begin{examp}\label{ConjugateExample}
\begin{enumerate}[(a)]
\item Conjugation in the partition algebra $\P_k(x)$. 
$$
\begin{array}{rc}
 d=\begin{array}{c}\begin{tikzpicture}[scale=.4,line width=1pt] 
\foreach \i in {1,...,12} 
{ \path (\i,1.25) coordinate (T\i); \path (\i,-1.25) coordinate (B\i); } 
\filldraw[fill= black!10,draw=black!10,line width=4pt]  (T1) -- (T12) -- (B12) -- (B1) -- (T1);
\draw[black] (T1) ..controls +(.15,-.75) and +(-.15,-.75) ..(T3);
\draw[black] (T3)  ..controls +(.15,-.75) and +(-.15,-.75) ..(T4);
\draw[black] (T4) ..controls +(.15,-.75) and +(-.15,-.75) ..(T7);
\draw[black] (T2)  ..controls +(.15,-1.5) and +(-.15,-1.5) .. (T8);
\draw[black] (T8) ..controls +(.15,-.75) and +(-.15,-.75) .. (T9);
\draw[black]  (T10)..controls +(.15,-.75) and +(-.15,-.75) ..(T11);
\draw[black]  (T11)..controls +(.15,-.75) and +(-.15,-.75) ..(T12);
\draw[black]( T1)-- (B1);
\draw[black] (T5) -- (B4);
\draw[black] (T6) ..controls +(.15,-.25) and +(0,1) .. (B11);
\draw[black] (T10) ..controls +(.15,-.25) and +(0,1) .. (B8);
\draw[black] (B1) ..controls +(.15,1.0) and +(-.15,1.0) ..(B3)  ;
\draw[black] (B3) ..controls +(.15,1.0) and +(-.15,1.0) ..(B5) ;
\draw[black] (B4)  ..controls +(.15,.5) and +(-.15,.5) .. (B6);
\draw[black] (B8) ..controls +(.15,.75) and +(-.15,.75) .. (B10);
\draw[black]  (B11)..controls +(.15,.75) and +(-.15,.75) ..(B12);
\draw[black] (B7) --(7,-.2)--(9,-.2)-- (B9);
\foreach \i in {1,...,12} 
{ \fill (T\i) circle (4pt); \fill (B\i) circle (4pt); } 
\end{tikzpicture} \end{array}& \\
s= \begin{array}{c}\begin{tikzpicture}[scale=.4,line width=1pt] 
\foreach \i in {1,...,12} 
{ \path (\i,1.25) coordinate (T\i); \path (\i,-1.25) coordinate (B\i); } 
\filldraw[fill= blue!10,draw=blue!10,line width=4pt]  (T1) -- (T12) -- (B12) -- (B1) -- (T1);
\draw[black] (T1) ..controls +(.15,-1.0) and +(-.15,-1.0) ..(T3)  ;
\draw[black] (T3) ..controls +(.15,-1.0) and +(-.15,-1.0) ..(T5) ;
\draw[black] (T4)  ..controls +(.15,-.5) and +(-.15,-.5) .. (T6);
\draw[black] (T8) ..controls +(.15,-.75) and +(-.15,-.75) .. (T10);
\draw[black]  (T11)..controls +(.15,-.75) and +(-.15,-.75) ..(T12);
\draw[black] (T7) --(7,.2)--(9,.2)-- (T9);
\draw[black] (T1)-- (B1);
\draw[black] (T4) ..controls +(0,-1.0) and +(-.15,1.0) .. (B8);
\draw[black] (B4) ..controls +(0,1.0) and +(-.15,-1.0) .. (T8);
\draw[black] (T11) -- (B11);
\draw[black] (B1) ..controls +(.15,1.0) and +(-.15,1.0) ..(B3)  ;
\draw[black] (B3) ..controls +(.15,1.0) and +(-.15,1.0) ..(B5) ;
\draw[black] (B4)  ..controls +(.15,.5) and +(-.15,.5) .. (B6);
\draw[black] (B8) ..controls +(.15,.75) and +(-.15,.75) .. (B10);
\draw[black]  (B11)..controls +(.15,.75) and +(-.15,.75) ..(B12);
\draw[black] (B7) --(7,-.2)--(9,-.2)-- (B9);
\foreach \i in {1,...,12} 
{ \fill (T\i) circle (4pt); \fill (B\i) circle (4pt); } 
\end{tikzpicture} \end{array} &
=\begin{array}{c} \begin{tikzpicture}[scale=.4,line width=1pt] 
\foreach \i in {1,...,12} 
{ \path (\i,1.25) coordinate (T\i); \path (\i,-1.25) coordinate (B\i); } 
\filldraw[fill= blue!10,draw=blue!10,line width=4pt]  (T1) -- (T12) -- (B12) -- (B1) -- (T1);
\draw[black] (T1) ..controls +(.15,-.75) and +(-.15,-.75) ..(T3);
\draw[black] (T3)  ..controls +(.15,-.75) and +(-.15,-.75) ..(T4);
\draw[black] (T4) ..controls +(.15,-.75) and +(-.15,-.75) ..(T7);
\draw[black] (T2)  ..controls +(.15,-1.5) and +(-.15,-1.5) .. (T8);
\draw[black] (T8) ..controls +(.15,-.75) and +(-.15,-.75) .. (T9);
\draw[black]  (T10)..controls +(.15,-.75) and +(-.15,-.75) ..(T11);
\draw[black]  (T11)..controls +(.15,-.75) and +(-.15,-.75) ..(T12);
\draw[black]( T1)-- (B1);
\draw[black]( T6)-- (B6);
\draw[black]( T5)  ..controls +(0,-1.2) and +(0,1.2) ..   (B10);
\draw[black]( T10) ..controls +(0,-1.2) and +(0,1.2) .. (B5);
\draw[black] (B1) ..controls +(.15,.75) and +(-.15,.75) ..(B3);
\draw[black] (B3)  ..controls +(.15,.75) and +(-.15,.75) ..(B4);
\draw[black] (B4) ..controls +(.15,.75) and +(-.15,.75) ..(B7);
\draw[black] (B2)  ..controls +(.15,1.5) and +(-.15,1.5) .. (B8);
\draw[black] (B8) ..controls +(.15,.75) and +(-.15,.75) .. (B9);
\draw[black]  (B10)..controls +(.15,.75) and +(-.15,.75) ..(B11);
\draw[black]  (B11)..controls +(.15,.75) and +(-.15,.75) ..(B12);
\foreach \i in {1,...,12} 
{ \fill (T\i) circle (4pt); \fill (B\i) circle (4pt); } 
\end{tikzpicture} \end{array}\\
d^T = \begin{array}{c}\begin{tikzpicture}[scale=.4,line width=1pt] 
\foreach \i in {1,...,12} 
{ \path (\i,1.25) coordinate (T\i); \path (\i,-1.25) coordinate (B\i); } 
\filldraw[fill= black!10,draw=black!10,line width=4pt]  (T1) -- (T12) -- (B12) -- (B1) -- (T1);
\draw[black] (T1) ..controls +(.15,-1.0) and +(-.15,-1.0) ..(T3)  ;
\draw[black] (T3) ..controls +(.15,-1.0) and +(-.15,-1.0) ..(T5) ;
\draw[black] (T4)  ..controls +(.15,-.5) and +(-.15,-.5) .. (T6);
\draw[black] (T8) ..controls +(.15,-.75) and +(-.15,-.75) .. (T10);
\draw[black]  (T11)..controls +(.15,-.75) and +(-.15,-.75) ..(T12);
\draw[black] (T7) --(7,.2)--(9,.2)-- (T9);
\draw[black]( T1)-- (B1);
\draw[black] (T4) -- (B5);
\draw[black] (T11) ..controls +(0,-1) and +( .15,.25) .. (B6);
\draw[black] (T8) ..controls +(0,-1) and +( .15,.25) .. (B10);
\draw[black] (B1) ..controls +(.15,.75) and +(-.15,.75) ..(B3);
\draw[black] (B3)  ..controls +(.15,.75) and +(-.15,.75) ..(B4);
\draw[black] (B4) ..controls +(.15,.75) and +(-.15,.75) ..(B7);
\draw[black] (B2)  ..controls +(.15,1.5) and +(-.15,1.5) .. (B8);
\draw[black] (B8) ..controls +(.15,.75) and +(-.15,.75) .. (B9);
\draw[black]  (B10)..controls +(.15,.75) and +(-.15,.75) ..(B11);
\draw[black]  (B11)..controls +(.15,.75) and +(-.15,.75) ..(B12);
\foreach \i in {1,...,12} 
{ \fill (T\i) circle (4pt); \fill (B\i) circle (4pt); } 
\end{tikzpicture} \end{array}& \end{array}
$$

\item   Conjugation in the Brauer algebra  $\B_k(x)$:
$$
\begin{array}{r}
d =  \begin{array}{c}
\begin{tikzpicture}[scale=.4,line width=1pt] 
\foreach \i in {1,...,14} 
{ \path (\i,1) coordinate (T\i); \path (\i,-1) coordinate (B\i); } 
\filldraw[fill= black!10,draw=black!10,line width=4pt]  (T1) -- (T14) -- (B14) -- (B1) -- (T1);
\draw[black] (T3)  .. controls +(.1,-.5) and +(-.1,-.5) ..  (T4);
\draw[black] (T2)  .. controls +(.1,-1) and +(-.1,-1) ..  (T8);
\draw[black] (T10)  .. controls +(.1,-.75) and +(-.1,-.75) ..  (T13);
\draw[black] (T1) -- (B1);
\draw[black] (T5) -- (B3);
\draw[black] (T6)   .. controls +(0,-.5) and +(0,+.5) ..  (B13);
\draw[black] (T7) -- (B2);
\draw[black] (T11) -- (B5);
\draw[black] (T9) -- (B10);
\draw[black] (T12) -- (B6);
\draw[black] (T14) -- (B9);
\draw[black] (B4)  .. controls +(.1,.75) and +(-.1,.75) ..  (B7);
\draw[black] (B11)  .. controls +(.1,.75) and +(-.1,.75) ..  (B14);
\draw[black] (B8)  .. controls +(.1,.75) and +(-.1,.75) ..  (B12);
\foreach \i in {1,...,14} 
{ \fill (T\i) circle (4pt); \fill (B\i) circle (4pt); }
\end{tikzpicture} 
\end{array}\\
s =  \begin{array}{c} \begin{tikzpicture}[scale=.4,line width=1pt] 
\foreach \i in {1,...,14} 
{ \path (\i,1) coordinate (T\i); \path (\i,-1) coordinate (B\i); } 
\filldraw[fill= blue!10,draw=blue!10,line width=4pt]  (T1) -- (T14) -- (B14) -- (B1) -- (T1);
\draw[black] (T4)  .. controls +(.1,-.75) and +(-.1,-.75) ..  (T7);
\draw[black] (T11)  .. controls +(.1,-.75) and +(-.1,-.75) ..  (T14);
\draw[black] (T8)  .. controls +(.1,-.75) and +(-.1,-.75) ..  (T12);
\draw[black] (T1) -- (B3);
\draw[black] (T3) -- (B1);
\draw[black] (T2) -- (B5);
\draw[black] (B2)--(T5);
\draw[black] (T6)--(B9);
\draw[black] (T9)--(B6);
\draw[black] (T10) -- (B10);
\draw[black] (T13)--(B13);
\draw[black] (B4) .. controls +(.1,.75) and +(-.1,.75) .. (B7);
\draw[black] (B11) .. controls +(.1,.75) and +(-.1,.75) .. (B14);
\draw[black] (B8) .. controls +(.1,.75) and +(-.1,.75) .. (B12) ;
\foreach \i in {1,...,14} 
{ \fill (T\i) circle (4pt); \fill (B\i) circle (4pt); }
\end{tikzpicture}\end{array} \\
d^T =   \begin{array}{c}\begin{tikzpicture}[scale=.4,line width=1pt] 
\foreach \i in {1,...,14} 
{ \path (\i,1) coordinate (T\i); \path (\i,-1) coordinate (B\i); } 
\filldraw[fill= black!10,draw=black!10,line width=4pt]  (T1) -- (T14) -- (B14) -- (B1) -- (T1);
\draw[black] (T4)  .. controls +(.1,-.75) and +(-.1,-.75) ..  (T7);
\draw[black] (T11)  .. controls +(.1,-.75) and +(-.1,-.75) ..  (T14);
\draw[black] (T8)  .. controls +(.1,-.75) and +(-.1,-.75) ..  (T12);
\draw[black] (T1) -- (B1);
\draw[black] (T3) -- (B5);
\draw[black] (T13)   .. controls +(0,-.5) and +(0,.5) ..  (B6);
\draw[black] (T2) -- (B7);
\draw[black] (T5) -- (B11);
\draw[black] (T10) -- (B9);
\draw[black] (T6) -- (B12);
\draw[black] (T9) -- (B14);
\draw[black] (B3)  .. controls +(.1,.5) and +(-.1,.5) ..  (B4);
\draw[black] (B2) .. controls +(.1,1) and +(-.1,1) .. (B8);
\draw[black] (B10) .. controls +(.1,.75) and +(-.1,.75) .. (B13) ;
\foreach \i in {1,...,14} 
{ \fill (T\i) circle (4pt); \fill (B\i) circle (4pt); }
\end{tikzpicture} \end{array}
\end{array} = 
 \begin{array}{c}
\begin{tikzpicture}[scale=.4,line width=1pt] 
\foreach \i in {1,...,14} 
{ \path (\i,1) coordinate (T\i); \path (\i,-1) coordinate (B\i); } 
\filldraw[fill= blue!10,draw=blue!10,line width=4pt]  (T1) -- (T14) -- (B14) -- (B1) -- (T1);
\draw[black] (T3)  .. controls +(.1,-.5) and +(-.1,-.5) ..  (T4);
\draw[black] (T2)  .. controls +(.1,-1) and +(-.1,-1) ..  (T8);
\draw[black] (T10)  .. controls +(.1,-.75) and +(-.1,-.75) ..  (T13);
\draw[black] (T1) -- (B5);
\draw[black] (T5) -- (B1);
\draw[black] (T7) -- (B11);
\draw[black] (T11)--(B7);
\draw[black] (T14)--(B12);
\draw[black] (T12)--(B14);
\draw[black] (T6) -- (B6);
\draw[black] (T9)--(B9);
\draw[black] (B3)  .. controls +(.1,.5) and +(-.1,.5) ..  (B4);
\draw[black] (B2) .. controls +(.1,1) and +(-.1,1) .. (B8);
\draw[black] (B10) .. controls +(.1,.75) and +(-.1,.75) .. (B13) ;
\foreach \i in {1,...,14} 
{ \fill (T\i) circle (4pt); \fill (B\i) circle (4pt); }
\end{tikzpicture} 
\end{array}
$$

\item  Conjugation in the Temperley-Lieb algebra $\TL_k(x)$:
$$
\begin{array}{r}
d =  \begin{array}{c}
\begin{tikzpicture}[scale=.4,line width=1pt] 
\foreach \i in {1,...,14} 
{ \path (\i,1) coordinate (T\i); \path (\i,-1) coordinate (B\i); } 
\filldraw[fill= black!10,draw=black!10,line width=4pt]  (T1) -- (T14) -- (B14) -- (B1) -- (T1);
\draw[black] (T4)  .. controls +(.1,-.75) and +(-.1,-.75) ..  (T7);
\draw[black] (T5)  .. controls +(.1,-.5) and +(-.1,-.5) ..  (T6);
\draw[black] (T3)  .. controls +(.1,-1) and +(-.1,-1) ..  (T8);
\draw[black] (T10)  .. controls +(.1,-1) and +(-.1,-1) ..  (T13);
\draw[black] (T11)  .. controls +(.1,-.5) and +(-.1,-.5) ..  (T12);
\draw[black] (T1) -- (B3);
\draw[black] (T2).. controls +(0,-1.25) and +(0,1.25) .. (B10);
\draw[black] (T9) -- (B11);
\draw[black] (T14) -- (B14);
\draw[black] (B1)  .. controls +(.1,.5) and +(-.1,.5) ..  (B2);
\draw[black] (B4)  .. controls +(.1,1) and +(-.1,1) ..  (B9);
\draw[black] (B5)  .. controls +(.1,.5) and +(-.1,.5) ..  (B6);
\draw[black] (B7)  .. controls +(.1,.5) and +(-.1,.5) ..  (B8);
\draw[black] (B12)  .. controls +(.1,.75) and +(-.1,.75) ..  (B13);
\foreach \i in {1,...,14} 
{ \fill (T\i) circle (4pt); \fill (B\i) circle (4pt); }
\end{tikzpicture} 
\end{array}\\
s =  \begin{array}{c} \begin{tikzpicture}[scale=.4,line width=1pt] 
\foreach \i in {1,...,14} 
{ \path (\i,1) coordinate (T\i); \path (\i,-1) coordinate (B\i); } 
\filldraw[fill= blue!10,draw=blue!10,line width=4pt]  (T1) -- (T14) -- (B14) -- (B1) -- (T1);
\draw[black] (T1)  .. controls +(.1,-.5) and +(-.1,-.5) ..  (T2);
\draw[black] (T4)  .. controls +(.1,-1) and +(-.1,-1) ..  (T9);
\draw[black] (T5)  .. controls +(.1,-.5) and +(-.1,-.5) ..  (T6);
\draw[black] (T7)  .. controls +(.1,-.5) and +(-.1,-.5) ..  (T8);
\draw[black] (T12)  .. controls +(.1,-.75) and +(-.1,-.75) ..  (T13);
\draw[black] (T3) -- (B3);
\draw[black] (T10)--(B10);
\draw[black] (T11)--(B11);
\draw[black] (T14)--(B14);
\draw[black] (B1)  .. controls +(.1,.5) and +(-.1,.5) ..  (B2);
\draw[black] (B4)  .. controls +(.1,1) and +(-.1,1) ..  (B9);
\draw[black] (B5)  .. controls +(.1,.5) and +(-.1,.5) ..  (B6);
\draw[black] (B7)  .. controls +(.1,.5) and +(-.1,.5) ..  (B8);
\draw[black] (B12)  .. controls +(.1,.75) and +(-.1,.75) ..  (B13);
\foreach \i in {1,...,14} 
{ \fill (T\i) circle (4pt); \fill (B\i) circle (4pt); }
\end{tikzpicture}\end{array} \\
d^T =   \begin{array}{c}\begin{tikzpicture}[scale=.4,line width=1pt] 
\foreach \i in {1,...,14} 
{ \path (\i,1) coordinate (T\i); \path (\i,-1) coordinate (B\i); } 
\filldraw[fill= black!10,draw=black!10,line width=4pt]  (T1) -- (T14) -- (B14) -- (B1) -- (T1);
\draw[black] (T1)  .. controls +(.1,-.5) and +(-.1,-.5) ..  (T2);
\draw[black] (T4)  .. controls +(.1,-1) and +(-.1,-1) ..  (T9);
\draw[black] (T5)  .. controls +(.1,-.5) and +(-.1,-.5) ..  (T6);
\draw[black] (T7)  .. controls +(.1,-.5) and +(-.1,-.5) ..  (T8);
\draw[black] (T12)  .. controls +(.1,-.75) and +(-.1,-.75) ..  (T13);
\draw[black] (T3) -- (B1);
\draw[black] (T10).. controls +(0,-1.25) and +(0,1.25) .. (B2);
\draw[black] (T11) -- (B9);
\draw[black] (T14) -- (B14);
\draw[black] (B4)  .. controls +(.1,.75) and +(-.1,.75) ..  (B7);
\draw[black] (B5)  .. controls +(.1,.5) and +(-.1,.5) ..  (B6);
\draw[black] (B3)  .. controls +(.1,1) and +(-.1,1) ..  (B8);
\draw[black] (B10)  .. controls +(.1,1) and +(-.1,1) ..  (B13);
\draw[black] (B11)  .. controls +(.1,.5) and +(-.1,.5) ..  (B12);
\foreach \i in {1,...,14} 
{ \fill (T\i) circle (4pt); \fill (B\i) circle (4pt); }
\end{tikzpicture} \end{array}
\end{array} = 
 \begin{array}{c} \begin{tikzpicture}[scale=.4,line width=1pt] 
\foreach \i in {1,...,14} 
{ \path (\i,1) coordinate (T\i); \path (\i,-1) coordinate (B\i); } 
\filldraw[fill= blue!10,draw=blue!10,line width=4pt]  (T1) -- (T14) -- (B14) -- (B1) -- (T1);
\draw[black] (T4)  .. controls +(.1,-.75) and +(-.1,-.75) ..  (T7);
\draw[black] (T5)  .. controls +(.1,-.5) and +(-.1,-.5) ..  (T6);
\draw[black] (T3)  .. controls +(.1,-1) and +(-.1,-1) ..  (T8);
\draw[black] (T10)  .. controls +(.1,-1) and +(-.1,-1) ..  (T13);
\draw[black] (T11)  .. controls +(.1,-.5) and +(-.1,-.5) ..  (T12);
\draw[black] (T1) -- (B1);
\draw[black] (T2) -- (B2);
\draw[black] (T9)--(B9);
\draw[black] (T14)--(B14);
\draw[black] (B4)  .. controls +(.1,.75) and +(-.1,.75) ..  (B7);
\draw[black] (B5)  .. controls +(.1,.5) and +(-.1,.5) ..  (B6);
\draw[black] (B3)  .. controls +(.1,1) and +(-.1,1) ..  (B8);
\draw[black] (B10)  .. controls +(.1,1) and +(-.1,1) ..  (B13);
\draw[black] (B11)  .. controls +(.1,.5) and +(-.1,.5) ..  (B12);
\foreach \i in {1,...,14} 
{ \fill (T\i) circle (4pt); \fill (B\i) circle (4pt); }
\end{tikzpicture}
\end{array} 
$$
\end{enumerate}
\end{examp}

\end{subsection}

\begin{subsection}{A Gelfand model  for $\A_k$}

For any of our diagram algebras $\A_k$, let
\begin{equation}
\begin{array}{lcl}
\I_{\A_k}^{r,f} &=& \{\, d \in \AA_k \, \vert \,  \hbox{ $d$ is symmetric, $\pn(d) = r$, and $d$ has $f$ fixed blocks }\}, \\
\I_{\A_k}^{r} &=& \{\, d \in \AA_k \, \vert \,  \hbox{ $d$ is symmetric, $\pn(d) = r$ }\}, \\
\I_{\A_k} &=& \{\, d \in \AA_k \, \vert \,  \hbox{ $d$ is symmetric }\}.
\end{array}
\end{equation}
For $0 \le f \le r \le k$, define
\begin{equation}
\begin{array}{lcl}
\M_{\A_k}^{r,f} &=& \CC\text{-span}\{\, d \, \vert \, d \in \I_{\A_k}^{r,f} \, \},\\
\end{array}
\end{equation}
where  $\W_{\A_k}^{r,f}= 0$ if $\I_{\A_k}^{r,f} = \emptyset$,
and let
\begin{equation}
\begin{array}{rcl}
\M_{\A_k}^r 
&=& \CC\text{-span}\{\, d \, \vert \, d \in \I_{\A_k}^{r} \, \} ,\\
&=& \displaystyle{\bigoplus_{f=0}^r \M_{\A_k}^{r,f}, }\\
\end{array}
 \qquad\hbox{and}\qquad 
 \begin{array}{rcl}
 \M_{\A_k} 
 &=& \CC\text{-span}\{\, d \, \vert \, d \in \I_{\A_k} \, \},\\
&=& \displaystyle{\bigoplus_{r=0}^k \M_{\A_k}^{r}  = \bigoplus_{r=0}^k \bigoplus_{f=0}^r \M_{\A_k}^{r,f}. }
\end{array}
\end{equation}

If $d \in \AA_k$ and $t \in \I_{\A_k}^{r,f}$, then  $\pn( d \circ t \circ d^T) \le \pn(t)$. When $\pn( d \circ t \circ d^T) = \pn(t)$, the fixed blocks of $t$ have been permuted, and we let $S(d,t)$ be the sign of the permutation of the fixed blocks of $t$.   
For $d\in \AA_k$ and $t \in \I_{\A_k}^{r,f}$, define
\begin{equation}\label{AnAction}
d \cdot t=\begin{cases}
x^{\kappa(d,t)} S(d,t)\, d \circ t \circ d^T, & \text{if $\pn( d \circ t \circ d^T) = \pn(t)$}, \\
0, &  \text{if $\pn( d \circ t \circ d^T) < \pn(t)$}, \\
\end{cases}
\end{equation}
where $\kappa(d,t)$ is the number of blocks \eqref{PartitionAlgebraMultiplication} removed from the middle row in creating $d \circ t$.  We refer to the above action as \emph{signed conjugation} of $t$ by $d$.

\begin{examp}\label{SignedConjugation}
 {\rm (Signed Conjugation)}  In the following example, there is one block removed  in $d \circ t$ yielding a multiple of  $x$. Furthermore, the three fixed blocks of $t$ are permuted as $(B_1, B_2, B_3) \mapsto (B_3,B_2,B_1)$. Hence, the sign is $\sgn(d,t)=-1$. 
 
 $$
 \begin{array}{rccc} 
d\ = &
\begin{array}{c}\begin{tikzpicture}[scale=.4,line width=1pt] 
\foreach \i in {1,...,14} 
{ \path (\i,1.25) coordinate (T\i); \path (\i,-1.25) coordinate (B\i); } 
\filldraw[fill= black!10,draw=black!10,line width=4pt]  (T1) -- (T14) -- (B14) -- (B1) -- (T1);
\draw[black] (T3)  .. controls +(.1,-.75) and +(-.1,-.75) ..  (T2); \draw[black] (T2).. controls +(.1,-.75) and +(-.1,-.75) .. (T1); \draw[black] (T1) --(B1); \draw[black] (B1) .. controls +(.1,.75) and +(-.1,.75) ..  (B2);
\draw[black] (T6)  .. controls +(.1,-.5) and +(-.1,-.5) ..  (T4);
\draw[black] (T4) .. controls +(0,-1.3) and +(0,1.3) ..  (B12);
\draw[black] (T14)  .. controls +(.1,-.5) and +(-.1,-.5) ..  (T12); \draw[black] (T12) .. controls +(.1,-.75) and +(-.1,-.75) .. (T11); \draw[black] (T11).. controls +(.1,-.75) and +(-.1,-.75) ..(T8); \draw[black] (T8)--(B6);
\draw[black] (T7) --(B5);
\draw[black] (T13) .. controls +(0,-1.3) and +(0,-1.3) ..  (T10);
\draw[black] (T9)--(B3);
\draw[black] (B7) .. controls +(.1,.75) and +(-.1,.75) ..(B9); \draw[black] (B9) .. controls +(.1,.75) and +(-.1,.75) ..(B10); \draw[black] (B10) .. controls +(.1,.75) and +(-.1,.75) ..(B11);
\foreach \i in {1,...,14} 
{ \fill[black,draw=black]  (T\i) circle (4pt); \fill[black,draw=black]  (B\i) circle (4pt); }
\end{tikzpicture}\end{array}
&\\
t\ = &\begin{array}{c}\begin{tikzpicture}[scale=.4,line width=1pt] 
\foreach \i in {1,...,14} 
{ \path (\i,1.25) coordinate (T\i); \path (\i,-1.25) coordinate (B\i); } 
\filldraw[fill= blue!10,draw=blue!10,line width=4pt]  (T1) -- (T14) -- (B14) -- (B1) -- (T1);
\draw[black] (T1)  .. controls +(.1,-.75) and +(-.1,-.75) ..  (T2); 
\draw[black] (B1)  .. controls +(.1,.75) and +(-.1,.75) ..  (B2);
\draw[black] (T1) .. controls +(0,-1.3) and +(0,1.3) ..  (B5); 
\draw[black] (B1) .. controls +(0,1.3) and +(0,-1.3) ..  (T5); 

\draw[cyan](T10) .. controls +(.1,-1.25) and +(-.1,-1.25) ..  (T4);
 \draw[cyan] (B4) .. controls +(.1,1.25) and +(-.1,1.25) ..  (B10);
\draw[cyan] (T4) .. controls +(.1,-.75) and +(-.1,-.75) ..  (T3);
 \draw[cyan](T3)--(B3);
 \draw[cyan](B3) .. controls +(.1,.75) and +(-.1,.75) ..   (B4);

\draw[black] (B1)  .. controls +(.1,.75) and +(-.1,.75) .. (B2);

\draw[orange] (T8) .. controls +(.1,-.75) and +(-.1,-.75) ..  (T6); 
\draw[orange] (T6)--(B6); 
\draw[orange] (B6).. controls +(.1,.75) and +(-.1,.75) ..  (B8);

\draw[black] (T9) .. controls +(.1,-.75) and +(-.1,-.75) ..  (T11); 
\draw[black] (B9) .. controls +(.1,.75) and +(-.1,.75) ..   (B11);
\draw[purple] (T14) .. controls +(.1,-.75) and +(-.1,-.75) .. (T12); \draw[purple] (T12)--(B12); \draw[purple] (B12) .. controls +(.1,.75) and +(-.1,.75) ..  (B14);  
\foreach \i in {1,...,14} 
{ \fill[black,draw=black]  (T\i) circle (4pt); \fill[black,draw=black]  (B\i) circle (4pt); }
\end{tikzpicture}\end{array}
&
\!\!\!
= \ \ 
(- x) \begin{array}{c}
\begin{tikzpicture}[scale=.4,line width=1pt] 
\foreach \i in {1,...,14} 
{ \path (\i,1.25) coordinate (T\i); \path (\i,-1.25) coordinate (B\i); } 
\filldraw[fill= blue!10,draw=blue!10,line width=4pt]  (T1) -- (T14) -- (B14) -- (B1) -- (T1);
\draw[black] (T1)  .. controls +(.1,-.75) and +(-.1,-.75) ..  (T2); \draw[black](T2) .. controls +(.1,-.75) and +(-.1,-.75) ..  (T3);
\draw[black] (T1) .. controls +(0,-1.3) and +(0,1.3) ..  (B7); 
\draw[black] (B1) .. controls +(0,1.3) and +(0,-1.3) ..  (T7); 
\draw[black] (B1)  .. controls +(.1,.75) and +(-.1,.75) ..  (B2); \draw[black](B2) .. controls +(.1,.75) and +(-.1,.75) ..  (B3);
\draw[purple] (T4)  .. controls +(.1,-.5) and +(-.1,-.5) ..  (T6);
\draw[purple] (B4)  .. controls +(.1,.5) and +(-.1,.5) ..  (B6);
\draw[purple] (T4)--(B4);
\draw[cyan] (T9)--(B9);
\draw[black] (T13) .. controls +(0,-1.3) and +(0,-1.3) ..  (T10);
\draw[black] (B13) .. controls +(0,1.3) and +(0,1.3) ..  (B10);

\draw[orange] (T14) .. controls +(.1,-.75) and +(-.1,-.75) .. (T12); 
\draw[orange] (T12) .. controls +(.1,-.75) and +(-.1,-.75) .. (T11); 
\draw[orange] (T11) .. controls +(.1,-.75) and +(-.1,-.75) ..(T8);
 \draw[orange] (T8)--(B8); 
 \draw[orange] (B8) .. controls +(.1,.75) and +(-.1,.75) ..  (B11);
  \draw[orange] (B11) .. controls +(.1,.75) and +(-.1,.75) ..(B12); 
  \draw[orange] (B12)  .. controls +(.1,.75) and +(-.1,.75) .. (B14);  
\foreach \i in {1,...,14} 
{ \fill[black,draw=black]  (T\i) circle (4pt); \fill[black,draw=black]  (B\i) circle (4pt); }
\end{tikzpicture}\end{array} \\
d^T\ = & \begin{array}{c}
\begin{tikzpicture}[scale=.4,line width=1pt] 
\foreach \i in {1,...,14} 
{ \path (\i,1.25) coordinate (T\i); \path (\i,-1.25) coordinate (B\i); } 
\filldraw[fill= black!10,draw=black!10,line width=4pt]  (T1) -- (T14) -- (B14) -- (B1) -- (T1);
\draw[black] (B3)  .. controls +(.1,.75) and +(-.1,.75) ..  (B2); \draw[black](B2).. controls +(.1,.75) and +(-.1,.75) .. (B1);\draw[black] (B1) --(T1); \draw[black] (T1) .. controls +(.1,-.75) and +(-.1,-.75) .. (T2);
\draw[black] (B6)  .. controls +(.1,.5) and +(-.1,.5) ..  (B4);
\draw[black] (B4) .. controls +(0,1.3) and +(0,-1.3) ..  (T12);
\draw[black] (B14)  .. controls +(.1,.5) and +(-.1,.5) ..  (B12);\draw[black] (B12) .. controls +(.1,.75) and +(-.1,.75) .. (B11); \draw[black](B11).. controls +(.1,.75) and +(-.1,.75) ..(B8); \draw[black] (B8)--(T6);
\draw[black] (B7) --(T5);
\draw[black] (B10) .. controls +(0,1.3) and +(0,1.3) ..   (B13);
\draw[black] (B9)--(T3);
\draw[black] (T7) .. controls +(.1,-.75) and +(-.1,-.75) ..(T9);\draw[black] (T9) .. controls +(.1,-.75) and +(-.1,-.75) ..(T10); \draw[black](T10) .. controls +(.1,-.75) and +(-.1,-.75) ..(T11);
\foreach \i in {1,...,14} 
{ \fill[black,draw=black]  (T\i) circle (4pt); \fill[black,draw=black]  (B\i) circle (4pt); }
\end{tikzpicture}\end{array}
\end{array}.
$$
\end{examp}

\begin{examp} The signs in Example \ref{ConjugateExample} are (a) $x^2$, (b) $-x^3$, and (c) $x^4$, respectively.
\end{examp}

\begin{rem}  Observe the following subtlety in the definition of this action:  as a product in $\A_k$ we have  $d t d^T = x^{2 \kappa(d,t)} d \circ t \circ d^T$, since each block removed from the middle row in $d\circ t$ has a mirror image in $t \circ d^T$; however, we require  $d \cdot t = x^{\kappa(d,t)} S(d,t)\, d \circ t \circ d^T$ in order to make this an algebra action, as will be seen in the proof of Proposition \ref{prop:Module}.
\end{rem}

\begin{rem}  \label{SkRestriction}
When the action in \eqref{AnAction} is restricted to the symmetric group, we exactly get the action of $\S_k$ on involutions $\I_k$ defined in equation \eqref{SymmetricGroupSignedConjugation}
\end{rem}

\begin{prop}\label{prop:Module} The action defined in \eqref{AnAction} makes $\M_{\A_k}^{r,f}$ an $\A_k$-module.
\end{prop}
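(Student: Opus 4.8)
The plan is to verify directly that the formula in \eqref{AnAction} satisfies $d_1 \cdot (d_2 \cdot t) = (d_1 d_2) \cdot t$ for all basis diagrams $d_1, d_2 \in \AA_k$ and all $t \in \I_{\A_k}^{r,f}$, where $d_1 d_2 = x^{\kappa(d_1,d_2)} d_1 \circ d_2$ as in \eqref{PartitionAlgebraMultiplication}; linearity then extends this to all of $\A_k$. Since $\M_{\A_k}^{r,f}$ is spanned by the symmetric diagrams of rank $r$ with $f$ fixed blocks, and since conjugation $t \mapsto d \circ t \circ d^T$ always produces a symmetric diagram with rank $\le r$ (the zero case in \eqref{AnAction} handles the rank drop), the space is visibly closed under the action, so the only real content is associativity.

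First I would dispose of the rank-tracking part: by the earlier proposition on diagram conjugation, $\pn(d \circ t \circ d^T) = \pn(t)$ exactly when every propagating (equivalently, every fixed and every non-fixed propagating) block of $t$ meets a propagating block of $d$ on top and its mirror on the bottom. A short computation shows that if $\pn(d_2 \circ t \circ d_2^T) < \pn(t)$ then also $\pn((d_1 \circ d_2) \circ t \circ (d_1 \circ d_2)^T) < \pn(t)$, since $(d_1 \circ d_2)^T = d_2^T \circ d_1^T$ and rank is submultiplicative \eqref{def:propagating}; so both sides of the associativity identity are $0$ in that case. Thus I may assume rank is preserved throughout and work with honest diagram equalities $d \circ (d' \circ t \circ d'^T) \circ d^T = (d \circ d') \circ t \circ (d\circ d')^T$, which is just associativity of $\circ$ in $\PP_k$ together with $(d\circ d')^T = d'^T \circ d^T$.

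Next comes the bookkeeping of the scalar $x^{\kappa}$. Here is the subtlety flagged in the Remark: as an honest product $d t d^T = x^{2\kappa(d,t)}\, d\circ t\circ d^T$ because each middle-row loop in $dt$ is mirrored in $td^T$, whereas the action uses only $x^{\kappa(d,t)}$. I would show that when rank is preserved,
$$
\kappa(d_1, d_2\circ t\circ d_2^T) + \kappa(d_2, t) = \kappa(d_1\circ d_2, t),
$$
i.e. the exponents of $x$ accumulate additively exactly as needed. The point is that, under the conjugation bijection and with rank preserved, the connected components removed when stacking $d_1$ onto $d_2\circ t\circ d_2^T$ correspond bijectively (via the $i \leftrightarrow i'$ symmetry of $t$) to \emph{one} copy of each component one would see between $d_1$ and $d_2$ in the computation of $d_1\circ d_2$, the other copy being absorbed on the $d_2^T$–$d_1^T$ side; this is precisely why the factor is $\kappa$ and not $2\kappa$, and it is what forces the definition to be stated as it is.

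Finally I would handle the sign $S(d,t)$, which is the multiplicative part that makes this a genuine module rather than a projective one. Conjugation by $d_2$ (rank preserved) induces a permutation $\sigma_{d_2}$ of the $f$ fixed blocks of $t$, turning $t$ into $t' = d_2\circ t\circ d_2^T$ whose fixed blocks are the images; conjugation by $d_1$ then induces a permutation $\sigma_{d_1}$ of the fixed blocks of $t'$, and composing identifies the permutation induced by $d_1\circ d_2$ on the fixed blocks of $t$ with $\sigma_{d_1}\sigma_{d_2}$. Since $\sign$ is a homomorphism, $S(d_1, t')\,S(d_2,t) = S(d_1\circ d_2, t)$, and combined with the exponent identity above this gives $d_1\cdot(d_2\cdot t) = (d_1 d_2)\cdot t$. \textbf{The main obstacle} I anticipate is the middle step: making the correspondence between removed components precise enough to nail the $\kappa$ exponent, because one must carefully track how loops formed entirely in the glued middle rows interact with the reflected structure of $t$ — in particular ruling out that a component could be ``counted once'' on one side and ``twice'' on the other, which is exactly where the symmetry $t = t^T$ is used. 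The sign and associativity-of-$\circ$ parts are essentially formal once the rank-preservation hypothesis is in force, and the restriction-to-$\S_k$ check (Remark \ref{SkRestriction}) provides a useful sanity check, since there $\kappa \equiv 0$ and the statement reduces to the already-proved fact that $w \mapsto \sgn(w,t)$ behaves correctly under composition.
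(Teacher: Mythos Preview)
Your overall strategy matches the paper's, but the displayed $\kappa$-identity is wrong, and this is the heart of the proof. Expanding both sides carefully, with $s=d_2\circ t\circ d_2^T$,
\begin{align*}
(d_1 d_2)\cdot t &= x^{\kappa(d_1,d_2)}\,(d_1\circ d_2)\cdot t = x^{\kappa(d_1,d_2)+\kappa(d_1\circ d_2,\,t)}\,S(d_1\circ d_2,t)\,(d_1\circ d_2)\circ t\circ(d_1\circ d_2)^T,\\
d_1\cdot(d_2\cdot t) &= x^{\kappa(d_2,t)+\kappa(d_1,\,s)}\,S(d_1,s)\,S(d_2,t)\,(d_1\circ d_2)\circ t\circ(d_1\circ d_2)^T,
\end{align*}
so the exponent identity you actually need is
\[
\kappa(d_1,d_2)+\kappa(d_1\circ d_2,\,t)\;=\;\kappa(d_1,s)+\kappa(d_2,t),
\]
not the one you wrote, which drops the $\kappa(d_1,d_2)$ term coming from the algebra product $d_1 d_2$. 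A two-vertex Brauer example already kills your version: take $d_1=d_2=t=\e_2$; every $\kappa$ in sight equals $1$, and your displayed identity reads $1+1=1$.

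The paper obtains the correct identity by proving the two separate equalities $\kappa(d_1,d_2)=\kappa(d_1,s)$ and $\kappa(d_1\circ d_2,t)=\kappa(d_2,t)$, arguing from the diagram calculus that when rank is preserved the relevant interface structure does not change. Your verbal justification --- components removed when stacking $d_1$ on $s$ corresponding bijectively to components between $d_1$ and $d_2$ --- is in fact a sketch of the \emph{first} of these equalities, not of the identity you displayed. Once you fix the target identity, the remainder of your outline (the rank-drop case via submultiplicativity, associativity of $\circ$ together with $(d_1\circ d_2)^T=d_2^T\circ d_1^T$, and multiplicativity of the sign on fixed-block permutations) lines up with the paper's argument.
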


\begin{proof}  We  show that $(d_1 d_2) \cdot t = d_1 \cdot (d_2 \cdot t)$.  If $\pn(d \circ t \circ d^T) < \pn(t)$, then by the associativity of diagram multiplication we have $(d_1 d_2) \cdot t  = 0 =  d_1 \cdot (d_2 \cdot t)$. So we assume  that $\pn(d \circ t \circ d^T) = \pn(t)$.  Let $d_1 \circ  d_2 =  d_3$  and let $d_2 \circ t \circ d_2^T = s$ for some $s \in \I_{\A_k}^{r,f}$.  
Then we have,
\begin{align*}
d_1 \cdot (d_2 \cdot t) 
&=  x^{\kappa(d_2,t)} S(d_2,t) d_1 \cdot (d_2 \circ t \circ d_2^T) \\
& =  x^{\kappa(d_1,s)} x^{\kappa(d_2,t)} S(d_1,s) S(d_2,t)( d_1\circ (d_2\circ t \circ d_2^T) \circ d_1^T)  \\
& =  x^{\kappa(d_1,s)} x^{\kappa(d_2,t)} S(d_1,s) S(d_2,t)(( d_1 d_2) \circ t \circ (d_1 \circ d_2)^T)  \\
& =  x^{\kappa(d_1,s)} x^{\kappa(d_2,t)} S(d_1,s) S(d_2,t)(d_3 \circ t \circ d_3^T).
\end{align*}
On the other hand,
 $
(d_1  d_2) \cdot t = x^{\kappa(d_1, d_2)}  d_3 \cdot t = x^{\kappa(d_1, d_2)} x^{\kappa(d_3,t)}   S(d_3,t)  d_3 t d_3^T,
$
so it suffices to show that 
$$
S(d_3, t) = S(d_1, s) S(d_2,t) \qquad\hbox{and}\qquad  x^{\kappa(d_1, d_2)} x^{\kappa(d_3,t)}=x^{\kappa(d_1,s)} x^{\kappa(d_2,t)}.
$$
From the diagram calculus, we have that $\kappa(d_1,d_2) = \kappa(d_1, s)$ and $\kappa(d_3,t) = \kappa(d_2,t)$,  so the second equality follows immediately.  The first equality corresponds to the composition of permutations of  fixed blocks, and the result follows from the symmetric group fact that the sign of a permutation is multiplicative. 
\end{proof}

For $d \in \AA_k$, let $\tau(d)$ be the set partition of $\{1,\ldots, k\}$ given by restricting $d$ to $\{1, \ldots, k\}$ and let $\beta(d)$ be the set partition of $\{1',\ldots, k'\}$ given by restricting $d$ to $\{1', \ldots, k'\}$. Thus if $t$ is symmetric, then $i \leftrightarrow i'$ is a bijection between $\tau(t)$ and $\beta(t)$.
For each $t \in \I_{\A_k}^{r,f}$, let  $p_t \in \AA_k$ be the unique diagram such that
 \begin{enumerate}
 \item[(a)] $\tau(p_t) = \tau(t)$ and $\beta(p_t) = \beta(t)$
\item[(b)] A block of $t$ is propagating if and only if the corresponding block of $p_t$ is an identity block. 
\end{enumerate}
For example, 
$$\begin{array}{lcl} 
t &= & \begin{array}{c}
\begin{tikzpicture}[scale=.4,line width=1pt] 
\foreach \i in {1,...,16} 
{ \path (\i,1) coordinate (T\i); \path (\i,-1) coordinate (B\i); } 
\filldraw[fill= black!10,draw=black!10,line width=4pt]  (T1) -- (T16) -- (B16) -- (B1) -- (T1);
\draw[black] (T1) .. controls +(.1,-.65) and +(-.1,-.65) .. (T3);
\draw[black] (T3) .. controls +(.1,-.65) and +(-.1,-.65) .. (T4);
\draw[black] (T6) .. controls +(.1,-.65) and +(-.1,-.65) .. (T7);
\draw[black] (T7) .. controls +(.1,-.65) and +(-.1,-.65) .. (T8);
\draw[black] (B7) .. controls +(.1,.65) and +(-.1,.65) .. (B8);
\draw[black] (T9) .. controls +(.1,-.65) and +(-.1,-.65) .. (T11)  .. controls +(.1,-.65) and +(-.1,-.65) .. (T13);
\draw[black] (T10)  .. controls +(.1,-1) and +(-.1,-1) .. (T12);
\draw[black] (T14) .. controls +(.1,-.65) and +(-.1,-.65) .. (T16);
\draw[black] (T1) .. controls +(0,-1) and +(0,1) .. (B6);
\draw[black] (T6) .. controls +(0,-1) and +(0,1) .. (B1);
\draw[black] (T14)--(B14);
\draw[black] (T15)--(B15);
\draw[black] (B1) .. controls +(.1,.65) and +(-.1,.65) .. (B3);
\draw[black] (B3) .. controls +(.1,.65) and +(-.1,.65) .. (B4);
\draw[black] (B6) .. controls +(.1,.65) and +(-.1,.65) .. (B7);
\draw[black] (B14) .. controls +(.1,.65) and +(-.1,.65) .. (B16);
\draw[black] (B9) .. controls +(.1,.65) and +(-.1,.65) .. (B11)  .. controls +(.1,.65) and +(-.1,.65) .. (B13);
\draw[black] (B10)  .. controls +(.1,1) and +(-.1,1) .. (B12);
\foreach \i in {1,...,16} 
{ \fill (T\i) circle (4pt); \fill (B\i) circle (4pt); } 
\end{tikzpicture}\end{array}
\\
p_t &= & \begin{array}{c}
\begin{tikzpicture}[scale=.4,line width=1pt] 
\foreach \i in {1,...,16} 
{ \path (\i,1) coordinate (T\i); \path (\i,-1) coordinate (B\i); } 
\filldraw[fill= black!10,draw=black!10,line width=4pt]  (T1) -- (T16) -- (B16) -- (B1) -- (T1);
\draw[black] (T1) .. controls +(.1,-.65) and +(-.1,-.65) .. (T3);
\draw[black] (T3) .. controls +(.1,-.65) and +(-.1,-.65) .. (T4);
\draw[black] (T6) .. controls +(.1,-.65) and +(-.1,-.65) .. (T7);
\draw[black] (T7) .. controls +(.1,-.65) and +(-.1,-.65) .. (T8);
\draw[black] (B7) .. controls +(.1,.65) and +(-.1,.65) .. (B8);
\draw[black] (T9) .. controls +(.1,-.65) and +(-.1,-.65) .. (T11)  .. controls +(.1,-.65) and +(-.1,-.65) .. (T13);
\draw[black] (T10)  .. controls +(.1,-1) and +(-.1,-1) .. (T12);
\draw[black] (T14) .. controls +(.1,-.65) and +(-.1,-.65) .. (T16);
\draw[black] (T1) -- (B1);
\draw[black] (T6) -- (B6);
\draw[black] (T14)--(B14);
\draw[black] (T15)--(B15);
\draw[black] (B1) .. controls +(.1,.65) and +(-.1,.65) .. (B3);
\draw[black] (B3) .. controls +(.1,.65) and +(-.1,.65) .. (B4);
\draw[black] (B6) .. controls +(.1,.65) and +(-.1,.65) .. (B7);
\draw[black] (B14) .. controls +(.1,.65) and +(-.1,.65) .. (B16);
\draw[black] (B9) .. controls +(.1,.65) and +(-.1,.65) .. (B11)  .. controls +(.1,.65) and +(-.1,.65) .. (B13);
\draw[black] (B10)  .. controls +(.1,1) and +(-.1,1) .. (B12);
\foreach \i in {1,...,16} 
{ \fill (T\i) circle (4pt); \fill (B\i) circle (4pt); } 
\end{tikzpicture}
\end{array}
\end{array}
$$
Note that $\pn(p_t) = \pn(t)$. 
It follows  from this construction that
\begin{equation} \label{eq:absorption}
p_t t = t p_t = x^\ell t,
\end{equation}
where $\ell$ is the number of non-propagating blocks of $t$.  These diagrams are used in the proof of the following proposition.

\begin{prop} \label{nothingincommon}
If $r\not=s$,  there is no submodule of $\M_{\A_k}^{r,f}$ isomorphic to a submodule of $\M_{\A_k}^{s,g}$. 
\end{prop}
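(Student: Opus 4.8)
The plan is to isolate a two-sided ideal of $\A_k$ that kills one of the two modules while acting surjectively on the other. Assume without loss of generality that $r<s$, and let $\J_r\subseteq\A_k$ denote the $\CC$-span of the basis diagrams of rank at most $r$; by the rank inequality $\pn(d_1d_2)\le\min(\pn(d_1),\pn(d_2))$ this is a two-sided ideal.

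\emph{Step 1: $\J_r$ annihilates $\M_{\A_k}^{s,g}$.} If $d\in\AA_k$ has $\pn(d)\le r$ and $t\in\I_{\A_k}^{s,g}$, then $\pn(d\circ t\circ d^T)\le\pn(d)\le r<s=\pn(t)$, so $d\cdot t=0$ by the definition \eqref{AnAction} of signed conjugation. Extending linearly, $\J_r\cdot\M_{\A_k}^{s,g}=0$.

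\emph{Step 2: $\J_r\cdot\M_{\A_k}^{r,f}=\M_{\A_k}^{r,f}$.} Fix $t\in\I_{\A_k}^{r,f}$ and take the diagram $p_t\in\AA_k$ defined just before \eqref{eq:absorption}; it satisfies $\pn(p_t)=\pn(t)=r$, so $p_t\in\J_r$, and it is symmetric ($p_t^T=p_t$) because $t$ is. I claim $p_t\cdot t=x^{\ell}\,t$, where $\ell$ is the number of non-propagating blocks of $t$. Applying \eqref{eq:absorption} twice gives $p_t t p_t = x^{2\ell}t$ in $\A_k$; since $p_t^T=p_t$ the middle contractions occur symmetrically, so $p_t t p_t = x^{2\kappa(p_t,t)}(p_t\circ t\circ p_t^T)$, and comparing forces $\kappa(p_t,t)=\ell$ and $p_t\circ t\circ p_t^T=t$ exactly. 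In particular the rank is preserved, the fixed blocks pass through the identity blocks of $p_t$ in their original left-to-right order, so $S(p_t,t)=1$, and hence $p_t\cdot t=x^{\kappa(p_t,t)}S(p_t,t)\,(p_t\circ t\circ p_t^T)=x^{\ell}\,t$. Because $\A_k$ is semisimple we have $x\neq0$ (for each of these algebras $\e_k^2=x\e_k$ with $\e_k\neq0$, so $x=0$ would make $\e_k$ a nonzero nilpotent; in the rook cases $x=1$), hence $t=x^{-\ell}\,p_t\cdot t\in\J_r\cdot\M_{\A_k}^{r,f}$. As $t$ ranged over a spanning set, $\J_r\cdot\M_{\A_k}^{r,f}=\M_{\A_k}^{r,f}$.

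\emph{Step 3: conclusion.} Suppose $\phi\colon N\to N'$ is an isomorphism of $\A_k$-modules with $N\subseteq\M_{\A_k}^{r,f}$ and $N'\subseteq\M_{\A_k}^{s,g}$. For $a\in\J_r$ and $n\in N$ we get $\phi(a\cdot n)=a\cdot\phi(n)\in\J_r\cdot N'\subseteq\J_r\cdot\M_{\A_k}^{s,g}=0$ by Step 1, so $a\cdot n=0$; thus $\J_r\cdot N=0$. Since $\A_k$ is semisimple, $\M_{\A_k}^{r,f}$ is a semisimple module, so $N$ is a direct summand: $\M_{\A_k}^{r,f}=N\oplus P$. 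Then $\M_{\A_k}^{r,f}=\J_r\cdot\M_{\A_k}^{r,f}=\J_r\cdot N+\J_r\cdot P=\J_r\cdot P\subseteq P$ by Step 2, which forces $N=0$ and hence $N'=0$, completing the argument. The main obstacle is Step 2 — verifying $p_t\cdot t=x^{\ell}t$, where one must be careful about the exact power of $x$ in the action \eqref{AnAction} (the subtlety noted in the remark after Example \ref{SignedConjugation}) and must check that conjugation by $p_t$ neither lowers the rank nor permutes the fixed blocks; granting \eqref{eq:absorption}, this is a short diagram computation. Steps 1 and 3 are formal, using only the rank inequality, linearity, and semisimplicity.
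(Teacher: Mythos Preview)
Your proof is correct and hinges on the same device as the paper's: the diagram $p_t$ of rank $r$, which acts on $t\in\M_{\A_k}^{r,f}$ as multiplication by a nonzero scalar but annihilates $\M_{\A_k}^{s,g}$ since $r<s$. The paper packages this more directly as a Schur--lemma computation---any $\A_k$-homomorphism $\phi\colon\M_{\A_k}^{r,f}\to\M_{\A_k}^{s,g}$ satisfies $\phi(t)=x^{-\ell}\,p_t\cdot\phi(t)=0$ for every basis element $t$---rather than routing through the ideal $\J_r$ and a direct-summand decomposition, but the content is identical.

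One small quibble: the existence of a nonzero nilpotent \emph{element} does not by itself contradict semisimplicity (e.g.\ $E_{12}\in M_2(\CC)$), so your justification of $x\neq0$ is not airtight as stated; what matters is that $x=0$ lies outside the semisimple range for each of these algebras, and indeed the paper's own proof tacitly assumes $x\neq0$ when writing $x^{-\ell}$. Similarly, deducing $\kappa(p_t,t)=\ell$ from $x^{2\kappa}=x^{2\ell}$ is not valid for every nonzero $x$, but you only need $p_t\cdot t=x^{\kappa(p_t,t)}t\neq0$, which you do establish.
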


\begin{proof} By Schur's lemma,   $\M_{\A_k}^{r,f}$ and $\M_{\A_k}^{s,g}$ have an isomorphic submodule if and only if there is a nontrivial $\A_k$-module homomorphism $\phi:\M_{\A_k}^{r,f} \rightarrow \M_{\A_k}^{s,g}$.  Assume $r<s$, without loss of generality, and let $t\in \I_{\A_k}^{r,f}$.  Suppose  $\phi: \M_{\A_k}^{r,f} \to \M_{\A_k}^{s,g}$ is a nontrivial $\A_k$-module homomorphism. Then
by \eqref{eq:absorption} and the fact that $\phi$ is an $\A_k$-module homomorphism,
$\phi(t) = \phi\left(x^{-\ell} p_t t\right) = x^{-\ell} p_t \phi(t).$
Now, $\phi(t)$ is a linear combination of symmetric diagrams of rank $s$, but $\pn(p_t) = \pn(t) <s$, and thus by \eqref{AnAction}, $p_t$  acts as 0 on each diagram in the linear combination $\phi(t)$.  Thus $\phi(t) = 0$ for each $t$,  and there are no nontrivial $\A_k$-module homomorphisms. 
\end{proof}

Let  $\varphi_{\A_k}^{r,f}$ be the character of the $\A_k$-module $\M_{\A_k}^{r,f}$. Then $\varphi_{\A_k}^{r} = \sum_{f = 0}^r \varphi_{\A_k}^{r,f}$ is the character of  $\M_{\A_k}^{r}$ and  $\varphi_{\A_k} = \sum_{r = 0}^k \varphi_{\A_k}^{r}$ is the character of  $\M_{\A_k}.$
Let $\varphi_{\C_k}^f$ be the restriction of $\varphi_{\A_k}^{k,f}$ from $\A_k$ to $\C_k$.
Recall from \eqref{CharacterSufficiency} that it is sufficient to compute $\A_k$ characters on  $d \in \C_k$ or $d = a  \e_k$ with $a \in \A_{k'}$. 

\begin{prop} \label{ModelBlockDiagonal}
For each $d \in \AA_k$ and $0 \le f \le r \le k$, we have
$$
\varphi_{\A_k}^{r,f}(d) = 
\begin{cases}
\varphi_{\C_k}^f(d), & \text{if $r=k$ and $\pn(d)=k$,} \\
0, &\text{if $r=k$ and $\pn(d)<k$,} \\
\varphi_{\A_{k'}}^{r,f}(a), & \text{if $r<k$ and $d = a \e_k$ with $a \in \A_{k'}$.} \\
\end{cases}
$$
\end{prop}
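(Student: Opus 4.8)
The plan is to evaluate $\varphi^{r,f}_{\A_k}$ directly as the trace of signed conjugation \eqref{AnAction} on the diagram basis of $\M^{r,f}_{\A_k}$, so that for $d\in\AA_k$
\[
\varphi^{r,f}_{\A_k}(d)=\sum_{t\in\I^{r,f}_{\A_k}}\big\langle t,\,d\cdot t\big\rangle ,
\]
where $\langle t,v\rangle$ denotes the coefficient of the basis diagram $t$ in $v$. With this in hand the first two cases are immediate. If $r=k$ and $\pn(d)=k$, the asserted equality is just the definition of $\varphi^f_{\C_k}$ (the restriction of $\varphi^{k,f}_{\A_k}$ to $\C_k$), together with the fact that every rank-$k$ basis diagram lies in $\C_k$. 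If $r=k$ and $\pn(d)<k$, then for every $t\in\I^{k,f}_{\A_k}$ the rank inequality gives $\pn(d\circ t\circ d^T)\le\pn(d)<k=\pn(t)$, so $d\cdot t=0$ by \eqref{AnAction} and every summand vanishes.

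The work is in the third case, $d=a\e_k$ with $a\in\A_k'$. First I would pin down which $t$ can contribute. From the explicit form of $\e_k$ in \eqref{essential}, the diagram $\e_k\circ t\circ\e_k^T$ always carries the ``tail of $\e_k$'' on its last strand(s): vertices $k$ and $k'$ are singleton blocks in case (\ref{essential}.a), and $\{k-1,k\}$ and $\{(k-1)',k'\}$ are blocks in case (\ref{essential}.b). Since $a\in\A_k'$ acts only on strands $1,\dots,k-1$ (resp.\ $1,\dots,k-2$), left multiplication by $a$ leaves this tail intact, so $(a\e_k)\cdot t$ is always a scalar multiple of a single diagram bearing the tail. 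Hence $\langle t,(a\e_k)\cdot t\rangle\neq 0$ forces $t$ itself to be obtained from some diagram $\bar t\in\AA_k'$ by adjoining the tail of $\e_k$ on the remaining strand(s); and because the tail blocks never propagate and are never fixed, such a $t$ lies in $\I^{r,f}_{\A_k}$ precisely when $\bar t\in\I^{r,f}_{\A_k'}$ (in case (\ref{essential}.b) one also uses that $r<k$ forces $r\le k-2$ by the rank parity of Brauer/Temperley--Lieb diagrams).

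Next I would compute the action on such a $t$. A short diagram calculation shows $\e_k\circ t\circ\e_k^T=t$ with $\kappa(\e_k,t)=1$ and $S(\e_k,t)=1$, so that $\e_k\cdot t=x\,t$; likewise $a\cdot t$ is obtained from $\bar a\cdot\bar t$ (the action inside $\A_k'$, with $\bar a$ equal to $a$ viewed in $\A_k'$) by adjoining the same tail, since the tail strands contribute nothing to $\kappa$ or to the sign and the rank-preservation condition for $a$ on $t$ coincides with that for $\bar a$ on $\bar t$. Consequently $(a\e_k)\cdot t=x\cdot\bigl(\text{tail adjoined to }\bar a\cdot\bar t\bigr)$ and $\langle t,(a\e_k)\cdot t\rangle=x\,\langle\bar t,\bar a\cdot\bar t\rangle$. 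Summing over $\bar t\in\I^{r,f}_{\A_k'}$,
\[
\varphi^{r,f}_{\A_k}(a\e_k)=x\!\!\sum_{\bar t\in\I^{r,f}_{\A_k'}}\!\!\big\langle\bar t,\bar a\cdot\bar t\big\rangle=x\,\varphi^{r,f}_{\A_k'}(a),
\]
as claimed. Equivalently, the two displayed facts amount to an $\A_k'$-module isomorphism $\e_k\M^{r,f}_{\A_k}\xrightarrow{\ \cong\ }\M^{r,f}_{\A_k'}$, which together with the elementary identity $\Tr_{\M^{r,f}_{\A_k}}(a\e_k)=x\,\Tr_{\e_k\M^{r,f}_{\A_k}}(a)$ (valid because $a$ commutes with $\e_k$, $\e_k^2=x\e_k$, and $x\neq 0$ in the semisimple range) gives the same conclusion.

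I expect the only real obstacle to be this diagram bookkeeping: verifying uniformly, and separately for the two families of $\e_k$ in \eqref{essential}, that the support of $(a\e_k)\cdot t$ is confined to tail-bearing diagrams and that $\e_k\cdot t=x\,t$ while $a\cdot t$ restricts correctly, with all the $\kappa$-exponents, signs, and rank conditions lining up. None of this is deep, but it is where the argument has content and must be written out with care.
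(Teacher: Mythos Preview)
Your proposal is correct and follows essentially the same route as the paper: the first two cases are handled identically, and for $d=a\e_k$ both arguments identify the contributing $t$ as exactly those of the form $t'\e_k$ with $t'\in\I^{r,f}_{\A_k'}$, then match the diagonal entry for $t$ to $x$ times the diagonal entry for $t'$ by tracking $\kappa$ and the sign through the tail. Your factoring $(a\e_k)\cdot t=a\cdot(\e_k\cdot t)$ via the already-established module property, together with the alternative $\e_k\M^{r,f}_{\A_k}\cong\M^{r,f}_{\A_k'}$ formulation, is a slightly cleaner packaging of the same bookkeeping the paper does directly.
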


\begin{proof} If $r=k$ and  $\pn(d) < k$, then by  \eqref{AnAction} $d$ acts as 0 on every $t \in \I_{\A_k}^{k,f}$ and thus  $\varphi_{\A_k}^{k,f}(d) = 0$.
If $r = k$ and $\pn(d) = k$, then $d \in \C_k$.  The restriction to diagrams of rank $r=k$ is exactly the action of $\C_k$ on $\I_{\A_k}^{k,f} = \I_{\C_k}^f.$  When $\C_k = \CC \S_k$ this is the Saxl representation as observed in Remark  \ref{SkRestriction}.  In the planar case, $\C_k = \CC{\bf 1}_k$, the only planar rank $k$ diagram is ${\bf 1}_k$ and we must have $k = f$.

Let $r < k$ and $d = a \e_k = \e_k a$. Then  $t \in \I_{\A_k}^{r,f}$ contributes to the trace of $d$  only if $d \circ t \circ d^T = t$.  Furthermore,
$d \circ t \circ d^T = (\e_k a) \circ t \circ (\e_ka)^T = \e_k a \circ t \circ a^ T \e_k^T= \e_k a \circ t \circ a^ T \e_k = a' \e_k$ with $a' \in \AA_{k'}$. 
Thus $t$ contributes to the trace only if $t = t' \e_k$ for $t' \in \I_{\A_k'}^{r,f}$.  
Now, $d \cdot t = (a \e_k) \cdot (t' \e_k) 
= x^{\kappa(a \e_k, t' \e_k)} S(a \e_k, t' \e_k) (a  \e_k) \circ (t'  \e_k) \circ (a  \e_k)^T
= x^{\kappa(a \e_k, t' \e_k)} S(a \e_k, t' \e_k) (a \circ t' \circ a^T)  (\e_k \circ \e_k \circ \e_k)
= x^{\kappa(a \e_k, t' \e_k)} S(a \e_k, t' \e_k) (a \circ t' \circ a^T)  \e_k$.  Using the fact that both $a$ and $t'$ commute with $\e_k$, we see that $x^{\kappa(a \e_k, t' \e_k)} = x^{\kappa(a , t')}$ and $S(a \e_k, t' \e_k)  = S(a, t' )$.  Therefore, $t$ contributes to the trace if and only if $t = t' \e_k$ and, in this case, the $t$-$t$ entry of the action of $d$ on $\M_{\A_k}^{r,f}$ equals   the $t'$-$t'$ entry of the action of $a$ on $\M_{\A_{k'}}^{r,f}$.  Thus $\varphi_{\A_k}^{r,f}(d) = \varphi_{\A_{k'}}^{r,f}(a)$.
\end{proof}

When $\A_k$ is nonplanar, we have $\C_r = \CC \S_r$, and the Saxl model (Theorem \ref{thm:Model}) satisfies
\begin{equation}\label{CModelDecomp}
\M_{\C_r} = \bigoplus_{f = 0}^r \M_{\C_r}^f \cong \bigoplus_{f = 0}^r \bigoplus_{\lambda \in \Lambda_{\C_r}^f} \C_r^\lambda \cong \bigoplus_{\lambda \in \Lambda_{\C_r}} \C_r^\lambda, \qquad
\hbox{with $\Lambda_{\C_r} = \bigsqcup_{f=0}^r \Lambda_{\C_r}^f$},
\end{equation}
where $\Lambda_{\C_r}^f$ is the set of partitions of $r$ with $f$ odd parts.  Here  $\M_{\C_r}^f = 0$ and $\Lambda_{\C_r}^f = \emptyset$ if $r - 2f$ is not even.  When $\A_k$ is planar, we have $\C_r = \CC {\bf 1}_r$. In this case, the  model is trivial and satisfies \eqref{CModelDecomp} with $\M_{\C_r}^f = 0$, if $f \not= r$, and $\M_{\C_r}^r = \M_{\C_r}  = \CC {\bf 1}$.  We have $\Lambda_{\C_r}^f = \emptyset$, if $r \not = f$, and $\Lambda_{\C_r}^r = \Lambda_{\C_r}^r = \{(r)\}.$ 

By \eqref{inducctivereplabels}, the irreducible $\A_k$ modules are indexed by
\begin{equation}
\Lambda_{\A_k} = \bigsqcup_{r=0}^k \Lambda_{\C_r} = \bigsqcup_{r=0}^k \bigsqcup_{f=0}^r \Lambda_{\C_r}^f,
\end{equation}
Applying Proposition \ref{nothingincommon} and Proposition \ref{ModelBlockDiagonal} to Theorem \ref{JBCModelCharacters} gives the following theorem.

\begin{thm} \label{ModelRep} {\rm (Model Representation of $\A_k$)} For $k \ge 0$, let $\A_k$ be any  of the diagram algebras defined in  \ref{sec:DiagAlgs}, with $x$ chosen such that $\A_k$ is semisimple.  Let  $\{\A_k^\lambda \vert \lambda \in \Lambda_{\A_k} \}$  denote a complete set of irreducible $\A_k$-modules. Then
for  each $0 \le f \le r \le k$, we have 
\begin{enumerate}
\item[{\rm (a)}] $\displaystyle{\M_{\A_k}^{r,f} \cong \bigoplus_{\lambda \in \Lambda_{\C_r}^f} \A_k^\lambda}$ \qquad\hbox{and}\qquad
 $\displaystyle{\M_{\A_k}^r  = \bigoplus_{f=0}^r  \M_{\A_k}^{r,f} \cong \bigoplus_{\lambda \in \Lambda_{\C_r}} \A_k^\lambda}$,
\item[{\rm (c)}]  $\displaystyle{\M_{\A_k} = \bigoplus_{r=0}^k \bigoplus_{f=0}^r  \M_{\A_k}^{r,f} \cong \bigoplus_{\lambda \in \Lambda_{\A_k}} \A_k^\lambda,}$
\end{enumerate}
where $\M_{\A_k}^{r} =0$ and $\M_{\A_k}^{r,f} =0$ if there do not exist symmetric diagrams  in $\A_k$  of rank $r$ or of rank $r$ with $f$ fixed points.

\end{thm}

\begin{cor} \label{PlanarCorollary} If $\A_k$ is planar, then $\M_{\A_k}^r \cong \A_k^{(r)}$  is irreducible, and  thus $
\M_{\A_k} = \bigoplus_{r=0}^k \M_{\A_k}^{r} 
$
is a decomposition into irreducible $\A_k$-modules.
\end{cor}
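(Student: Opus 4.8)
The plan is to specialize Theorem~\ref{JBCModelRepresentations} to the planar case, in which $\C_r \cong \CC\mathbf{1}_r$ is the trivial one-dimensional algebra. The first step is the elementary observation that a one-dimensional algebra has exactly one irreducible module, so $\Lambda_{\C_r}$ is a singleton, say $\Lambda_{\C_r} = \{\lambda_r\}$. The second step is to determine which graded piece $\Lambda_{\C_r}^f$ contains $\lambda_r$: the only diagram of rank exactly $r$ in the planar algebra $\C_r$ is the identity $\mathbf{1}_r$, which is symmetric and has precisely $r$ fixed blocks, so the Gelfand model $\M_{\C_r}$ of $\C_r$ is concentrated in bidegree $(r,r)$. (This is exactly the fact recorded in the excerpt immediately preceding the corollary.) Hence $\Lambda_{\C_r}^r = \{\lambda_r\}$ while $\Lambda_{\C_r}^f = \emptyset$ for $0 \le f < r$.

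Plugging this into Theorem~\ref{JBCModelRepresentations} gives at once that $\M_{\A_k}^{r,f} = 0$ for every $f < r$, and that $\M_{\A_k}^{r,r} \cong \M_{\A_k}^{\lambda_r}$, an irreducible $\A_k$-module (read as the zero module when $\A_k$ contains no symmetric diagram of rank $r$ with $r$ fixed blocks). Summing over $f$ then collapses the sum: $\M_{\A_k}^r = \bigoplus_{f=0}^{r}\M_{\A_k}^{r,f} = \M_{\A_k}^{r,r}$, so $\M_{\A_k}^r$ is irreducible (or zero). This is the first assertion of the corollary.

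For the second assertion I would simply reassemble the pieces: $\M_{\A_k} = \bigoplus_{r=0}^{k}\M_{\A_k}^r$ by the definition of $\M_{\A_k}$ (equivalently, by the second isomorphism in Theorem~\ref{JBCModelRepresentations}), and each nonzero summand on the right is irreducible by the previous step, so this displays $\M_{\A_k}$ as a direct sum of irreducible $\A_k$-modules. If one also wants the multiplicity-one (Gelfand-model) statement made explicit, one notes via \eqref{IrreducibleLabels2} that the labels of the nonzero summands run over $\Lambda_{\A_k}$ without repetition, and that they are pairwise non-isomorphic by Proposition~\ref{nothingincommon}; but this is not required for the statement as phrased.

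I do not expect a genuine obstacle here, since all the inputs --- the identification $\C_r \cong \CC\mathbf{1}_r$, the concentration of the $\C_r$-model in bidegree $(r,r)$, and Theorem~\ref{JBCModelRepresentations} --- are already in place. The one point that deserves a little care is the bookkeeping of the vanishing cases: for some values of $r$ the module $\M_{\A_k}^r$ is genuinely $0$ (for instance whenever $k-r$ is odd in the Temperley-Lieb algebra), so the word ``irreducible'' in the statement must be understood as ``irreducible or zero,'' and the direct sum defining $\M_{\A_k}$ is effectively taken over only those $r$ for which $\I_{\A_k}^{r}\neq\emptyset$.
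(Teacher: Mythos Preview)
Your proposal is correct and follows exactly the paper's approach: the corollary is stated immediately after the sentence observing that in the planar case $\C_r \cong \CC\mathbf{1}_r$, so $\M_{\C_r}^{r,f}$ is irreducible and $1$-dimensional when $r=f$ and zero otherwise, and then Theorem~\ref{JBCModelRepresentations} gives the result. Your remark about the vanishing cases (e.g., $k-r$ odd for $\TL_k(x)$) is a welcome clarification that the paper leaves implicit.
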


\end{subsection}

\end{section}

\begin{section}{Gelfand Models for Diagram Algebras}
\label{ModelDiags}

In this section we illustrate the combinatorics of our  model representation for each diagram algebra. We classify and count the symmetric diagrams of each type according to their rank and number of fixed blocks.  These form a basis for the model representation defined in Theorem \ref{ModelRep}.  As above, we need $\CC$ to be chosen such that the diagram algebra is semisimple.  For example, we may choose $char(\CC) = 0$. In some cases, for example Rui's criterion \cite{Rui} on $char(\CC)$ for the semisimplicity of the Brauer algebra, it is known for which positive characteristics the algebra is semeisimple.

\begin{subsection}{The partition algebra $\P_k(x)$}

The partition algebra $\P_k(x)$ is spanned by the set partitions $\PP_k$ defined in  \ref{sec:PartMonoid} and has dimension equal to the Bell number $B(2k)$.  For $x \in \CC$ such that $x \not \in \{0,1,\ldots, 2k-1\}$, $\P_k(x)$ is semismple (see \cite{MS} or \cite{HR2}) with 
 irreducible modules   indexed by partitions in the set
\begin{equation}
\Lambda_{\P_k} = \{\ \lambda \vdash r\ \vert \ 0 \le r \le k\ \}.
\end{equation}

For each $0 \le \ell \le \lfloor r/2 \rfloor$ there exist symmetric partition algebra diagrams in $\I_{\P_k}^{r,f}$ of rank $r$ with $\ell$ blocks that are transposed (i.e., propagating, nonidentity blocks) and $f=r-2\ell$ fixed blocks. The number of these symmetric diagrams is
\begin{equation}\label{NumberofPartitionSymmetricDiagrams}
\dim  \M_{\P_k}^{r,r-2\ell} =\left\vert \I_{\P_k}^{r,r - 2 \ell} \right\vert = \sum_{b=r}^k \mathbf{S}(k,b)\binom{b}{r}\binom{r}{2\ell}(2\ell-1)!!,
\end{equation}
where $\mathbf{S}(k,b)$ is a Stirling number of the second kind. This sum is justified as follows:  first partition the top and bottom rows of a symmetric diagram identically into $b$ blocks in $\mathbf{S}(k,b)$ ways. Then choose $r$ of these blocks to be propagating, and from those $r$ blocks,  choose $2 \ell$ of them to correspond to transpositions  and  match them up in $(2\ell-1)!!$ ways.  The remaining $r - 2\ell$ blocks are fixed.

The model representation for $\P_k(x)$ satisfies,
\begin{equation}\label{PartitionModelDecomposition}
\M_{\P_k}^{r,f} = \bigoplus_{ \genfrac{}{}{0pt}{}{\lambda \vdash k}{\odd(\lambda)=f}}  \P_k^\lambda \qquad\hbox{and}\qquad
\M_{\P_k} = \bigoplus_{r=0}^k \bigoplus_{\ell=0}^{\lfloor r/2 \rfloor}\M_{\P_k}^{r,r-2\ell} =  \bigoplus_{\lambda \in \Lambda_{\P_k}}  \P_k^\lambda.
\end{equation}
If we let $\p_k = |\I_{\P_k}| = \sum_{r=0}^k \sum_{\ell=0}^{\lfloor r/2\rfloor} |\I_{\P_k}^{r,r-2 \ell}| = \dim \M_{\P_k}$ denote the total number of symmetric diagrams in $\P_k(x)$, then $\p_k$ is the sum of the degrees of the irreducible $\P_k(x)$-modules (which can be found in \cite{Ma}, \cite{HR2}, \cite{Ha1}).  The first few values of $\p_k$ are 1, 2, 7, 31, 164, 999,  6841, 51790, 428131.
The sequence $\mathsf{p}_k$ is \cite{OEIS}  \href{http://oeis.org/A002872}{A002872}, which equals the number of  type-$B$ set partitions (see Remark \ref{TypeB}), and  has exponential generating function
$
e^{(e^{2x}-3)/2 + e^x} = \sum_{k=0}^\infty \mathsf{p}_k \frac{x^k}{k!}.
$
This generating function is justified in \cite{Mo} in formula $6(5')$ (with $p = 2$ in the notation of \cite{Mo}) and in \cite{Qu} (with   $\p_k = H_{2,k}$  in the notation of \cite{Qu}).

\end{subsection}

\begin{subsection}{The Brauer algebra $\B_k(x)$}

The Brauer algebra $\B_k(x)$ is spanned by the Brauer diagrams and
has dimension $\dim \B_k(x) = (2k-1)!!$. For $x \in \CC$ such that  $x \not\in \{ \, x \in \ZZ \, \vert \, 4 - 2k \le x \le k -2 \}$, 
$\B_k(x)$ is semisimple (see \cite{Rui}) with irreducible modules  indexed by partitions in the set
\begin{equation}
\Lambda_{\B_k} = \{\ \lambda \vdash (k - 2 r)\ \vert\ 0 \le r \le \lfloor k/2 \rfloor\ \}.
\end{equation}

Symmetric Brauer diagrams consist of $\ell$ transpositions, $f$ fixed points, and $c$ contractions (symmetric pairs of horizontal edges) with and $f = k - 2c - 2 \ell$.  For example, the symmetric Brauer diagram,
$$
 \begin{array}{c}
\begin{tikzpicture}[scale=.4,line width=1pt] 
\foreach \i in {1,...,14} 
{ \path (\i,1) coordinate (T\i); \path (\i,-1) coordinate (B\i); } 
\filldraw[fill= black!10,draw=black!10,line width=4pt]  (T1) -- (T14) -- (B14) -- (B1) -- (T1);
\draw[black] (T1) -- (B3);
\draw[black] (T3) -- (B1);
\draw[black] (T2) -- (B5);
\draw[black] (B2)--(T5);
\draw[black] (T6)--(B9);
\draw[black] (T9)--(B6);
\draw[black] (T4)  .. controls +(.1,-.75) and +(-.1,-.75) ..  (T7);
\draw[black] (B4) .. controls +(.1,.75) and +(-.1,.75) .. (B7);
\draw[black] (T11)  .. controls +(.1,-.75) and +(-.1,-.75) ..  (T14);
\draw[black] (B11) .. controls +(.1,.75) and +(-.1,.75) .. (B14);
\draw[black] (T8)  .. controls +(.1,-.75) and +(-.1,-.75) ..  (T12);
\draw[black] (B8) .. controls +(.1,.75) and +(-.1,.75) .. (B12) ;
\draw[black] (T10) -- (B10);
\draw[black] (T13)--(B13);
\foreach \i in {1,...,14} 
{ \fill (T\i) circle (4pt); \fill (B\i) circle (4pt); }
\end{tikzpicture} 
\end{array} \in\B_{14}(x)
$$
has $\ell = 3$ transpositions $(1,3), (2,5),$ $(6,9)$, $c = 3$ contractions in positions $\{4,7\}, \{8,12\}, \{11,14\}$,  $f=2$ fixed points in positions 10 and 13, and rank $r = 8$.  Symmetric Brauer diagrams have rank $r = k -2c$, for $0 \le c \le \lfloor k/2 \rfloor$, and the number of symmetric diagrams of rank $r$ with with $f = r - 2 \ell$ fixed points equals
\begin{equation}\label{NumberOfBrauerSymmetricDiagrams}
\dim  \M_{\B_k}^{r,f} = \dim  \M_{\B_k}^{r,r-2\ell} =\left\vert \I_{\B_k}^{r,r-2\ell} \right\vert = \binom{k}{r}(k-r-1)!!\binom{r}{2\ell}(2\ell-1)!!.
\end{equation}
This count is justified as follows:  choose the $r$ positions for the propagating edges in $\binom{k}{r}$ ways and pair the remaining $k-r$ positions for contractions in $(k-r-1)!!$ ways. Among the propagating edges,  choose $r-2\ell$  fixed points and pair the remaining edges in transpositions in $(2\ell-1)!!$ ways.

The model representation for $\B_k(x)$  satisfies,
\begin{equation}\label{BrauerModelDecomposition}
\M_{\B_k}^{r,f} \cong \bigoplus_{ \genfrac{}{}{0pt}{}{\lambda \vdash r}{{\rm odd}(\lambda) = f}} \B_k^\lambda 
\qquad\hbox{ and } \qquad
\M_{\B_k} \cong \bigoplus_{c=0}^{\lfloor k/2 \rfloor} \bigoplus_{\ell = 0}^{\lfloor (k-2c)/2 \rfloor} \M_{\B_k}^{k-2c,k - 2c-2 \ell} \cong \bigoplus_{\lambda \in \Lambda_{\B_k}} \B_k^\lambda.
\end{equation}
If we let $\b_k = |\I_{\B_k}| = \sum_{c=0}^{\lfloor k/2 \rfloor} \sum_{\ell = 0}^{\lfloor (k-2c)/2 \rfloor} |\I_{\B_k}^{k-2c,k-2c-2\ell}| = \dim \M_{\B_k}$ denote the total number of symmetric diagrams in $\B_k(x)$, then $\b_k$ is the sum of the degrees of the irreducible $\B_k(x)$-modules (see \cite{Ra}).  This value can be obtained by summing  \eqref{NumberOfBrauerSymmetricDiagrams} over the given values of $c$ and $\ell$ or by summing over $m = c + \ell$  as follows,
\begin{equation}
\dim \M_{\B_k} = \sum_{m=0}^{\lfloor k/2 \rfloor} \binom{k}{2m}(2m-1)!! 2^m =\sum_{m=0}^{\lfloor k/2 \rfloor} \binom{k}{2m} \frac{(2m)!}{m!}
=\sum_{m=0}^{\lfloor k/2 \rfloor} \binom{k}{2m} \binom{2m}{m} m!.
\end{equation}
Here we choose $2m$ points to be the endpoints of the transpositions and contractions, we pair them up in $(2m-1)!!$ ways, and then we decide in $2^m$ ways if each is to be a transposition or a contraction.   The first few values of $\b_k$ are 1, 1, 3, 7, 25, 81, 331, 1303, 5937, which is
 \cite{OEIS}  \href{http://oeis.org/A047974}{A047974} with exponential generating function 
$
b(x) = e^{x^2+x} = \sum_{k=0}^\infty \mathsf{b}_k \frac{x^k}{k!}.
$
To justify this generating function,  verify that $\b_k$ satisfies the recurrence  $\b_{k+2} = \b_{k+1} + (2k+1) \b_k$ and therefore $b''(x) = (1 + 2x) b'(x) + 2 b(x)$ which has solution $b(x) = e^{x^2+x}$.

\end{subsection}

\begin{subsection}{The rook monoid algebra $\R_k$}

The  rook monoid algebra $\R_k$ is the subalgebra  spanned by rook monoid diagrams with parameter $x=1$.  It has dimension 
$\dim \R_k = \sum_{\ell = 0}^k \binom{k}{\ell}^2 \ell!$ and is semisimple (see \cite{So}, \cite{Ha2}, \cite{KM})  with irreducible modules labeled by
\begin{equation}
\Lambda_{\R_k} = \{\ \lambda \vdash r \ \vert\ 0 \le r \le  k \ \}.
\end{equation}

Symmetric  rook monoid diagrams in $\R_k$ consist of $f$ fixed points, $\ell$ transpositions, and $k - f - 2 \ell$ vertical pairs of empty vertices.  For example, the symmetric rook monoid diagram,
$$
\begin{array}{c} \begin{tikzpicture}[scale=.4,line width=1pt] 
\foreach \i in {1,...,14} 
{ \path (\i,1) coordinate (T\i); \path (\i,-1) coordinate (B\i); } 
\filldraw[fill= black!10,draw=black!10,line width=4pt]  (T1) -- (T14) -- (B14) -- (B1) -- (T1);
\draw[black] (T2) -- (B5);
\draw[black] (T5) -- (B2);
\draw[black] (T4) -- (B4);
\draw[black] (T6) -- (B6);
\draw[black] (T7) -- (B9);
\draw[black] (T9) -- (B7);
\draw[black] (T8) -- (B13);
\draw[black] (T13) -- (B8);
\draw[black] (T10)--(B10);
\draw[black] (T11) -- (B11);
\draw[black] (T14)--(B14);
\foreach \i in {1,...,14} 
{ \fill (T\i) circle (4pt); \fill (B\i) circle (4pt); }
\end{tikzpicture}\end{array} \in \R_{14}
$$
has $\ell = 3$ transpositions $(2,5), (7,9), (8,13)$,  $f = 5$ fixed points 4, 6, 10,11, 14,    empty vertices in positions 1, 3, 12, and rank $r = 11$.  Observe that $f= r- 2 \ell$ and that every pair  $0 \le f \le r \le k$, with $r - f$ even,  is possible. The number of symmetric rook diagrams of rank $r$ with $f = r-2 \ell$ fixed points is
\begin{equation}\label{NumberofRookMonoidSymmetricDiagrams}
\dim  \M_{\R_k}^{r,f} =\dim  \M_{\R_k}^{r,r-2\ell} =\left\vert \I_{\R_k}^{r,r-2\ell} \right\vert = \binom{k}{r}  \binom{r}{2\ell} (2\ell-1)!!.
\end{equation}
To justify this count,
choose the $r$ positions for propagating edges in $\binom{k}{r}$ ways, choose $r-2\ell$ positions for fixed points among these in $\binom{r}{2\ell}$ ways, and pair the remaining propagating edges into transpositions in $(2\ell-1)!!$ ways. 

The model representation for $\R_k$ satisfies,
\begin{equation}\label{RookMonoidModelDecomposition}
\M_{\R_k}^{r,f} \cong \bigoplus_{ \genfrac{}{}{0pt}{}{\lambda \vdash r}{\odd(\lambda)=f}}   \R_k^\lambda
\qquad\hbox{ and } \qquad
\M_{\R_k} \cong \bigoplus_{r=0}^{k} \bigoplus_{\ell=0}^{\lfloor r/2 \rfloor} \M_{\R_k}^{r,r-2 \ell} \cong \bigoplus_{\lambda \in \Lambda_{\R_k}} \R_k^{\lambda}.
\end{equation}
If we let $\r_k = |\I_{\R_k}| = \sum_{r=0}^{k} \sum_{\ell = 0}^{\lfloor r/2 \rfloor} |\I_{\R_k}^{r,r-2\ell}| = \dim \M_{\R_k}$ denote the total number of symmetric diagrams in $\R_k$, then $\r_k$ is the sum of the degrees of the irreducible $\R_k$-modules (which can be found in \cite{So}, \cite{Ha2}).  The first few values of these dimensions are 1, 2, 5, 14, 43, 142, 499, 1850, 7193.
The sequence $\mathsf{r}_k$ gives the number of ``self-inverse partial permutations" and is   \cite{OEIS}   \href{http://oeis.org/A005425}{A005425}.  Furthermore, $\r_k$ is related to the number of involutions $\s_k$ in the symmetric group (see \ref{sec:Saxl}) by the binomial transform $
\mathsf{r}_k = \sum_{i=0}^k \binom{k}{i} \mathsf{s}_i$ and thus (see \cite[(7.75)]{GKP})  has exponential generating function
$
e^x e^{x^2 / 2 + x} =  e^{x^2 / 2 + 2x}  = \sum_{k=0}^\infty \mathsf{r}_k \frac{x^k}{k!}.
$

\begin{rem} The model representation that we construct with our methods here differs from the model for the rook monoid given in \cite{KM} in the same way that the Saxl symmetric group model differs from the one used by Adin, Postnikov, and Roichman \cite{APR}. See  \ref{ModelComparisonSection}.
\end{rem}

\end{subsection}

\begin{subsection}{The rook-Brauer algebra $\RB_k(x)$}

The rook-Brauer algebra $\RB_k(x)$ is  spanned by  rook-Brauer diagrams and has dimension equal to $\sum_{\ell=0}^{k} \binom{2k}{2\ell}(2 \ell-1)!!$ (see \cite{dH} or \cite{MM}).  For all but finitely many  $x \in \CC$ (the exact values have not been determined), $\RB_k(x)$ is semisimple and its irreducible modules are labeled by
\begin{equation}
\Lambda_{\RB_k} = \{\ \lambda \vdash r \ \vert\ 0 \le r \le \lfloor k \rfloor\ \}.
\end{equation}

Symmetric rook-Brauer diagrams in $\RB_k(x)$ consist of $\ell$ transpositions, $f$ fixed points,  $c$ contractions, and $k - 2 \ell - 2 c - f$ vertical pairs of empty vertices.  For example, the symmetric rook-Brauer diagram,
$$
 \begin{array}{c}
\begin{tikzpicture}[scale=.4,line width=1pt] 
\foreach \i in {1,...,14} 
{ \path (\i,1) coordinate (T\i); \path (\i,-1) coordinate (B\i); } 
\filldraw[fill= black!10,draw=black!10,line width=4pt]  (T1) -- (T14) -- (B14) -- (B1) -- (T1);
\draw[black] (T1) -- (B3);
\draw[black] (T3) -- (B1);
\draw[black] (T2) -- (B5);
\draw[black] (B2)--(T5);           
\draw[black] (T4)  .. controls +(.1,-.75) and +(-.1,-.75) ..  (T7);
\draw[black] (B4) .. controls +(.1,.75) and +(-.1,.75) .. (B7);
\draw[black] (T11)  .. controls +(.1,-.75) and +(-.1,-.75) ..  (T14);
\draw[black] (B11) .. controls +(.1,.75) and +(-.1,.75) .. (B14);
\draw[black] (T8)  .. controls +(.1,-.75) and +(-.1,-.75) ..  (T12);
\draw[black] (B8) .. controls +(.1,.75) and +(-.1,.75) .. (B12) ;
\draw[black] (T10) -- (B10);
\draw[black] (T13)--(B13);
\foreach \i in {1,...,14} 
{ \fill (T\i) circle (4pt); \fill (B\i) circle (4pt); }
\end{tikzpicture} 
\end{array} \in \RB_{14}(x)
$$
has $\ell = 2$ transpositions $(1,3), (2,5),$ $c = 3$ contractions in positions $\{4,7\}, \{8,12\}, \{11,14\}$,  $f=2$ fixed points in positions 10 and 13,  empty vertices in positions 6 and 9, and rank $r =6$.
Observe that these diagrams satisfy $f = r-2 \ell$,  and that every  pair  $0 \le f \le r \le k$, with $r - f$ even,  is possible.  The number of symmetric diagrams of this type is
\begin{equation}\label{NumberOfRookBrauerSymmetricDiagrams}
\dim  \M_{\RB_k}^{r, f}=\dim  \M_{\RB_k}^{r, r-2\ell} = \left\vert \I_{\RB_k}^{r,r-2\ell} \right\vert = \sum_{c=0}^{\lfloor (k-r)/2 \rfloor} \binom{k}{r}\binom{k-r}{2c}(2c-1)!!\binom{r}{2\ell}(2\ell-1)!!,
\end{equation}
where here we sum over the number $c$ of contractions.  This count is justified as follows: in $\binom{k}{r}$ ways,  choose $r$ positions for the propagating edges. Then from the non propagating points,  select the $2c$ endpoints  for the contractions in $\binom{k-r}{2c}$ ways and match them up in $(2c-1)!!$ ways.  Then  choose the $2\ell$ endpoints of the transpositions in $\binom{r}{2\ell}$ ways, and match them up in $(2\ell-1)!!$ ways.

The model representation for $\RB_k(x)$ satisfies,
\begin{equation}\label{RookBrauerModelDecomposition}
\M_{\RB_k}^{r,f} \cong \bigoplus_{ \genfrac{}{}{0pt}{}{\lambda \vdash r}{\odd(\lambda)=f}}  \RB_k^\lambda
\qquad\hbox{ and } \qquad
\M_{\RB_k} \cong \bigoplus_{r=0}^{k} \bigoplus_{\ell=0}^{\lfloor r/2 \rfloor} \M_{\RB_k}^{r,r-2 \ell} \cong \bigoplus_{\lambda \in \Lambda_{\RB_k}} \RB_k^{\lambda}.
\end{equation}

If we let $\rb_k = |\I_{\RB_k}| = \sum_{r=0}^{k} \sum_{\ell = 0}^{\lfloor r/2 \rfloor} |\I_{\RB_k}^{r,r-2\ell}| = \dim \M_{\RB_k}$ denote the total number of symmetric diagrams in $\RB_k(x)$, then $\rb_k$ is the sum of the degrees of the irreducible $\RB_k(x)$-modules (these dimensions can be found in \cite{dH} or \cite{MM}).  
The first few values of $\rb_k$  are 1,  2, 6,  20,  76,  312, 1384, 6512, 32400.
The sequence $\rb_k$ is \cite{OEIS}  \href{http://oeis.org/A000898}{A000898} and it is related to 
the number of symmetric diagrams $\b_k$ in the Brauer algebra  by the binomial transform $
\mathsf{rb}_k = \sum_{i=0}^k \binom{k}{i} \b_i$ and thus (see \cite[(7.75)]{GKP})  has exponential generating function
$
e^x e^{x^2+x}=e^{x^2+2x} = \sum_{k=0}^\infty\mathsf{rb}_k \frac{x^k}{k!}.
$

\end{subsection}

\begin{subsection}{The Temperley-Lieb algebra $\TL_k(x)$}

The Temperley-Lieb algebra $\TL_k(x)$ is spanned by planar Brauer diagrams and has dimension equal to the Catalan number $C_{k} = \frac{1}{k+1}\binom{2k}{k}$.  For $x \in \CC$  that is not the root of $U_k(x/2)$, where $U_k$ is the Chebyshev polynomial of the second kind, $\TL_k(x)$ is  semisimple (see \cite{We} or  \cite{Jo1}) with irreducible modules  indexed by \begin{equation}
\Lambda_{\TL_k} = \{\ k - 2 \ell \ \vert\ 0 \le \ell \le \lfloor k/2 \rfloor \,\}.
\end{equation}

Symmetric Temperley-Lieb diagrams of rank $r$ have $f$ fixed points and $c$ contractions with $r = f = k - 2 c$. For example
$$
\begin{array}{c} \begin{tikzpicture}[scale=.4,line width=1pt] 
\foreach \i in {1,...,14} 
{ \path (\i,1) coordinate (T\i); \path (\i,-1) coordinate (B\i); } 
\filldraw[fill= black!10,draw=black!10,line width=4pt]  (T1) -- (T14) -- (B14) -- (B1) -- (T1);
\draw[black] (T1)  .. controls +(.1,-.5) and +(-.1,-.5) ..  (T2);
\draw[black] (T4)  .. controls +(.1,-1) and +(-.1,-1) ..  (T9);
\draw[black] (T5)  .. controls +(.1,-.5) and +(-.1,-.5) ..  (T6);
\draw[black] (T7)  .. controls +(.1,-.5) and +(-.1,-.5) ..  (T8);
\draw[black] (T12)  .. controls +(.1,-.75) and +(-.1,-.75) ..  (T13);
\draw[black] (T3) -- (B3);
\draw[black] (T10)--(B10);
\draw[black] (T11)--(B11);
\draw[black] (T14)--(B14);
\draw[black] (B1)  .. controls +(.1,.5) and +(-.1,.5) ..  (B2);
\draw[black] (B4)  .. controls +(.1,1) and +(-.1,1) ..  (B9);
\draw[black] (B5)  .. controls +(.1,.5) and +(-.1,.5) ..  (B6);
\draw[black] (B7)  .. controls +(.1,.5) and +(-.1,.5) ..  (B8);
\draw[black] (B12)  .. controls +(.1,.75) and +(-.1,.75) ..  (B13);
\foreach \i in {1,...,14} 
{ \fill (T\i) circle (4pt); \fill (B\i) circle (4pt); }
\end{tikzpicture}\end{array} \in \TL_{14}(x)
$$
has $c = 5$  contractions in positions $\{1,2\}, \{4,9\},$ $\{5,6\},$ $\{7,8\}, \{12,13\}$, 
$f = 4$ fixed points in positions 3, 10, 11, and 14, and rank $r= 4$.
The number of symmetric Temperley-Lieb diagrams (see  \cite[p.\ 545]{We} or \cite[Sec.\ 5.1]{Jo1}) is given by 
\begin{equation}\label{NumberOfTLSymmetricDiagrams}
\dim  \M_{\TL_k}^{r,f} =\left|\I_{\TL_k}^{k - 2 c}\right| = \tldim{k}{c} := \binom{k}{c} - \binom{k}{c-1}.
\end{equation}

The model representation for $\TL_k(x)$ satisfies,
\begin{equation}\label{TLModelDecomposition}
\M_{\TL_k}^{(k - 2 c)} \cong \TL_k^{(k - 2 c)}
\qquad\hbox{ and } \qquad
\M_{\TL_k} \cong \bigoplus_{c=0}^{\lfloor k/2 \rfloor} \M_{\TL_k}^{(k-2c)} \cong \bigoplus_{(k-2 c) \in \Lambda_{\TL_k}} \TL_k^{(k-2c)}.
\end{equation}
The total number of symmetric diagrams in $\TL_k(x)$ is
$$
\tl_k = \dim \M_{\TL_k} = \dim |\I_{\TL_k}| = \sum_{c = 0}^{\lfloor k/2 \rfloor}  |\I_{\TL_k}^{k-2c}|  = \sum_{c = 0}^{\lfloor k/2 \rfloor} \binom{k}{c} - \binom{k}{c-1} =\binom{k}{\lfloor k/2 \rfloor},
$$
which are the central binomial coefficients \cite{OEIS}  \href{http://oeis.org/A000984}{A000984}, and the first few values are 
1, 1, 2, 3, 6, 10, 20, 35, 70.
This sequence has exponential generating function
$
I_0(2x) + I_1(2x) = \sum_{k=0}^\infty\mathsf{tl}_k \frac{x^k}{k!},
$
where $I_n(z)$ is the modified Bessel function of the first kind (see for example \cite[(5.78)]{GKP}).

\begin{rem} The irreducible modules $\TL_k^{(k-2c)}$ are constructed in \cite{We} on ``cup diagrams" (or 1-factors). Cup diagrams correspond exactly to the upper half of a symmetric diagram (since the diagrams are symmetric, only half is needed), and the action of $\TL_k(x)$ on these diagrams is exactly the same as our conjugation action on symmetric diagrams.
\end{rem}

\end{subsection}

\begin{subsection}{The Motzkin algebra $\Motz_k(x)$}

The Motzkin algebra $\Motz_k(x)$ is  spanned by planar rook-Brauer diagrams, which correspond to partial planar matchings of $\{1,\ldots,k,1',\ldots,k'\}$, and so the dimension of $\Motz_k(x)$ is the Motzkin number $M_{2k}$ (see \cite{BH}).  For $x \in \CC$ that is not the root of $U_k((x-1)/2)$, where $U_k$ is the Chebyshev polynomial of the second kind,  $\Motz_k(x)$ is semisimple (see  \cite{BH}) and its irreducible modules are indexed by\begin{equation}
\Lambda_{\Motz_k} = \{ 0, 1, \ldots, k  \}.
\end{equation}

Symmetric Motzkin diagrams of  rank $r$ consist of $f=r$ fixed points,  $c$ contractions, and $k - f - 2c$ pairs of empty vertices.  For example, the symmetric Motzkin diagram,
$$
\begin{array}{c} \begin{tikzpicture}[scale=.4,line width=1pt] 
\foreach \i in {1,...,14} 
{ \path (\i,1) coordinate (T\i); \path (\i,-1) coordinate (B\i); } 
\filldraw[fill= black!10,draw=black!10,line width=4pt]  (T1) -- (T14) -- (B14) -- (B1) -- (T1);
\draw[black] (T1)  .. controls +(.1,-.5) and +(-.1,-.5) ..  (T2);
\draw[black] (T4)  .. controls +(.1,-1) and +(-.1,-1) ..  (T9);
\draw[black] (T6)  .. controls +(.1,-.5) and +(-.1,-.5) ..  (T8);
\draw[black] (T12)  .. controls +(.1,-.75) and +(-.1,-.75) ..  (T13);
\draw[black] (T3) -- (B3);
\draw[black] (T10)--(B10);
\draw[black] (T14)--(B14);
\draw[black] (B1)  .. controls +(.1,.5) and +(-.1,.5) ..  (B2);
\draw[black] (B4)  .. controls +(.1,1) and +(-.1,1) ..  (B9);
\draw[black] (B6)  .. controls +(.1,.5) and +(-.1,.5) ..  (B8);
\draw[black] (B12)  .. controls +(.1,.75) and +(-.1,.75) ..  (B13);
\foreach \i in {1,...,14} 
{ \fill (T\i) circle (4pt); \fill (B\i) circle (4pt); }
\end{tikzpicture}\end{array} \in \Motz_{14}(x)
$$
has $c = 4$ contractions in positions $\{1,2\}, \{4,9\}, \{6,8\},$ $\{12,13\}$, $f = 3$ fixed points in positions 3, 10,  14,  vertical pairs of empty vertices in positions 5, 7,  11, and rank $r=3$.
 Observe that every  rank $0 \le r \le k$ is possible.  The number of symmetric diagrams of this type is
\begin{equation}\label{NumberOfMotzkinSymmetricDiagrams}
\dim  \M_{\Motz_k}^{r} =\left\vert \I_{\Motz_k}^{r} \right\vert = \sum_{c = 0}^{\lfloor (k - r)/2 \rfloor} \binom{k}{r + 2 c} \tldim{r+2c}{c},
\end{equation}
where $\left\{{r+2c \atop c} \right\}$ is defined in \eqref{NumberOfTLSymmetricDiagrams}.
This formula is derived in \cite[(3.21)]{BH}.

The model representation for $\M_k(x)$ satisfies,
\begin{equation}\label{MotzkinModelDecomposition}
\M_{\Motz_k}^{r} \cong \Motz_k^{(r)}
\qquad\hbox{ and } \qquad
\M_{\Motz_k} \cong \bigoplus_{r=0}^{k} \M_{\Motz_k}^{r} \cong \bigoplus_{r \in \Lambda_{\Motz_k}} \Motz_k^{(r)}.
\end{equation}
If we let $\m_k = |\I_{\Motz_k}| = \sum_{r=0}^k |\I_{\Motz_k}^r| = \dim \M_{\Motz_k}$ denote the total number of symmetric diagrams in $\M_k(x)$, then $\m_k$ is the sum of the degrees of the irreducible $\Motz_k(x)$-modules.  The first few values of $\m_k$ are $1, 2, 5, 13, 35, 96,  267, 750, 2123, 6046, 17303$.
The sequence $\mathsf{m}_k$ is \cite{OEIS}  \href{http://oeis.org/A005773}{A005773} and it is related to 
the number of symmetric diagrams $\tl_k$ in the Temperley-Lieb algebra by the binomial transform $
\mathsf{m}_k = \sum_{i=0}^k \binom{k}{i} \tl_i$. Thus  (see \cite[(7.75)]{GKP}) $\m_k$ has exponential generating function 
$
e^x (I_0(2x) + I_1(2x)) = \sum_{k=0}^\infty\mathsf{m}_k \frac{x^k}{k!}.
$

\begin{rem} The irreducible modules $\Motz_k^{(r)}$ are constructed in \cite{BH} on 1-factors.  These 1-factors correspond exactly to the upper half of a symmetric Motzkin diagram, and the action of $\Motz_k(x)$ on these diagrams is exactly the same as our conjugation action on symmetric diagrams.  Indeed, it was knowledge of this conjugation action that allowed \cite{BH} to produce the action of $\Motz_k(x)$ on 1-factors.
\end{rem}

\end{subsection}

\begin{subsection}{The planar rook monoid algebra $\PR_k$}

The planar rook monoid algebra $\PR_k$ is spanned by planar rook-monoid diagrams with parameter set to $x=1$.  It has dimension $\binom{2k}{k}$, and is semisimple with irreducible modules labeled by
\begin{equation}
\Lambda_{\PR_k} = \left\{ 0, 1, \ldots, k \right\}.
\end{equation}

Symmetric planar rook monoid diagrams or rank $r$ consist of $f=r$ fixed points and $k -f$ vertical pairs of empty vertices. For example, the symmetric planar rook monoid diagram,
$$
\begin{array}{c} \begin{tikzpicture}[scale=.4,line width=1pt] 
\foreach \i in {1,...,14} 
{ \path (\i,1) coordinate (T\i); \path (\i,-1) coordinate (B\i); } 
\filldraw[fill= black!10,draw=black!10,line width=4pt]  (T1) -- (T14) -- (B14) -- (B1) -- (T1);
\draw[black] (T2) -- (B2);
\draw[black] (T3) -- (B3);
\draw[black] (T5) -- (B5);
\draw[black] (T8) -- (B8);
\draw[black] (T10)--(B10);
\draw[black] (T11) -- (B11);
\draw[black] (T14)--(B14);
\foreach \i in {1,...,14} 
{ \fill (T\i) circle (4pt); \fill (B\i) circle (4pt); }
\end{tikzpicture}\end{array} \in \PR_{14}
$$
has $f = 7$ fixed points in positions 2, 3, 5, 8, 10, 11,  14, and rank $r = 7$. We associate this diagram with its  fixed points $S = \{2,3,5,8,10,11,14\}$, and thus symmetric diagrams correspond exactly to subsets $S \subseteq \{1,2, \ldots, k\}$.  Thus, the number of symmetric diagrams is
\begin{equation}\label{NumberOfPlanarRookSymmetricDiagrams}
\dim  \M_{\PR_k}^{r,f} =\dim  \M_{\PR_k}^{r} =\left\vert \I_{\PR_k}^{r} \right\vert = \binom{k}{r}.
\end{equation}

The model representation for $\PR_k$ satisfies, 
\begin{equation}\label{PlanarRookModelDecomposition}
\M_{\PR_k}^{r} \cong \PR_k^{(r)}
\qquad\hbox{ and } \qquad
\M_{\PR_k} \cong \bigoplus_{r=0}^{k} \M_{\PR_k}^{r} \cong \bigoplus_{r \in \Lambda_{\PR_k}} \PR_k^{(r)}.
\end{equation}
If we let $\pr_k = |\I_{\PR_k}| = \sum_{r=0}^k   |\I_{\PR_k}^r| = \dim \M_{\PR_k}$ denote the total number of symmetric diagrams in $\PR_k$, then $\pr_k$ is the number of subsets of $\{1,2,\ldots,k\}$, so $\pr_k=\dim  \M_{\PR_k} =2^k$  with exponential generating function $e^{2x} = \sum_{k=0}^\infty2^k \frac{x^k}{k!}.$

\begin{rem} The irreducible modules $\PR_k^{(r)}$ are constructed in \cite{FHH} on a basis of $r$-subsets of $\{1,2, \ldots,k\}$.  These $r$-subsets correspond to symmetric rook monoid diagrams, and the action of $\PR_k$ on subsets is exactly the same as our conjugation action on symmetric diagrams.  Indeed, it was knowledge of this conjugation action that led \cite{FHH}  to produce the action of $\PR_k$ on subsets.
\end{rem}

\end{subsection}

\end{section}

\end{document}